\theoremstyle{thmstyleone}%
\newtheorem{theorem}{Theorem}
\newtheorem{lemma}{Lemma}
\newtheorem{proposition}{Proposition}%
\theoremstyle{thmstyletwo}%
\newtheorem{example}{Example}%
\newtheorem{remark}{Remark}%
\theoremstyle{thmstylethree}%
\begin{document}
	
	\title[Multi-window  Gabor Systems]{Discrete Zak Transform and  Multi-window  Gabor Systems on Discrete Periodic Sets}
	
	
	\author[]{\fnm{Najib} \sur{Khachiaa}}\email{khachiaa.najib@uit.ac.ma}

	\affil[]{\orgdiv{Laboratory Partial Differential Equations, Spectral Algebra and Geometry, Department of Mathematics}, \orgname{Faculty of Sciences, University Ibn Tofail}, \orgaddress{\city{Kenitra}, \country{Morocco}}}

	\abstract{	In this paper, $\mathcal{G}(g,L,M,N)$ denotes a $L-$window Gabor system on a periodic set $\mathbb{S}$, where $L,M,M\in \mathbb{N}$ and $g=\{g_l\}_{l\in \mathbb{N}_L}\subset \ell^2(\mathbb{S})$.  We characterize which $g$ generates  a complete multi-window  Gabor system and a multi-window Gabor frame $\mathcal{G}(g,L,M,N)$ on  $\mathbb{S}$ using the Zak transform.   Admissibility conditions for a periodic set to admit a  complete multi--window  Gabor system, multi-window  Gabor (Parseval) frame, and multi--window  Gabor (orthonormal) basis $\mathcal{G}(g,L,M,N)$ are given with respect to the parameters $L$, $M$ and $N$.}

	\keywords{Multi-window Discrete Gabor Frame, Discrete Periodic Set, Discrete Zak-transform.}

	\pacs[MSC Classification]{42C15; 42C40.}
	
	\maketitle

\section{Introduction and preliminaries}

When a signal appears periodically but intermittently, it can be considered within the entire space $\ell^2(\mathbb{Z})$ and analyzed in the standard manner. However, if the signal is only emitted for short periods, this method might not be the best approach. To perform Gabor analysis of the signal most efficiently while preserving all its features, Li and Lian studied single window Gabor systems on discrete periodic sets. They derived density results and frame characterizations. Compared to single window Gabor systems, multiwindow Gabor systems can be both interesting and beneficial, as they allow for more flexibility by using windows of different types and widths. For certain parameters $N$ and $M$, there does not exist an associated Gabor frame with a single window. However, allowing the use of multiple windows guarantees the existence of Gabor frames. For example, for $\mathbb{S}=\mathbb{Z}$, a Gabor frame with one window exists only if $N\leq M$. It has been shown in \cite{3} that when it is not the case,  by allowing the use of multiple windows, the existence of Gabor frames associated with $L$-windows is ensured, where $L$ is an integer satisfying $N\leq LM$ (which, of course, exists). Among the objectives of this paper is to extend this result to an arbitrary $N\mathbb{Z}$-periodic set $\mathbb{S}$ in $\mathbb{Z}$.

A sequence $\{f_i\}_{i\in \mathcal{I}}$, where $\mathcal{I}$ is a countable set, in a separable Hilbert space $H$ is said to be frame if there exist $0< A\leq B<\infty$ ( called frame bounds) such that for all $f\in H$, $$A\|f\|^2\leq \displaystyle{\sum_{i\in \mathcal{I}}\vert \langle f,f_i \rangle\vert^2}\leq B\|f\|^2.$$ If only the upper inequality holds,  $\{f_i\}_{i\in \mathcal{I}}$ is called  a Bessel sequence with Bessel bound $B$. If $A=B$, the sequence is called a tight frame and if $A=B=1$, it is called a Parseval frame for $H$. For more details on frame theory, the reader can refer to \cite{1}.

Denote by $\mathbb{N}$ the set of positive integers, i.e. $\mathbb{N}:=\{1,2,3,...\}$ and for a given $K\in \mathbb{N}$, write $\mathbb{N}_{K}:=\{0,1,...,K-1\}$. Let $N,M,L\in \mathbb{N}$ and $p,q\in \mathbb{N}$ such that pgcd$(p,q)=1$ and $\displaystyle{\frac{N}{M}=\frac{p}{q}}$. A nonempty subset $\mathbb{S}$ of $\mathbb{Z}$ is said to be $N\mathbb{Z}$-periodic set if for all $j\in \mathbb{S}$ and for all $n\in \mathbb{Z}$, $j+nN \in \mathbb{S}$. For $K\in \mathbb{N}$, write $\mathbb{S}_K:=\mathbb{S}\cap \mathbb{N}_K$. We denote by $\ell^2(\mathbb{S})$ the closed subspace of $\ell^2(\mathbb{Z})$ defined by, $$\ell^2(\mathbb{S}):=\{f\in \ell^2(\mathbb{Z}):\, f(j)=0 \text{ if } j\notin  \mathbb{S} \}.$$ 
Define the modulation operator $E_{\frac{m}{M}}$ with $m\in \mathbb{Z}$ and the translation operator $T_{nN}$ with $n\in \mathbb{Z}$ for $f\in \ell^2(\mathbb{S})$ by: $$E_{\frac{m}{M}}f(.):=e^{2\pi i \frac{m}{M}.} f(.), \;\; T_{nN}f(.):=f(.-nN).$$ 
The modulation and translation  operators are unitary operators of $\ell^2(\mathbb{S})$. For $g:=\{g_l\}_{l\in \mathbb{N}_L} \subset \ell^2(\mathbb{S})$, the associated multiwindow discrete Gabor system (M-D-G) is given by,
$$\mathcal{G}(g,L,M,N):=\{E_{\frac{m}{M}}T_{nN}g_l\}_{m\in \mathbb{N}_M,n\in \mathbb{Z}, l\in \mathbb{N}_L}.$$ 
For $j\in \mathbb{Z}$, we denote $\mathcal{K}_j=\{k\in \mathbb{N}_p:\, j+kM\in \mathbb{S}\}$ and  $\mathcal{K}(j):=diag(\chi_{\mathcal{K}_j}(0),\chi_{\mathcal{K}_j}(1),...,\chi_{\mathcal{K}_j}(p-1))$.

Let $K\in \mathbb{N}$.  The discrete Zak tansform $z_K$ of $f\in \ell^2(\mathbb{Z})$ for $j\in \mathbb{Z}$ and a.e $\theta\in \mathbb{R}$ is defined by, 	
$$z_Kf(j,\theta):=\displaystyle{\sum_{k\in \mathbb{Z}}f(j+kK)e^{2\pi i k \theta}}.$$
$z_Kf$ is quasi-periodic. i.e. $\forall j, k,l\in \mathbb{Z},\,\theta\in \mathbb{R}$ we have: $$z_Kf(j+kK,\theta+l)=e^{-2\pi ik\theta}z_Kf(j,\theta).$$
Then $z_K$ is, completely, defined by its values for $j\in \mathbb{N}_K$ and $\theta \in [0,1[$.

This paper is organized as follows. In section 2,  we will present some auxiliary lemmas to be used in the following sections. In section 3, we characterize which $g\in \ell^2(\mathbb{S})$ generates a complete multi-window discrete Gabor system and a multi-window discrete Gabor frame $\mathcal{G}(g,L,M,N)$ for $\ell^2(\mathbb{S})$ using the Zak transform. In section 4, we provide an admissibility characterization for complete multi-window discrete Gabor systems and multi-window discrete Gabor frames $\mathcal{G}(g,L,M,N)$ 
on a discrete periodic set $\mathbb{S}$, and we finish with an example.

\section{Auxiliary lemmas}
In this section, we present several lemmas and introduce the notations that will be utilized in the following sections. In addition to the notations introduced in the introduction, let $\mathcal{M}_{s,t}$ denote the set of all $s\times t$ matrices with entries in $\mathbb{C}$. We use $p\wedge q$ to indicate that $p$ and $q$ are coprime. For a given matrix $A$, $A^*$ is the conjugate transpose of $A$, $N(A)$ represents its kernel, and $A_{s,t}$ refers to its $(s,t)$-component. When $A$ is a column matrix, we denote its $r$-component simply by $A_r$. Following this, we provide several definitions and results that will be useful throughout the rest of the paper.
\begin{lemma}\cite{4}\label{lem1.1}
	Let $K\in \mathbb{N}$, and $\mathbb{S}$ be a $K\mathbb{Z}$-periodic set in $\mathbb{Z}$. Write $\mathbb{S}_K=\mathbb{S}\cap \mathbb{N}_K$. Then the restriction of $z_K$ to $\ell^2(\mathbb{S})$ is a unitary linear operator from $\ell^2(\mathbb{S})$ to the Hilbert space  $\ell^2(Q)$ where $Q=\mathbb{S}_K\times [0,1[$ and 
	$$\ell^2(Q):=\{\psi:Q \rightarrow \mathbb{C}:\, \displaystyle{\sum_{j\in \mathbb{S}_K}\int_{0}^1 \vert \psi(j,\theta)\vert^2 \, d\theta}< \infty\}.$$ 
\end{lemma}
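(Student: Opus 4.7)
The plan is to verify the three ingredients of a unitary operator in turn: well-definedness (range lands in $\ell^2(Q)$), isometry, and surjectivity. Linearity of $z_K$ is immediate from its series definition, so I would not spend space on it.

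First I would check that for $f\in\ell^2(\mathbb{S})$, $z_Kf$ is supported on $\mathbb{S}_K\times[0,1[$. By the $K\mathbb{Z}$-periodicity of $\mathbb{S}$, for any $j\in\mathbb{N}_K\setminus\mathbb{S}_K$ and any $k\in\mathbb{Z}$ one has $j+kK\notin\mathbb{S}$, hence $f(j+kK)=0$ and thus $z_Kf(j,\theta)=0$ for a.e.\ $\theta$. Combined with the quasi-periodicity $z_Kf(j+kK,\theta+l)=e^{-2\pi i k\theta}z_Kf(j,\theta)$ noted in the introduction, this shows $z_Kf$ is entirely determined by its values on $\mathbb{S}_K\times[0,1[$.

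Next I would establish the isometry. For fixed $j\in\mathbb{N}_K$, the series $z_Kf(j,\theta)=\sum_{k\in\mathbb{Z}}f(j+kK)e^{2\pi ik\theta}$ is the Fourier series on $[0,1[$ with coefficients $\{f(j+kK)\}_{k\in\mathbb{Z}}\in\ell^2(\mathbb{Z})$. Parseval's identity for the orthonormal basis $\{e^{2\pi ik\theta}\}_{k\in\mathbb{Z}}$ of $L^2([0,1[)$ gives
\[
\int_0^1 |z_Kf(j,\theta)|^2\,d\theta=\sum_{k\in\mathbb{Z}}|f(j+kK)|^2.
\]
Summing over $j\in\mathbb{S}_K$ and partitioning $\mathbb{S}$ as the disjoint union $\bigsqcup_{j\in\mathbb{S}_K}(j+K\mathbb{Z})$ (again using periodicity of $\mathbb{S}$) produces $\|z_Kf\|_{\ell^2(Q)}^2=\|f\|_{\ell^2(\mathbb{S})}^2$. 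This simultaneously shows $z_Kf\in\ell^2(Q)$ and that $z_K$ is an isometry.

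Finally, for surjectivity, given $\psi\in\ell^2(Q)$, extend $\psi(j,\cdot)$ by zero for $j\in\mathbb{N}_K\setminus\mathbb{S}_K$, and for each $j\in\mathbb{N}_K$ expand the restriction $\psi(j,\cdot)\in L^2([0,1[)$ in Fourier series, $\psi(j,\theta)=\sum_{k\in\mathbb{Z}}c_k(j)e^{2\pi ik\theta}$. Define $f$ on $\mathbb{Z}$ by $f(j+kK)=c_k(j)$ for $j\in\mathbb{N}_K$ and $k\in\mathbb{Z}$. By construction $f$ vanishes outside $\mathbb{S}$, and Parseval gives $\|f\|^2=\sum_{j\in\mathbb{S}_K}\sum_k|c_k(j)|^2=\|\psi\|_{\ell^2(Q)}^2<\infty$, so $f\in\ell^2(\mathbb{S})$ with $z_Kf=\psi$.

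The only genuine obstacle is bookkeeping: one must be careful that the $K\mathbb{Z}$-periodicity of $\mathbb{S}$ is used both to restrict the support to $\mathbb{S}_K$ (well-definedness) and to ensure that the preimage $f$ constructed from a Fourier expansion really lies in $\ell^2(\mathbb{S})$ rather than merely $\ell^2(\mathbb{Z})$. Everything else reduces to Parseval's theorem applied separately on each coset $j+K\mathbb{Z}$.
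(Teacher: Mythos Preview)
Your proof is correct and is the standard argument: reduce to Parseval's identity for the Fourier basis $\{e^{2\pi ik\theta}\}_{k\in\mathbb{Z}}$ of $L^2([0,1[)$ on each coset $j+K\mathbb{Z}$, with the $K\mathbb{Z}$-periodicity of $\mathbb{S}$ handling the support bookkeeping. Note, however, that the paper does not actually prove this lemma; it is quoted from reference~\cite{4} without proof, so there is no in-paper argument to compare against.
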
 

Let $A,B\subset \mathbb{Z}$ and $K\in \mathbb{N}$.  We say that $A$ is $K\mathbb{Z}$-congruent to $B$ if there exists a partition $\{A_k\}_{k\in \mathbb{Z} }$ of $A$ such that $\{A_k+kK\}_{k\in \mathbb{Z}}$ is a partition of $B$.
\begin{lemma}\cite{4}
	Let $N,M\in \mathbb{N}$ and $p,q\in \mathbb{N}$ such that $p\wedge q=1$ and $\displaystyle{\frac{N}{M}=\frac{p}{q}}.$ Then 
	the set \\$\Delta:=\{j+kM-rN:\; j\in \mathbb{N}_{\frac{M}{q}},\, k\in \mathbb{N}_p,\, r\in \mathbb{N}_q\}$ is $qN$-congruent to $\mathbb{N}_{pM}$.
\end{lemma}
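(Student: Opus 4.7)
The plan is to exploit the key numerical identity $qN = pM$ (which follows from $N/M = p/q$) and to reformulate the claim as a statement that reduction modulo $qN$ is a bijection from $\Delta$ onto $\mathbb{N}_{pM}$. Once that is established, defining, for each $k\in\mathbb{Z}$, the block
\[
A_k := \{x\in\Delta :\, -\lfloor x/(pM)\rfloor = k\}
\]
yields a partition $\{A_k\}_{k\in\mathbb{Z}}$ of $\Delta$ whose translates $\{A_k + kqN\}_{k\in\mathbb{Z}}$ are precisely the singletons $\{x \bmod pM : x\in\Delta\}$, and the bijectivity of reduction mod $qN=pM$ then delivers a partition of $\mathbb{N}_{pM}$. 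This is exactly the definition of $qN$-congruence, so the whole argument is reduced to proving the bijection.

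For the bijection, I would first normalize the parameters. From $qN = pM$ and $p\wedge q = 1$ one deduces $q\mid M$ and $p\mid N$, so setting $M' := M/q$ gives $M = qM'$ and $N = pM'$. A typical element of $\Delta$ rewrites as
\[
j + kM - rN \;=\; j + (kq - rp)\,M', \qquad j\in\mathbb{N}_{M'},\; k\in\mathbb{N}_p,\; r\in\mathbb{N}_q,
\]
and $\mathbb{N}_{pM} = \mathbb{N}_{pqM'}$ can be written uniquely as $\{j + tM' : j\in\mathbb{N}_{M'},\, t\in\mathbb{N}_{pq}\}$. Since $0\le j < M'$, reduction modulo $pqM'$ sends $j+(kq-rp)M'$ to $j + \big((kq-rp)\bmod pq\big)M'$, so the claim collapses to showing that the map
\[
\Phi:\mathbb{N}_p\times\mathbb{N}_q \longrightarrow \mathbb{N}_{pq}, \qquad (k,r)\longmapsto (kq-rp)\bmod pq,
\]
is a bijection.

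The main (and really only) nontrivial step is the bijectivity of $\Phi$, which I would settle by a Chinese Remainder Theorem argument. Since $p\wedge q=1$, the ring map $\mathbb{Z}/pq\to \mathbb{Z}/p\times\mathbb{Z}/q$ is an isomorphism; the image of $kq-rp$ under it is $\big(kq\bmod p,\, -rp\bmod q\big)$. Because $q$ is invertible modulo $p$ and $p$ is invertible modulo $q$, the assignment $k\mapsto kq\bmod p$ is a bijection $\mathbb{N}_p\to\mathbb{Z}/p$ and $r\mapsto -rp\bmod q$ is a bijection $\mathbb{N}_q\to\mathbb{Z}/q$. Combining these via CRT shows $\Phi$ is a bijection, and a cardinality count ($|\Delta| = M' \cdot p \cdot q = pM = |\mathbb{N}_{pM}|$) confirms there is no overcounting in $\Delta$ either. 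Assembling everything as in the first paragraph completes the proof. I do not foresee a serious obstacle beyond being careful with the indexing of the partition $\{A_k\}$.
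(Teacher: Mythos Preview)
Your proof is correct. Note, however, that the paper does not supply its own proof of this lemma: it is quoted verbatim from \cite{4} as an auxiliary fact, so there is nothing to compare your argument against within this paper. Your normalization $M'=M/q$ and the reduction to the bijectivity of $\Phi:(k,r)\mapsto (kq-rp)\bmod pq$ via the Chinese Remainder Theorem is clean and complete; the bijectivity of $\Phi$ is in fact essentially the content of Lemma~\ref{lem1.4} (also imported from \cite{4}), so your route is entirely in the spirit of the tools the paper already takes for granted.
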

For each $f\in \ell^2(\mathbb{Z})$, we associate a matrix-valued function $Z_f:\mathbb{Z}\times \mathbb{R}\rightarrow \mathcal{M}_{q,p}$ whose entry at the r-th row and the k-th column is defined by
$$Z_f(j,\theta)_{r,k}=z_{pM}f(j+kM-rN,\theta).$$
\begin{lemma}\cite{4}
	Let $N,M\in \mathbb{N}$ and $p,q\in \mathbb{N}$ such that $p\wedge q=1$ and $\displaystyle{\frac{N}{M}=\frac{p}{q}}.$ Then 
	$z_{pM}f$ is completeley determined by the matrices $Z_f(j,\theta)$ for $j\in \mathbb{N}_{\frac{M}{q}}$ and $\theta \in [0,1[$.\\
	Conversely, a matrix-valued function $Z:\mathbb{N}_{\frac{M}{q}}\times [0,1[\rightarrow \mathcal{M}_{q,p}$ such that for all $j\in \mathbb{N}_{\frac{M}{q}}$, $Z(j,.)_{r,k}\in L^2([0,1[)$ also determines a unique $f\in \ell^2(\mathbb{Z})$ such that for all $j\in \mathbb{N}_{\frac{M}{q}}$, $\theta \in [0,1[$, $Z_f(j,\theta)=Z(j,\theta).$
	
	For $g:=\{g_l\}_{l\in \mathbb{N}_L}\in \ell^2(\mathbb{Z})$, we associate the matrix-valued function\\ $Z_g:\mathbb{N}_{\frac{M}{q}}\times \mathbb{R}\rightarrow \mathcal{M}_{qL,p}$ defined for all $j\in \mathbb{N}_{\frac{M}{q}},\, \theta \in \mathbb{R}$ by the block matrix: $$Z_g(j,\theta)=\begin{pmatrix}
		Z_{g_0}(j,\theta)\\
		Z_{g_1}(j,\theta)\\
		\vdots\\
		Z_{g_{L-1}}(j,\theta)
	\end{pmatrix}.
	$$
\end{lemma}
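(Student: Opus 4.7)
The first (direct) part relies on two facts. Since $N/M=p/q$ with $p\wedge q=1$ forces $qN=pM$, Lemma~2 asserts that $\Delta$ is $pM\mathbb{Z}$-congruent to $\mathbb{N}_{pM}$; and the quasi-periodicity identity $z_{pM}f(j+s\,pM,\theta)=e^{-2\pi is\theta}z_{pM}f(j,\theta)$ shows that $z_{pM}f$ is fully determined by its restriction to any $pM\mathbb{Z}$-transversal in the first variable (and by $1$-periodicity in $\theta$). A cardinality check, $(M/q)\cdot p\cdot q=pM=|\mathbb{N}_{pM}|$, confirms that $(j,k,r)\mapsto j+kM-rN$ is a bijection from $\mathbb{N}_{M/q}\times\mathbb{N}_p\times\mathbb{N}_q$ onto $\Delta$, so the entries $Z_f(j,\theta)_{r,k}=z_{pM}f(j+kM-rN,\theta)$ for $j\in\mathbb{N}_{M/q}$, $\theta\in[0,1[$ carry exactly the information of $z_{pM}f|_{\Delta\times[0,1[}$, which via the $pM$-congruence and quasi-periodicity determines $z_{pM}f$ everywhere.

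For the converse, given $Z:\mathbb{N}_{M/q}\times[0,1[\to\mathcal{M}_{q,p}$ with entries in $L^2([0,1[)$, I would define
$$\psi(n,\theta):=e^{-2\pi is\theta}\,Z(j,\theta)_{r,k},\qquad (n,\theta)\in\mathbb{N}_{pM}\times[0,1[,$$
where $n=(j+kM-rN)+s\cdot pM$ is the unique decomposition with $j\in\mathbb{N}_{M/q}$, $k\in\mathbb{N}_p$, $r\in\mathbb{N}_q$, $s\in\mathbb{Z}$ supplied by the bijection and the $pM$-congruence above. Since $|e^{-2\pi is\theta}|=1$ and each $Z(\cdot,\cdot)_{r,k}$ lies in $L^2([0,1[)$, we have $\psi\in\ell^2(\mathbb{N}_{pM}\times[0,1[)$. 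Invoking Lemma~1 with $\mathbb{S}=\mathbb{Z}$ and $K=pM$, the map $z_{pM}:\ell^2(\mathbb{Z})\to\ell^2(\mathbb{N}_{pM}\times[0,1[)$ is a unitary isomorphism, so there is a unique $f\in\ell^2(\mathbb{Z})$ with $z_{pM}f=\psi$ on $\mathbb{N}_{pM}\times[0,1[$.

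It then remains to verify $Z_f=Z$ on $\mathbb{N}_{M/q}\times[0,1[$. For those $(j,k,r)$ with $j+kM-rN\in\mathbb{N}_{pM}$ this is immediate ($s=0$); for the remaining indices, one writes $n:=j+kM-rN+s\cdot pM\in\mathbb{N}_{pM}$, applies quasi-periodicity to get $z_{pM}f(j+kM-rN,\theta)=e^{2\pi is\theta}z_{pM}f(n,\theta)=e^{2\pi is\theta}\psi(n,\theta)=Z(j,\theta)_{r,k}$, and the two phase factors cancel. The main technical point is thus the consistent phase bookkeeping through the decomposition $n=j+kM-rN+s\,pM$; beyond Lemma~2 (for the partition) and Lemma~1 (for the identification of $\psi$ with some $z_{pM}f$), everything else is mechanical.
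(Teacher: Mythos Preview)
Your argument is correct. The paper does not actually prove this lemma; it is stated with a citation to \cite{4} and no proof is given in the text. Your route---using Lemma~2 (the $qN=pM$-congruence of $\Delta$ to $\mathbb{N}_{pM}$) together with quasi-periodicity for the direct part, and Lemma~1 (unitarity of $z_{pM}$ onto $\ell^2(\mathbb{N}_{pM}\times[0,1[)$) to invert for the converse---is exactly the natural one and uses precisely the auxiliary lemmas the paper has set up for this purpose. The injectivity of $(j,k,r)\mapsto j+kM-rN$ that you invoke via a cardinality count is also available directly from Lemma~\ref{lem2.5}, and your phase bookkeeping through the decomposition $n=(j+kM-rN)+s\,pM$ is correct.
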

\begin{lemma}\cite{4}\label{lem1.4}
	Let $p,q\in \mathbb{N}$ such that $p\wedge q=1$. Then for all $j\in \mathbb{Z}$, there exists a unique $(k_0,l_0)\in \mathbb{N}_p\times \mathbb{Z}$ and a unique $(k_0,m_0,r_0)\in \mathbb{N}_p\times \mathbb{Z}\times \mathbb{N}_q$ such that $j= k_0q+l_0p =k_0q+(m_0q+r_0)p.$
\end{lemma}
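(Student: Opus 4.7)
The plan is to reduce the statement to two classical facts: that $q$ is invertible modulo $p$ (since $p \wedge q = 1$), and Euclidean division by $q$.

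First I would handle the existence and uniqueness of $(k_0, l_0) \in \mathbb{N}_p \times \mathbb{Z}$. Given $j \in \mathbb{Z}$, any representation $j = k_0 q + l_0 p$ forces $k_0 q \equiv j \pmod{p}$. Because $p \wedge q = 1$, there is an inverse $q^{-1}$ of $q$ modulo $p$, so there is exactly one residue class $k_0 + p\mathbb{Z}$ satisfying this congruence, and hence exactly one representative $k_0 \in \mathbb{N}_p$. Once $k_0$ is fixed, the equation $j - k_0 q = l_0 p$ determines $l_0 \in \mathbb{Z}$ uniquely (the left-hand side is divisible by $p$ by construction). For existence, one can conversely take $k_0 \in \mathbb{N}_p$ to be the unique representative of $j q^{-1} \bmod p$ and set $l_0 := (j - k_0 q)/p$, which is an integer because $p \mid (j - k_0 q)$.

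Next I would unpack the second equality $l_0 p = (m_0 q + r_0) p$, i.e.\ write $l_0 = m_0 q + r_0$. This is just Euclidean division of the integer $l_0$ by $q$: there is a unique pair $(m_0, r_0) \in \mathbb{Z} \times \mathbb{N}_q$ with $l_0 = m_0 q + r_0$. Substituting back yields
\[
j = k_0 q + l_0 p = k_0 q + (m_0 q + r_0) p,
\]
with the same $k_0$ as in the first decomposition, proving existence; and uniqueness of $(m_0, r_0)$ is inherited from the uniqueness of Euclidean division, while uniqueness of $k_0$ was already established.

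I do not expect any serious obstacle here; the only subtle point is keeping track of which variables range over $\mathbb{N}_p$, $\mathbb{N}_q$, or $\mathbb{Z}$, and making sure the same $k_0$ is reused in both decompositions rather than reintroduced. The coprimality assumption is used exactly once, to invert $q$ modulo $p$; the rest is standard division with remainder.
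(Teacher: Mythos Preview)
Your argument is correct: invertibility of $q$ modulo $p$ gives the unique $k_0\in\mathbb{N}_p$, then $l_0$ is forced, and Euclidean division of $l_0$ by $q$ yields the unique $(m_0,r_0)$. The paper does not supply its own proof of this lemma --- it is quoted from \cite{4} --- so there is no in-paper argument to compare against; your proof is exactly the standard one.
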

\begin{lemma}\cite{4}\label{lem2.5}
	Let $M, N \in \mathbb{N}$ and $p, q \in \mathbb{N}$ such that  $\displaystyle{\frac{N}{M}=\frac{p}{q}}$ and $p\wedge q = 1$. Then, for
	all $m \in \mathbb{Z}$, there exists a unique $(j, r, k, \ell) \in \mathbb{N}_{\frac{M}{q}} \times \mathbb{N}_q \times \mathbb{N}_p \times \mathbb{Z}$ such that $m =
	j+kM -rN +\ell qN$.
\end{lemma}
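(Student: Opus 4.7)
The decisive observation is that the hypothesis $\frac{N}{M} = \frac{p}{q}$ with $p \wedge q = 1$ forces the identity $qN = pM$, so the term $\ell q N$ in the desired decomposition represents a shift by a multiple of the common quantity $qN = pM$. The plan is to deduce both existence and uniqueness from Lemma 2.2 (which identifies the "tile" $\Delta$) together with a cardinality/injectivity argument.

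For existence, I would invoke Lemma 2.2: the set
$$\Delta := \{j + kM - rN : j \in \mathbb{N}_{M/q},\, k \in \mathbb{N}_p,\, r \in \mathbb{N}_q\}$$
is $qN$-congruent to $\mathbb{N}_{pM}$. Since $\mathbb{N}_{pM} = \mathbb{N}_{qN}$ is a complete system of representatives modulo $qN$, the definition of $qN$-congruence (a partition of $\Delta$ whose translates by multiples of $qN$ partition $\mathbb{N}_{pM}$) transfers this property to $\Delta$. Consequently, for any $m \in \mathbb{Z}$ there exist $\delta \in \Delta$ and $\ell \in \mathbb{Z}$ with $m = \delta + \ell q N$, and by definition of $\Delta$ we can write $\delta = j + kM - rN$ for some $(j,k,r) \in \mathbb{N}_{M/q} \times \mathbb{N}_p \times \mathbb{N}_q$.

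For uniqueness, I would argue in two stages. First, the $qN$-congruence forces $|\Delta| = |\mathbb{N}_{pM}| = pM$, while the parametrization $\phi:(j,k,r)\mapsto j+kM-rN$ surjects a set of size $(M/q)\cdot p\cdot q = pM$ onto $\Delta$; hence $\phi$ is automatically a bijection. Second, the partition property in the definition of $qN$-congruence shows that no two distinct elements of $\Delta$ can differ by a multiple of $qN$. So if $m = j+kM-rN+\ell qN = j'+k'M-r'N+\ell' qN$ with both tuples admissible, setting $\delta = \phi(j,k,r)$ and $\delta' = \phi(j',k',r')$ gives $\delta - \delta' = (\ell'-\ell)qN \in qN\mathbb{Z}$, which forces $\delta = \delta'$ (hence $\ell = \ell'$); the injectivity of $\phi$ then yields $(j,k,r) = (j',k',r')$.

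The main subtlety I anticipate is making sure the invocation of Lemma 2.2 simultaneously delivers (a) the complete-residue-system property of $\Delta$ modulo $qN$ and (b) the injectivity of the parametrization $\phi$; both, however, fall out directly once one unpacks the definition of $qN$-congruence and checks the cardinalities. A purely arithmetic alternative, which could replace the appeal to Lemma 2.2, is to reduce $m$ modulo $d := M/q = N/p$ to pin down $j$, divide by $d$, and then apply Lemma \ref{lem1.4} to the quotient to recover $(k,r,\ell)$; but using Lemma 2.2 is more in keeping with the matrix-theoretic setup already established.
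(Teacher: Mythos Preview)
Your argument is correct. The paper does not supply a proof of this lemma at all; it simply cites it from \cite{4}, so there is no in-paper proof to compare against. Your route via Lemma~2.2 is sound: unpacking the definition of $qN\mathbb{Z}$-congruence shows both that $\Delta$ meets every residue class modulo $qN$ exactly once and that $|\Delta|=pM$, which together with the cardinality of the parameter set $\mathbb{N}_{M/q}\times\mathbb{N}_p\times\mathbb{N}_q$ forces the parametrization $\phi$ to be a bijection. The alternative you sketch at the end (reduce modulo $d=M/q$ and then invoke Lemma~\ref{lem1.4}) is closer in spirit to how such statements are usually proved directly in the source literature, but both approaches are valid and essentially equivalent in difficulty.
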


The following proposition characterizes which Multi-window Gabor frames are Multi-window Gabor Riesz beses using the parameters $L,M$ and $N$.
\begin{proposition}\cite{3} \label{KRK}\hspace{0.3cm}
	Let $g:=\{g_l\}_{l\in \mathbb{N}_L}\subset \ell^2(\mathbb{S})$.
	\begin{enumerate}
		\item  $\mathcal{G}(g,L,M,N)$ is a frame for $\ell^2(\mathbb{S})$ only when $\displaystyle{\frac{card(\mathbb{S}_N)}{M}}\leq L$.
		\item  Assume that $\mathcal{G}(g,L,M,N)$ is a frame for $\ell^2(\mathbb{S})$. Then following statements are equivalent: 
		\begin{enumerate}
			\item $\mathcal{G}(g,L,M,N)$ is a  Riesz basis  (exact frame) for $\ell^2(\mathbb{S})$.
			\item $\displaystyle{\frac{card(\mathbb{S}_N)}{M}}=L$.
		\end{enumerate}
	\end{enumerate}
\end{proposition}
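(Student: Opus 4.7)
The plan is to reduce both assertions to rank/dimension conditions on the block matrices $Z_g(j,\theta)$ by invoking the Zak-transform characterizations of the frame and Riesz-basis properties (which will be established in Section 3 of the paper). Since $f\in\ell^2(\mathbb{S})$ translates, through the Zak picture, into the vanishing of the $k$-th column of $Z_f(j,\theta)$ whenever $j+kM\notin\mathbb{S}$ (equivalently, $Z_f(j,\theta)=Z_f(j,\theta)\mathcal{K}(j)$), the two-sided frame inequality for $\mathcal{G}(g,L,M,N)$ on $\ell^2(\mathbb{S})$ is equivalent to a pointwise matrix inequality of the form
\begin{equation*}
A\,\mathcal{K}(j)\preceq \mathcal{K}(j)\,Z_g(j,\theta)^{*}Z_g(j,\theta)\,\mathcal{K}(j)\preceq B\,\mathcal{K}(j)
\end{equation*}
for a.e.\ $(j,\theta)\in\mathbb{N}_{M/q}\times[0,1[$. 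The lower bound forces the $qL\times p$ matrix $Z_g(j,\theta)\mathcal{K}(j)$ to have rank exactly $|\mathcal{K}_j|$, whence $|\mathcal{K}_j|\le qL$ for a.e.\ $j\in\mathbb{N}_{M/q}$.

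For part (1), the remaining task is the combinatorial identity
\begin{equation*}
\sum_{j\in\mathbb{N}_{M/q}}|\mathcal{K}_j|=card(\mathbb{S}_N).
\end{equation*}
I would establish it by noting that $p\wedge q=1$ and $qN=pM$ give $p\mid N$, so the set $E:=\{j+kM:j\in\mathbb{N}_{M/q},\,k\in\mathbb{N}_p\}$ has cardinality $pM/q=N$. A short coprimality argument (or a direct application of Lemma \ref{lem2.5} with $r=\ell=0$, or Lemma \ref{lem1.4}) shows that the $N$ elements of $E$ are pairwise distinct modulo $N$, so $E$ is $N\mathbb{Z}$-congruent to $\mathbb{N}_N$. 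The $N\mathbb{Z}$-periodicity of $\mathbb{S}$ then yields $|\mathbb{S}\cap E|=card(\mathbb{S}_N)$, which is exactly the asserted identity. Summing $|\mathcal{K}_j|\le qL$ over $j\in\mathbb{N}_{M/q}$ gives $card(\mathbb{S}_N)\le(M/q)\cdot qL=LM$, i.e.\ $card(\mathbb{S}_N)/M\le L$.

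For part (2), a frame is a Riesz basis precisely when its synthesis operator is, in addition, injective with a stability constant. Passing this through the Zak isometry upgrades the earlier condition to: $Z_g(j,\theta)\mathcal{K}(j)$, viewed as a map $\mathcal{K}(j)\mathbb{C}^p\to\mathbb{C}^{qL}$, is bijective for a.e.\ $(j,\theta)$, which forces $|\mathcal{K}_j|=qL$. Summing and applying the combinatorial identity above gives $card(\mathbb{S}_N)=LM$, proving $(a)\Rightarrow(b)$. Conversely, assuming $(b)$ together with the frame property, the bound $|\mathcal{K}_j|\le qL$ established in part (1), coupled with $\sum_{j\in\mathbb{N}_{M/q}}|\mathcal{K}_j|=card(\mathbb{S}_N)=LM=(M/q)\cdot qL$, must hold with equality term by term, so $|\mathcal{K}_j|=qL$ for every $j$; consequently $Z_g(j,\theta)\mathcal{K}(j)$ is a square invertible map between spaces of equal dimension $qL$, which reverses through the Zak isometry to the Riesz-basis property.

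The main obstacle is the careful translation between the abstract frame/Riesz definitions and the pointwise matrix statements in the presence of the projector $\mathcal{K}(j)$ (the effectively active matrix is $qL\times|\mathcal{K}_j|$, not $qL\times p$); once that dictionary and the congruence $E\sim \mathbb{N}_N\pmod{N}$ are established, the rest is elementary dimension counting.
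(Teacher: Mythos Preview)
The paper does not prove this proposition at all; it is quoted from reference~\cite{3} and used later (in the proof of Theorem~\ref{prop4.8}) as an external input. So there is no in-paper argument to compare against, and your proposal is in effect an alternative proof built on the paper's own Section~3 machinery. There is no circularity in doing so, since Theorems~\ref{prop3.5} and~\ref{prop3.6} are proved without invoking Proposition~\ref{KRK}.

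Your argument for Part~(1) is sound: the frame characterization of Theorem~\ref{prop3.6} forces $\mathrm{rank}\,Z_g(j,\theta)=|\mathcal{K}_j|$, hence $|\mathcal{K}_j|\le qL$, and the identity $\sum_{j\in\mathbb{N}_{M/q}}|\mathcal{K}_j|=card(\mathbb{S}_N)$ (which is exactly Remark~\ref{rem1} and the computation in Lemma~\ref{lem4.1}) finishes it.

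For Part~(2), however, there is a genuine gap. You write that the Riesz-basis property, ``passed through the Zak isometry,'' forces $Z_g(j,\theta)\mathcal{K}(j)$ to be a bijection $\mathcal{K}(j)\mathbb{C}^p\to\mathbb{C}^{qL}$, and conversely that bijectivity ``reverses through the Zak isometry to the Riesz-basis property.'' But Section~3 of the paper establishes Zak-side characterizations only of \emph{completeness} (Theorem~\ref{prop3.5}) and of the \emph{frame inequality} (Theorem~\ref{prop3.6}); it never characterizes when $\mathcal{G}(g,L,M,N)$ is a Riesz sequence, i.e.\ when the synthesis operator is bounded below. The computation in Lemma~\ref{lem3.7} expresses $\sum|\langle f,E_{m/M}T_{nN}g_l\rangle|^2$ in terms of $\|Z_{g_l}(j,\theta)F(j,\theta)\|^2$ --- this is the analysis side. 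To run your argument you would need the dual identity expressing $\bigl\|\sum c_{m,n,l}E_{m/M}T_{nN}g_l\bigr\|^2$ in Zak coordinates, which would lead to a condition of the type $Z_g(j,\theta)Z_g(j,\theta)^*\ge cI_{qL}$ and hence $\mathrm{rank}\,Z_g(j,\theta)=qL$. That computation is of the same flavor as Lemma~\ref{lem3.7} but is not carried out anywhere in the paper, and you have not supplied it. Without it, both directions of your Part~(2) rest on an unproved dictionary entry; the phrase ``passing this through the Zak isometry'' is precisely where the missing lemma hides.
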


\begin{lemma}\cite{4}\label{lem2.6}
	Let $M \in \mathbb{N}$ and $E \subset \mathbb{Z}$. Then the following conditions are equivalent:
	\begin{enumerate}
		\item $\left\{ e^{2\pi im/M} \cdot \chi_E(\cdot) : m \in \mathbb{N}_M \right\}$ is a tight frame for $\ell^2(E)$ with frame bound $M$.
		\item $\left\{ e^{2\pi im/M} \cdot \chi_E(\cdot) : m \in \mathbb{N}_M \right\}$ is complete in $\ell^2(E)$.
		\item $E$ is $M\mathbb{Z}$-congruent to a subset of $\mathbb{N}_M$.
		\item $\displaystyle{\sum_{k \in \mathbb{Z}} \chi_E(\cdot + kM) \leq 1}$ on $\mathbb{Z}$.
	\end{enumerate}
\end{lemma}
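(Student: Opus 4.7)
The plan is to prove the cyclic chain $(1)\Rightarrow(2)\Rightarrow(4)\Rightarrow(3)\Rightarrow(1)$, so that all four statements become equivalent. The implication $(1)\Rightarrow(2)$ is immediate, since every tight frame is in particular complete.

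For $(2)\Rightarrow(4)$, I would argue by contrapositive. If condition $(4)$ fails, then some residue class modulo $M$ contains at least two distinct points $j+k_1M$ and $j+k_2M$ of $E$, with $k_1\neq k_2$ in $\mathbb{Z}$. Consider the nonzero element $f=\chi_{\{j+k_1M\}}-\chi_{\{j+k_2M\}}\in\ell^2(E)$. For every $m\in\mathbb{N}_M$,
\[
\langle f,e^{2\pi i m\cdot/M}\chi_E\rangle=e^{-2\pi imj/M}\bigl(e^{-2\pi imk_1}-e^{-2\pi imk_2}\bigr)=0,
\]
since $k_1,k_2\in\mathbb{Z}$. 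So $f$ is orthogonal to every generator, contradicting completeness.

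For $(4)\Rightarrow(3)$, given that each residue class modulo $M$ meets $E$ in at most one element, I would define, for each $k\in\mathbb{Z}$, the piece $A_k:=\{j\in E:\,j+kM\in\mathbb{N}_M\}$. Then $\{A_k\}_{k\in\mathbb{Z}}$ partitions $E$ (because every integer has a unique representative in $\mathbb{N}_M$ modulo $M$), and the translates $\{A_k+kM\}_{k\in\mathbb{Z}}$ are pairwise disjoint subsets of $\mathbb{N}_M$: if $j+kM=j'+k'M$ with $j\in A_k$, $j'\in A_{k'}$, then $j\equiv j'\pmod M$ forces $j=j'$ by $(4)$ and hence $k=k'$. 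Thus $E$ is $M\mathbb{Z}$-congruent to $F:=\bigsqcup_k(A_k+kM)\subset\mathbb{N}_M$.

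The main work is $(3)\Rightarrow(1)$. Define the map $U:\ell^2(E)\to\ell^2(F)$ by $(Uf)(j+kM)=f(j)$ for $j\in A_k$; this is a bijective isometry, and since $e^{2\pi im(j+kM)/M}=e^{2\pi imj/M}$ for $k\in\mathbb{Z}$, one checks $U$ intertwines the modulation systems, i.e.\ $U(e^{2\pi im\cdot/M}\chi_E)=e^{2\pi im\cdot/M}\chi_F$. So it is enough to show that $\{e^{2\pi im\cdot/M}\chi_F:m\in\mathbb{N}_M\}$ is a tight frame with bound $M$ in $\ell^2(F)$. Extending any $h\in\ell^2(F)$ by zero to a function on $\mathbb{N}_M\cong\mathbb{Z}/M\mathbb{Z}$, this reduces to the standard Plancherel identity for the DFT on $\mathbb{Z}/M\mathbb{Z}$:
\[
\sum_{m=0}^{M-1}\Bigl|\sum_{j\in F}h(j)e^{-2\pi imj/M}\Bigr|^2=M\sum_{j\in F}|h(j)|^2,
\]
which is exactly the tight frame identity with bound $M$. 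The only subtle point is verifying that $U$ is well-defined and unitary, which relies on $\{A_k\}$ being a partition of $E$ and $\{A_k+kM\}$ being a partition of $F$; both follow from condition $(3)$.
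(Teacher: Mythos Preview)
Your proof is correct. Note, however, that the paper does not actually prove this lemma: it is quoted from reference~\cite{4} (Li and Lian), as indicated by the citation attached to the lemma environment, so there is no in-paper proof to compare against. Your cyclic argument $(1)\Rightarrow(2)\Rightarrow(4)\Rightarrow(3)\Rightarrow(1)$ is clean and self-contained; the explicit orthogonal vector in $(2)\Rightarrow(4)$ and the unitary transfer to $\ell^2(F)\subset\ell^2(\mathbb{N}_M)$ followed by the DFT Plancherel identity in $(3)\Rightarrow(1)$ are the natural moves, and all details check out.
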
 
\begin{lemma}\cite{1}\label{PRO}
	Let $\{f_i\}_{i\in \mathcal{I}}$, where $\mathcal{I}$ is a countable sequence,  be a Parseval frame for a separable Hilbert space $H$. Then the following statements are equivalent:
	\begin{enumerate}
		\item $\{f_i\}_{i\in \mathcal{I}}$ is a Riesz basis.
		\item $\{f_i\}_{i\in \mathcal{I}}$ is an orthonormal basis.
		\item For all $i\in \mathcal{I}$, $\|f_i\|=1.$
	\end{enumerate}
\end{lemma}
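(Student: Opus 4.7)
The plan is to establish the equivalence via the cycle $(3)\Rightarrow(2)\Rightarrow(1)\Rightarrow(3)$, exploiting at each step the fact that the Parseval property tightly constrains the geometry of $\{f_i\}_{i\in\mathcal{I}}$.

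For $(3)\Rightarrow(2)$, I would apply the Parseval identity to the test vector $f=f_i$:
\[
\|f_i\|^2=\sum_{j\in\mathcal{I}}|\langle f_i,f_j\rangle|^2=\|f_i\|^4+\sum_{j\neq i}|\langle f_i,f_j\rangle|^2.
\]
Under hypothesis (3), $\|f_i\|^2=1$, so the off-diagonal sum must vanish and hence $\langle f_i,f_j\rangle=0$ for all $j\neq i$. Thus $\{f_i\}_{i\in\mathcal{I}}$ is an orthonormal system. It is also complete, since any $f$ with $\langle f,f_i\rangle=0$ for every $i$ satisfies $\|f\|^2=\sum_i|\langle f,f_i\rangle|^2=0$ by the Parseval identity. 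Therefore $\{f_i\}_{i\in\mathcal{I}}$ is an orthonormal basis.

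The implication $(2)\Rightarrow(1)$ is immediate from the definition, since an orthonormal basis is the image of itself under the identity, a bounded invertible operator, and hence a Riesz basis. For $(1)\Rightarrow(3)$, I would bring in the synthesis operator $T:\ell^2(\mathcal{I})\to H$, $T(\{c_i\}_{i\in\mathcal I})=\sum_i c_i f_i$. The Parseval property is equivalent to $TT^*=\mathrm{Id}_H$, while the Riesz basis property is equivalent to $T$ being a bounded bijection. Combining these, $T^{-1}=T^*$, so $T$ is unitary, and from $f_i=Te_i$ (with $\{e_i\}$ the canonical orthonormal basis of $\ell^2(\mathcal{I})$) one obtains $\|f_i\|=\|e_i\|=1$.

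The main conceptual step is the identification in $(1)\Rightarrow(3)$ of Parseval and Riesz-basis properties with operator-theoretic statements about $T$; once those translations are in hand, each direction reduces to a one-line manipulation, and I would not expect any genuine obstacle, this being a standard result in frame theory.
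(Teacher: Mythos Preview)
Your proof is correct and follows the standard route. Note, however, that the paper does not supply its own proof of this lemma: it is quoted from \cite{1} (Christensen's textbook) and used as a black box in the proof of Theorem~\ref{prop4.8}. There is therefore nothing to compare against in the paper itself; your argument is exactly the kind of elementary verification one finds in the cited reference.
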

\begin{lemma}\label{lem2.8}
	Let $f\in \ell^2(\mathbb{Z})$. If $f\in \ell^2(\mathbb{S})$, then for all $j\in \mathbb{Z}$, a.e $\theta\in \mathbb{R}$, $$Z_f(j,\theta)\mathcal{K}(j)=Z_f(j,\theta).$$
	
\end{lemma}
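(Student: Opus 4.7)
The plan is to reduce the matrix identity to a column-wise statement and then use the $N\mathbb{Z}$-periodicity of $\mathbb{S}$ to kill the relevant entries of $Z_f$ via the support of $f$.

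First I would unpack what right multiplication by $\mathcal{K}(j)$ does. Since $\mathcal{K}(j)$ is the diagonal matrix with entries $\chi_{\mathcal{K}_j}(0),\dots,\chi_{\mathcal{K}_j}(p-1)$, multiplying $Z_f(j,\theta)$ on the right by $\mathcal{K}(j)$ preserves the $k$-th column if $k\in\mathcal{K}_j$ and replaces it with zeros otherwise. Hence the claim $Z_f(j,\theta)\mathcal{K}(j)=Z_f(j,\theta)$ is equivalent to showing that for every $k\in\mathbb{N}_p$ with $j+kM\notin\mathbb{S}$, the whole $k$-th column vanishes; that is, $z_{pM}f(j+kM-rN,\theta)=0$ for every $r\in\mathbb{N}_q$ and a.e.\ $\theta$.

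Next I would expand the Zak transform by definition:
\begin{equation*}
z_{pM}f(j+kM-rN,\theta)=\sum_{n\in\mathbb{Z}}f\bigl(j+kM-rN+npM\bigr)\,e^{2\pi i n\theta}.
\end{equation*}
The key arithmetic observation is that $pM=qN$, so every argument appearing in the sum has the form
\begin{equation*}
j+kM-rN+npM=(j+kM)+(np-r)N,
\end{equation*}
i.e.\ $j+kM$ plus a multiple of $N$. Since $\mathbb{S}$ is $N\mathbb{Z}$-periodic, the assumption $j+kM\notin\mathbb{S}$ forces $(j+kM)+\ell N\notin\mathbb{S}$ for every $\ell\in\mathbb{Z}$. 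Because $f\in\ell^2(\mathbb{S})$ vanishes off $\mathbb{S}$, every term in the series is zero, and so $z_{pM}f(j+kM-rN,\theta)=0$ for all $r\in\mathbb{N}_q$ and every $\theta$.

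There is essentially no hard step: the whole argument rests on the identity $pM=qN$ together with $N\mathbb{Z}$-periodicity of $\mathbb{S}$, and the only thing to be slightly careful about is the bookkeeping of which column gets zeroed by $\mathcal{K}(j)$. I would also mention as a remark that the identity actually holds pointwise in $\theta$, not merely almost everywhere, since the Zak sum is identically zero in that case.
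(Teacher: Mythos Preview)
Your proof is correct and follows essentially the same approach as the paper: both reduce the identity to showing that the $k$-th column of $Z_f(j,\theta)$ vanishes whenever $k\notin\mathcal{K}_j$, expand the Zak transform, and use $pM=qN$ together with the $N\mathbb{Z}$-periodicity of $\mathbb{S}$ to conclude that every argument of $f$ lies outside $\mathbb{S}$. The paper phrases this entry-by-entry while you phrase it column-by-column, but the argument is the same.
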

\begin{proof}
	Let $s\in \mathbb{N}_q$ and $t\in \mathbb{N}_p$. We have: 
	$$\begin{array}{rcl}
		\left(Z_f(j,\theta)\mathcal{K}(j)\right)_{s,t}&=&\displaystyle{\sum_{k=0}^{p-1}Z_f(j,\theta)_{s,k}\mathcal{K}(j)_{k,t}}\\
		&=&\displaystyle{\sum_{k=0}^{p-1}Z_f(j,\theta)_{s,k}\delta_{k,t}\chi_{\mathcal{K}_j}(t)}\\
		&=&Z_f(j,\theta)_{s,t}\chi_{\mathcal{K}_j}(t)\\
		&=&\left\lbrace
		\begin{array}{rcl}
			&Z_f(j,\theta)_{s,t}& \; \text{ if } t\in \mathcal{K}_j,\\
			&0& \;\; \text{ otherwise}. 
		\end{array}
		\right.
	\end{array}$$
	On the other hand, we have $Z_f(j,\theta)_{s,t}=z_{pM}f(j+tM-sN,\theta)=\displaystyle{\sum_{k\in \mathbb{Z}}f(j+tM-sN+kpM)e^{2\pi i k\theta)}}=\displaystyle{\sum_{k\in \mathbb{Z}}f(j+tM-sN+kqN)e^{2\pi i k\theta}}$ since $pM=qN$. Then, if $t\notin \mathcal{K}_j$, $j+tM\notin \mathbb{S}$, then, for all $k\in \mathbb{Z}$, $j+tM-sN+kqN\notin \mathbb{S}$ by the $N\mathbb{Z}$-periodicity of $\mathbb{S}$, thus $f(j+tM-sN+kqN)=0$ for all $k\in \mathbb{Z}$. Hence $Z_f(j,\theta)_{s,t}=0$ if $t\notin \mathcal{K}_j$. The proof is completed.
\end{proof}

\begin{lemma}\label{lem2.9}
	For all $j\in \mathbb{Z}$, $\mathcal{K}(j)$ is an orthogonal projection on $\mathbb{C}^p$. i.e.
	\begin{enumerate}
		\item $\mathcal{K}(j)^2=\mathcal{K}(j)$.
		\item  $\mathcal{K}(j)^*=\mathcal{K}(j)$.
	\end{enumerate}
\end{lemma}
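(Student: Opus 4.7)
The lemma is essentially a direct observation from the structure of $\mathcal{K}(j)$, so the plan is short and the main work is unpacking the definitions.

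My approach is to use the fact that $\mathcal{K}(j)$ is diagonal with entries $\chi_{\mathcal{K}_j}(k) \in \{0,1\}$ for $k \in \mathbb{N}_p$. For any diagonal matrix $D = \mathrm{diag}(d_0,\ldots,d_{p-1})$, one has $D^2 = \mathrm{diag}(d_0^2,\ldots,d_{p-1}^2)$ and $D^* = \mathrm{diag}(\overline{d_0},\ldots,\overline{d_{p-1}})$. So I only need to check that each diagonal entry $\chi_{\mathcal{K}_j}(k)$ is idempotent and self-conjugate.

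For the first claim, since $\chi_{\mathcal{K}_j}(k) \in \{0,1\} \subset \mathbb{R}$, we have $\chi_{\mathcal{K}_j}(k)^2 = \chi_{\mathcal{K}_j}(k)$ (as $0^2=0$ and $1^2=1$), which gives $\mathcal{K}(j)^2 = \mathcal{K}(j)$. For the second claim, each $\chi_{\mathcal{K}_j}(k)$ is real, so $\overline{\chi_{\mathcal{K}_j}(k)} = \chi_{\mathcal{K}_j}(k)$, and since $\mathcal{K}(j)$ is diagonal its conjugate transpose coincides with its entry-wise conjugate, yielding $\mathcal{K}(j)^* = \mathcal{K}(j)$.

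There is no real obstacle here; the only thing to be careful about is stating the two elementary facts about diagonal matrices cleanly before plugging in the values $\chi_{\mathcal{K}_j}(k)$. The proof can be written in just a few lines.
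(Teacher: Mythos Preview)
Your proposal is correct and follows essentially the same approach as the paper: both arguments exploit that $\mathcal{K}(j)$ is diagonal with entries $\chi_{\mathcal{K}_j}(k)\in\{0,1\}$, so squaring and conjugate-transposing act entrywise and fix each entry. The paper simply writes out the two diagonal computations explicitly, which is exactly what your plan describes.
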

\begin{proof}
	Let $j\in\mathbb{Z}$. We have:  $$\begin{array}{rcl}
		\mathcal{K}(j)^2&=&\text{diag}(\chi_{\mathcal{K}_j}(0)^2,\chi_{\mathcal{K}_j}(1)^2,\ldots,\chi_{\mathcal{K}_j}(p-1)^2)\\
		&=&\text{diag}(\chi_{\mathcal{K}_j}(0),\chi_{\mathcal{K}_j}(1),\ldots, \chi_{\mathcal{K}_j}(p-1))=\mathcal{K}(j).
	\end{array}$$
	And $$\begin{array}{rcl}
		\mathcal{K}(j)^*&=&\text{diag}(\overline{\chi_{\mathcal{K}_j}(0)},\overline{\chi_{\mathcal{K}_j}(1)},\ldots,\overline{\chi_{\mathcal{K}_j}(p-1)})\\
		&=&\text{diag}(\chi_{\mathcal{K}_j}(0),\chi_{\mathcal{K}_j}(1),\ldots, \chi_{\mathcal{K}_j}(p-1))=\mathcal{K}(j).
	\end{array}$$
\end{proof}
\section{Characterizations of complete  multiwindow  discrete Gabor systems and multiwindow discrete Gabor frames}
In this section we use all the notations already introduced Without introducing them again. Let $L,M,N\in \mathbb{N}$ and $p,q\in \mathbb{N}$ such that $p\wedge q=1$ and  $\displaystyle{\frac{N}{M}=\frac{p}{q}}$ and denote $\mathbb{S}_N=\mathbb{S}\cap \mathbb{N}_N$. We characterise what $g=:\{g_l\}_{l\in \mathbb{N}_L}\subset \ell^2(\mathbb{S})$ generates  a complete  Gabor system and  a Gabor frame $\mathcal{G}(g,L,M,N)$.
We first present the following proposition:
\begin{proposition}\label{prop2.1}
	Let $g:=\{g_l\}_{l\in \mathbb{N}_L}\subset \ell^2(\mathbb{S})$. Let $M,N\in \mathbb{N}$ and $p,q\in \mathbb{N}$ such that $p\wedge q=1$. Then the integer-valued function $(j,\theta)\rightarrow rank(Z_g(j,\theta))$ is $\displaystyle{\frac{M}{q}}$-periodic with respect to $j$ and $1$-periodic with respect to $\theta$. Moreover,  for all $j\in \mathbb{Z}$ and a.e $ \theta \in \mathbb{R}$, we have: 
	\begin{equation}
		rank(Z_g(j,\theta))\leq \; card(\mathcal{K}_j).
	\end{equation}
\end{proposition}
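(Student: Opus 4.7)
The plan is to treat the three assertions separately, in order of increasing difficulty.

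First, for the $1$-periodicity in $\theta$, I would simply apply the quasi-periodicity of $z_{pM}$ with the parameters $k=0$ and $l=1$: the identity $z_{pM}f(j+kpM,\theta+l)=e^{-2\pi i k\theta}z_{pM}f(j,\theta)$ collapses to $z_{pM}f(x,\theta+1)=z_{pM}f(x,\theta)$ for every $x\in\mathbb{Z}$, so $Z_g(j,\theta+1)=Z_g(j,\theta)$ entrywise and the rank is unchanged.

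The crux is the $\tfrac{M}{q}$-periodicity in $j$. Here the idea is to show that the shift $j\mapsto j+\tfrac{M}{q}$ inside $Z_g$ amounts to multiplying $Z_g(j,\theta)$ on each side by invertible matrices. I would apply Lemma \ref{lem1.4} to the integer $1$ to obtain $k_0\in\mathbb{N}_p$ and $l_0\in\mathbb{Z}$ with $1=k_0 q+l_0 p$, then multiply by $\tfrac{M}{q}$ and use $pM=qN$ to get $\tfrac{M}{q}=k_0 M+l_0 N$. For fixed $(r,k)\in\mathbb{N}_q\times\mathbb{N}_p$, I would perform the Euclidean divisions $k+k_0=q_1(k)\,p+k'(k)$ and $r-l_0=q_2(r)\,q+r'(r)$ with $k'(k)\in\mathbb{N}_p$, $r'(r)\in\mathbb{N}_q$, and substitute to obtain
$$\frac{M}{q}+kM-rN=k'(k)M-r'(r)N+\bigl(q_1(k)-q_2(r)\bigr)pM.$$
Quasi-periodicity of $z_{pM}$ then yields $Z_g(j+\tfrac{M}{q},\theta)_{r,k}=e^{-2\pi i(q_1(k)-q_2(r))\theta}\,Z_g(j,\theta)_{r'(r),k'(k)}$ (applied blockwise for each $Z_{g_l}$). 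Since $k\mapsto k'(k)$ and $r\mapsto r'(r)$ are bijections of $\mathbb{N}_p$ and $\mathbb{N}_q$, and since the exponent splits as $e^{-2\pi i q_1(k)\theta}\cdot e^{2\pi i q_2(r)\theta}$ into a function of $k$ alone times a function of $r$ alone, this identity rewrites in matrix form as
$$Z_g\!\left(j+\tfrac{M}{q},\theta\right)=D_1(\theta)\,P_1\,Z_g(j,\theta)\,P_2\,D_2(\theta),$$
with $D_1,D_2$ invertible diagonal and $P_1,P_2$ permutation matrices (the row-side factors acting blockwise across the $L$ blocks). Invertibility of all four factors preserves the rank.

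Finally, for the bound I would invoke Lemma \ref{lem2.8} on each window $g_l\in\ell^2(\mathbb{S})$ to get $Z_{g_l}(j,\theta)\mathcal{K}(j)=Z_{g_l}(j,\theta)$; stacking blocks gives $Z_g(j,\theta)\mathcal{K}(j)=Z_g(j,\theta)$. Since $\mathcal{K}(j)$ is diagonal with $\{0,1\}$ entries, $rank(\mathcal{K}(j))=card(\mathcal{K}_j)$, so $rank(Z_g(j,\theta))=rank(Z_g(j,\theta)\mathcal{K}(j))\leq card(\mathcal{K}_j)$. The main obstacle I anticipate is precisely the bookkeeping in the periodicity step: one must verify carefully that after the Euclidean divisions the column permutation and the column factor $e^{-2\pi i q_1(k)\theta}$ depend only on $k$, the row permutation and the row factor $e^{2\pi i q_2(r)\theta}$ depend only on $r$, and neither depends on $j$ or the window index $l$, so that the whole transformation genuinely factors through invertible matrices acting on either side of $Z_g(j,\theta)$.
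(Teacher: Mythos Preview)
Your proposal is correct. The core idea---that the relevant shifts act on $Z_g(j,\theta)$ by invertible row and column operations---is the same as the paper's, but your packaging is tighter. The paper proceeds in two stages: it first isolates an intermediate lemma showing that $\operatorname{rank}(Z_g(j,\theta))=\operatorname{rank}(Z_g(j+k'M+r'N,\theta))$ for $k'\in\mathbb{N}_p$, $r'\in\mathbb{N}_q$, by analyzing the column effect of a shift by $k'M$ and the row effect of a shift by $r'N$ separately; only then does it invoke Lemma~\ref{lem1.4} on an arbitrary $s$ to decompose a general shift by $\tfrac{M}{q}s$. You instead apply Lemma~\ref{lem1.4} once to the integer $1$, write $\tfrac{M}{q}=k_0M+l_0N$, and handle both the row and column reindexing simultaneously via the explicit factorization $Z_g(j+\tfrac{M}{q},\theta)=D_1P_1\,Z_g(j,\theta)\,P_2D_2$. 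This is more economical and avoids the intermediate lemma. For the rank bound, the paper argues directly that the $k$-th column of $Z_g(j,\theta)$ vanishes whenever $k\notin\mathcal{K}_j$, whereas you appeal to Lemma~\ref{lem2.8} and the sub-multiplicativity of rank; the two arguments are equivalent (indeed Lemma~\ref{lem2.8} is exactly the column-vanishing statement in matrix form), but yours reuses an existing lemma rather than reproving it inline.
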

For the proof, we need the following lemma:
\begin{lemma}\label{lem2.2}
	For all $j\in \mathbb{Z}$,  a.e $\theta \in \mathbb{R}$, $k'\in \mathbb{N}_p$ and $r'\in \mathbb{N}_q$, we have: 
	$$rank(Z_g(j,\theta))=rank(Z_g(j+k'M+r'N,\theta)).$$
\end{lemma}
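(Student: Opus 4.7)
The plan is to show that the translations $j\mapsto j+k'M$ and $j\mapsto j+r'N$ each amount to multiplying $Z_g(j,\theta)$ by an invertible matrix, on the right and on the left respectively. Since multiplication by invertible matrices preserves rank, the lemma will follow by applying the two shifts in succession: $Z_g(j+k'M+r'N,\theta)$ is obtained from $Z_g(j,\theta)$ by two such multiplications.

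For the shift by $k'M$, I would expand
$$Z_{g_l}(j+k'M,\theta)_{r,k} = z_{pM}g_l\bigl(j + (k+k')M - rN,\theta\bigr)$$
and split into the cases $k+k' < p$ and $k+k' \geq p$. The first case gives $Z_{g_l}(j,\theta)_{r,k+k'}$ directly; in the second, writing $k+k' = p + k''$ with $k''\in\mathbb{N}_{p-1}$ and invoking the quasi-periodicity $z_{pM}f(\cdot + pM,\theta) = e^{-2\pi i\theta}\, z_{pM}f(\cdot,\theta)$, one extracts $e^{-2\pi i\theta}\,Z_{g_l}(j,\theta)_{r,k''}$. Encoding these two rules in a single $p\times p$ matrix $Q=Q(k',\theta)$, obtained from a permutation of $\mathbb{N}_p$ by rescaling certain columns by $e^{-2\pi i\theta}$, yields $Z_{g_l}(j+k'M,\theta) = Z_{g_l}(j,\theta)\,Q$ for every $l$, hence $Z_g(j+k'M,\theta) = Z_g(j,\theta)\,Q$. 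Since $Q$ is a permutation times a diagonal with nonzero entries, it is invertible, so the rank is preserved.

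The shift by $r'N$ is handled symmetrically. Expanding
$$Z_{g_l}(j+r'N,\theta)_{r,k} = z_{pM}g_l\bigl(j + kM - (r-r')N,\theta\bigr)$$
and distinguishing $r \geq r'$ from $r < r'$ (the latter using $qN = pM$ to pull $r-r'$ back into $\mathbb{N}_q$ as $r-r'+q$ at the cost of a factor $e^{-2\pi i\theta}$), one obtains a $q\times q$ invertible matrix $P=P(r',\theta)$ satisfying $Z_{g_l}(j+r'N,\theta) = P\,Z_{g_l}(j,\theta)$ uniformly in $l$. Stacking over $l$ gives $Z_g(j+r'N,\theta) = \widetilde P\,Z_g(j,\theta)$, where $\widetilde P$ is the block-diagonal $qL\times qL$ matrix with $L$ copies of $P$ on the diagonal; $\widetilde P$ is clearly invertible. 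Composing,
$$Z_g(j+k'M+r'N,\theta) = \widetilde P\,Z_g(j,\theta)\,Q,$$
from which the rank equality is immediate.

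The only real obstacle is bookkeeping: tracking the column wrap-around $k+k'\mapsto k+k'-p$ and the row wrap-around $r-r'\mapsto r-r'+q$, and placing the compensating phases $e^{-2\pi i\theta}$ in the correct columns of $Q$ and rows of $P$. Once these matrices are written down explicitly, their invertibility is manifest because each is a permutation matrix up to nonzero phase scalings.
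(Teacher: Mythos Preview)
Your proposal is correct and follows essentially the same approach as the paper: both arguments observe that shifting $j$ by $k'M$ permutes the columns of $Z_g(j,\theta)$ up to nonzero phase factors, and shifting by $r'N$ does the same to the rows (within each $q\times p$ block, uniformly in $l$). The paper tracks these permutations via explicit bijections $\phi:\mathbb{N}_p\to\mathbb{N}_p$ and $\psi:\mathbb{N}_{qL}\to\mathbb{N}_{qL}$, whereas you package them as right and left multiplication by invertible matrices $Q$ and $\widetilde P$; the underlying content is identical.
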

\begin{proof}
	Let $j\in \mathbb{Z}$, $\theta \in \mathbb{R}$. Denote by $C_k(j,\theta)$ the $k-$th column of $Z_g(j,\theta)$.\\
	We have for all $l\in \mathbb{N}_L$ and for all $r\in \mathbb{N}_q$, $z_{pM}g_l((j+k'M)+kM-rN,\theta)=\\z_{pM}g_l(j+(k+k_0)M-rN,\theta)$.
	If $0\leq k\leq p-k'-1$, then $C_k(j+k_0M,\theta)=C_{k+k_0}(j,\theta)$. Otherwise  ($p-k'\leq k\leq p-1$), we have by quasi-periodicity of the Zak transform $z_{pM}g_l((j+k'M)+kM-rN,\theta)=e^{-2\pi i \theta}.z_{pM}g_l(j+(k+k'-p)M-rN,\theta).$ Then $C_k(j+k'M,\theta)=e^{-2\pi i\theta}.C_{k+k'-p}(j,\theta).$\\
	Consider the map: $$\phi:\begin{array}{rcl}
		\mathbb{N}_p&\rightarrow& \mathbb{N}_p\\
		k&\mapsto& \left\lbrace\begin{array}{rcl}
			k+k' \;\;\text{ if }  0\leq k\leq p-k'-1, \\
			k+k'-p \;\;\text{ if } p-k'\leq k\leq p-1.
		\end{array}
		\right.
	\end{array}$$
	We show, easily, that $\phi$ is injective. It is, then, bijective. Hence: 
	$$\begin{array}{rcl}
		rank(Z_g(j,\theta))&=&rank\{C_k(j,\theta):\; k\in \mathbb{N}_p\}\\
		&=&rank\{C_k(j+k'M,\theta):\; k\in \mathbb{N}_p\}\\
		&=&rank(Z_g(j+k'M,\theta)).
	\end{array}$$
	Denote by $R_r(j,\theta)$ the $r$-th row of $Z_g(j,\theta)$. Then there exists a unique $(l,r_0)\in \mathbb{N}_L\times \mathbb{N}_q$ such that $r=lq+r_0$. Then ($\forall j_0\in \mathbb{Z}$) $R_r(j_0,\theta)$ is the $r_0$-th row of $Z_{g_l}(j_0,\theta)$.
	We have $z_{pM}g_l((j+r'N)+kM-r_0N,\theta)=z_{pM}g_l(j+kM-(r_0-r')N,\theta)$.\\
	If $r'\leq r_0\leq q-1$, then $R_r(j+r'N,\theta)$ is the $(r_0-r')-$th row of $Z_{g_l}(j,\theta)$, thus $R_r(j+r'N,\theta)=R_{r-r'}(j,\theta)$. Otherwise ($0\leq r_0\leq r'-1$), since $pM=qN$ and by quasi-periodicity of the Zak transform, we have: $z_{pM}g_l((j+r'N)+kM-r_0N,\theta)=z_{pM}g_l(j+kM-(r_0-r'+q)N,\theta)$. Then $R_r(j+r'N,\theta)$ is the $(r_0-r'+q)-$th row of $Z_{g_l}(j,\theta)$, thus $R_r(j+r'N,\theta)=R_{r-r'+q}(j,\theta)$.\\
	Consider the map: $$\psi:\begin{array}{rcl}
		\mathbb{N}_{qL}&\rightarrow& \mathbb{N}_{qL}\\
		r&\mapsto& \left\lbrace\begin{array}{rcl}
			r-r' \;\;\text{ if }  lq+r'\leq r\leq (l+1)q-1, \\
			r-r'+q \;\;\text{ if } lq\leq r\leq lq+r'-1.
		\end{array}
		\right.
	\end{array}$$
	It is easy to show that $\psi$ is injective. Then it is bijective. Hence: 
	$$\begin{array}{rcl}
		rank(Z_g(j,\theta))&=&rank\{R_r(j,\theta):\; r\in \mathbb{N}_{qL}\}\\
		&=&rank\{R_r(j+r'N,\theta):\; r\in \mathbb{N}_{qL}\}\\
		&=&rank(Z_g(j+r'N,\theta)).
	\end{array}$$
	Hence For all $j\in \mathbb{Z}$,  $\theta \in \mathbb{R}$, $k'\in \mathbb{N}_p$ and $r'\in \mathbb{N}_q$, we have: 
	$$rank(Z_g(j,\theta))=rank(Z_g(j+k'M+r'N,\theta)).$$
\end{proof}
\begin{proof}[Proof of proposition \ref{prop2.1}]
	Let $j\in \mathbb{Z}, \, \theta\in \mathbb{R}$. Given an arbitrary $s\in \mathbb{Z}$. By lemma \ref{lem1.4}, there exists a unique $(k_0,m_0,r_0)\in \mathbb{N}_p\times \mathbb{Z}\times \mathbb{N}_q$ such that $s=k_0q+(m_0q+r_0)p$. Then: 
	$$\begin{array}{rcl}
		Z_g(j+\displaystyle{\frac{M}{q}}s,\theta)&=&Z_g(j+k_0M+m_0pM+r_0N,\theta)\\
		&=&e^{2\pi i m_{0}\theta}.Z_g(j+k_0M+r_0N,\theta)\\
		&=&e^{2\pi i m_{0}\theta}.Z_g(j,\theta) \;\;\; \text{ lemma }\ref{lem2.2}.
	\end{array}$$
	Hence: $$rank\left(Z_g(j+\displaystyle{\frac{M}{q}}s,\theta)\right)=rank(Z_g(j,\theta)).$$
	The $1$-periodicity with respect to $\theta$ is simply due to the $1$-periodicity of the Zak transform with respect to $\theta$. 
	On the other hand, if $k\notin \mathcal{K}_j$, i.e. $k$ is such that $j+kM\notin \mathbb{S}$, then, by $N\mathbb{Z}-$periodicity of $\mathbb{S}$, for all $r\in \mathbb{Z}$, $j+kM-rN\notin \mathbb{S}$, and thus for all $l\in \mathbb{N}_L$ and for all $r\in \mathbb{N}_q$, $z_{pM}g_l(j+kM-rN,\theta)=0$ since $pM=qN$. Then the $k$-th column of $Z_g(j,\theta)$ is identically zero. Hence $rank(Z_g(j,\theta))\leq card(\mathcal{K}_j)$.
\end{proof}
\begin{remark}\label{rem1}
	Let $j\in \mathbb{Z}$. Since $\{\mathbb{S}_N+nN\}_{n\in \mathbb{Z}}$ is a partition of $\mathbb{S}$, we have: 
	$$\begin{array}{rcl}
		card(\mathcal{K}_j)&=&\displaystyle{\sum_{k=0}^{p-1}\chi_{\mathbb{S}}(j+kM)}\\
		&=&\displaystyle{\sum_{k=0}^{p-1}\sum_{n\in \mathbb{Z}}\chi_{\mathbb{S}_N}(j+nN+kM)}\\
		&=&\displaystyle{\sum_{k=0}^{p-1}\sum_{n\in \mathbb{Z}}\chi_{\mathbb{S}_N}(j+\displaystyle{\frac{np+kq}{q}M})\;\;\; \text{ since }N=\displaystyle{\frac{pM}{q}}}\\&=&\displaystyle{\sum_{n\in \mathbb{Z}}\chi_{\mathbb{S}_N}(j+\displaystyle{\frac{M}{q}n})\;\;\; \text{ lemma \ref{lem1.4}}}.
	\end{array}$$
	Hence $card(\mathcal{K}_j)$ is $\displaystyle{\frac{M}{q}}$-periodic. Then  the inequality (1) holds for all $j\in \mathbb{Z}$ and a.e $\theta \in \mathbb{R}$ if and only if it holds for all $j\in \mathbb{N}_{\frac{M}{q}}$ and a.e $\theta\in [0,1[$.\\
\end{remark}

The following lemma is very useful for the rest.
\begin{lemma}\label{lem2.4}
	Let $g:=\{g_l\}_{l\in \mathbb{N}_L}\subset \ell^2(\mathbb{Z})$. Let $f\in \ell^2(\mathbb{Z})$. Then the following statements are equivalent:
	\begin{enumerate}
		\item $f$ is orthogonal to $\mathcal{G}(g,L,M,N)$.
		\item For all $j\in \mathbb{N}_M$, a.e. $\theta\in [0,1[$, $Z_g(j,\theta)F(j,\theta)=0$.
	\end{enumerate}
	Where $F(j,\theta):=(\overline{z_{pM}f(j+kM,\theta)})_{k\in \mathbb{N}_p}^t$ for $j\in \mathbb{N}_M
	$ and a.e $\theta\in [0,1[$.
\end{lemma}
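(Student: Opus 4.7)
The plan is to expand each inner product $\langle f, E_{m/M}T_{nN}g_l\rangle$ via the Zak transform $z_K$ at length $K:=pM=qN$, and then apply standard Fourier-uniqueness arguments to convert the simultaneous vanishing of all these inner products into the pointwise linear system $Z_g(j,\theta)F(j,\theta)=0$. Because every step is an equivalence, the same chain of reductions proves both implications in the lemma.

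First I would partition $\mathbb{Z}$ by writing each $s\in\mathbb{Z}$ uniquely as $s=j+kM+\ell K$ with $(j,k,\ell)\in\mathbb{N}_M\times\mathbb{N}_p\times\mathbb{Z}$. Since $e^{-2\pi i m(kM+\ell K)/M}=1$ (because $K=pM$ and all indices are integers), the inner product factors as
$$\langle f, E_{m/M}T_{nN}g_l\rangle=\sum_{j\in\mathbb{N}_M}e^{-2\pi imj/M}\sum_{k\in\mathbb{N}_p}\sum_{\ell\in\mathbb{Z}}f(j+kM+\ell K)\overline{g_l(j+kM-nN+\ell K)}.$$
Parseval's identity on $L^2([0,1[)$, applied to the Fourier series with coefficients $\{h(a+\ell K)\}_{\ell\in\mathbb{Z}}$, then converts the innermost sum over $\ell$ into $\int_0^1 z_Kf(j+kM,\theta)\overline{z_Kg_l(j+kM-nN,\theta)}\,d\theta$.

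Next I would split $n\in\mathbb{Z}$ uniquely as $n=hq+r$ with $h\in\mathbb{Z}$ and $r\in\mathbb{N}_q$, so that $nN=hK+rN$, and apply quasi-periodicity of $z_K$ to peel off the $h$-dependence: $z_Kg_l(j+kM-rN-hK,\theta)=e^{2\pi ih\theta}\,z_Kg_l(j+kM-rN,\theta)$. The simultaneous vanishing of $\langle f, E_{m/M}T_{nN}g_l\rangle$ over $(m,n,l)\in\mathbb{N}_M\times\mathbb{Z}\times\mathbb{N}_L$ is then equivalent, by linear independence of $\{e^{-2\pi imj/M}\}_{m\in\mathbb{N}_M}$ in the $j$-variable and uniqueness of Fourier coefficients in the $h$-variable, to the pointwise a.e.\ condition
$$\sum_{k\in\mathbb{N}_p}z_Kf(j+kM,\theta)\,\overline{z_Kg_l(j+kM-rN,\theta)}=0,\qquad j\in\mathbb{N}_M,\ l\in\mathbb{N}_L,\ r\in\mathbb{N}_q.$$

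Finally I would identify $z_Kf(j+kM,\theta)=\overline{F(j,\theta)_k}$ and $z_Kg_l(j+kM-rN,\theta)=Z_{g_l}(j,\theta)_{r,k}$, so the left-hand side is the complex conjugate of $\bigl(Z_{g_l}(j,\theta)F(j,\theta)\bigr)_r$. Since $Z_g(j,\theta)$ is the vertical stack of the blocks $Z_{g_l}(j,\theta)$, the pointwise condition reads exactly $Z_g(j,\theta)F(j,\theta)=0$ for $j\in\mathbb{N}_M$ and a.e.\ $\theta\in[0,1[$. The main obstacle is the bookkeeping around the splitting $n=hq+r$ and the quasi-periodicity factors: one has to track the exponentials $e^{\pm 2\pi ih\theta}$ so that they combine into a clean Fourier series in $h$. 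The interchange of sum and integral used in the Parseval step is routine from $f,g_l\in\ell^2(\mathbb{Z})$ via Cauchy--Schwarz on the Zak side.
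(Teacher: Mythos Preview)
Your argument is correct. The Parseval step, the splitting $n=hq+r$, the quasi-periodicity bookkeeping, and the two-stage Fourier-uniqueness reduction (DFT in $m$ to isolate $j$, then Fourier coefficients in $h$ to isolate $\theta$) all go through exactly as you describe, and the final identification with the rows of $Z_g(j,\theta)$ is clean.

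The paper takes a shorter route: it observes that $f\perp\mathcal{G}(g,L,M,N)$ is equivalent to $f\perp\mathcal{G}(g_l,M,N)$ for every $l\in\mathbb{N}_L$, and that $Z_g(j,\theta)F(j,\theta)=0$ is equivalent to $Z_{g_l}(j,\theta)F(j,\theta)=0$ for every $l$, then cites the single-window result (Lemma~3.1 of Li--Lian \cite{4}) for each block. So the paper outsources the analytic core, whereas you supply it directly. Your approach is self-contained and, incidentally, reproduces the same Zak-transform computation that the paper itself carries out later in the proof of Lemma~\ref{lem3.7} (there for the frame inequality rather than for orthogonality). The trade-off is length versus independence from \cite{4}; mathematically the two arguments coincide once one unpacks the cited lemma.
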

\begin{proof}
	We have: \\
	$f$ is orthogonal to $\mathcal{G}(g,L,M,N)$ $\Longleftrightarrow$ $f$ is orthogonal to $\mathcal{G}(g_l,M,N)$ for all $l\in \mathbb{N}_L$.\\
	And we have:\\ 
	$Z_g(j,\theta)F(j,\theta)=0 $ $\Longleftrightarrow$ $Z_{g_l}(j,\theta)F(j,\theta)=0$ for all $l\in \mathbb{N}_L$.\\
	These equivalences together with lemma 3.1 in \cite{4} complete the proof.
\end{proof}

The following proposition characterizes complete multi-window Gabor systems on $\mathbb{S}$.
\begin{theorem}\label{prop3.5}
	Let $g:=\{g_l\}_{l\in \mathbb{N}_L}\subset \ell^2(\mathbb{S})$. Then the following statements are equivalent:
	\begin{enumerate}
		\item $\mathcal{G}(g,L,M,N)$ is complete in $\ell^2(\mathbb{S})$.
		\item For all $j\in \mathbb{N}_{\frac{M}{q}}$, a.e $\theta \in [0,1[$, $rank(Z_g(j,\theta))=card(\mathcal{K}_j)$.
		\item For all $j\in \mathbb{Z}$, a.e $\theta\in \mathbb{R}$, $rank(Z_g(j,\theta))=card(\mathcal{K}_j)$.
	\end{enumerate}
\end{theorem}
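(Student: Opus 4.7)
The plan is to prove (2)$\Leftrightarrow$(3) directly from periodicity considerations, and to reduce the main equivalence (1)$\Leftrightarrow$(3) to a fiberwise linear-algebra statement via Lemma~\ref{lem2.4}. For (2)$\Leftrightarrow$(3), Proposition~\ref{prop2.1} shows that $rank(Z_g(j,\theta))$ is $M/q$-periodic in $j$ and $1$-periodic in $\theta$, and Remark~\ref{rem1} gives the same $M/q$-periodicity for $card(\mathcal{K}_j)$; hence the two rank conditions are equivalent.

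For (1)$\Leftrightarrow$(3), I would apply Lemma~\ref{lem2.4} to translate non-completeness of $\mathcal{G}(g,L,M,N)$ in $\ell^2(\mathbb{S})$ into the existence of a nonzero $f\in\ell^2(\mathbb{S})$ with $Z_g(j,\theta)F(j,\theta)=0$ for all $j\in\mathbb{N}_M$ and a.e.\ $\theta\in[0,1[$. Two consequences of $f,g_l\in\ell^2(\mathbb{S})$ structure the problem. First, Lemma~\ref{lem2.8} applied to $f$ forces $F(j,\theta)_k=0$ for $k\notin\mathcal{K}_j$, i.e.\ $F(j,\theta)\in\mathcal{K}(j)\mathbb{C}^p$, a subspace of dimension $card(\mathcal{K}_j)$. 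Second, Lemma~\ref{lem2.8} applied to each $g_l$ gives $Z_g(j,\theta)\mathcal{K}(j)=Z_g(j,\theta)$, so the columns of $Z_g(j,\theta)$ indexed by $k\notin\mathcal{K}_j$ vanish; consequently $rank(Z_g(j,\theta))=card(\mathcal{K}_j)$ is equivalent to the injectivity of $Z_g(j,\theta)$ restricted to $\mathcal{K}(j)\mathbb{C}^p$.

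To convert this fiberwise injectivity into the global assertion, I would invoke Lemma~\ref{lem1.1} with $K=pM$, which is permissible because $pM=qN$ and $\mathbb{S}$ is $N\mathbb{Z}$-periodic, hence $pM\mathbb{Z}$-periodic. Unpacking $\mathbb{S}_{pM}=\{j+kM:j\in\mathbb{N}_M,\ k\in\mathcal{K}_j\}$, the unitary $z_{pM}$ identifies $\ell^2(\mathbb{S})$ with the Hilbert space of measurable $F:\mathbb{N}_M\times[0,1[\to\mathbb{C}^p$ satisfying $F(j,\theta)\in\mathcal{K}(j)\mathbb{C}^p$, and every such $F$ arises from a unique $f\in\ell^2(\mathbb{S})$ with $\|f\|^2$ equal to the corresponding vector $L^2$ norm. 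Combining this with the previous paragraph, $\mathcal{G}(g,L,M,N)$ is complete in $\ell^2(\mathbb{S})$ iff for a.e.\ $(j,\theta)\in\mathbb{N}_M\times[0,1[$, $Z_g(j,\theta)$ is injective on $\mathcal{K}(j)\mathbb{C}^p$, iff $rank(Z_g(j,\theta))=card(\mathcal{K}_j)$ a.e.\ on $\mathbb{N}_M\times[0,1[$, which by the $M/q$-periodicity noted above extends to (3).

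I expect the delicate step to be the direction ``$rank<card$ on a positive measure set $\Rightarrow$ non-completeness'': one has to produce a \emph{measurable} nonzero section $F(j,\theta)\in\ker Z_g(j,\theta)\cap\mathcal{K}(j)\mathbb{C}^p$ on that set, so as to define the offending $f\in\ell^2(\mathbb{S})$ through the unitary identification above. Because both $\mathcal{K}(j)$ and $Z_g(j,\theta)$ depend measurably on $(j,\theta)$, a standard measurable-selection argument (e.g.\ choosing the orthogonal projection onto $\ker Z_g(j,\theta)\cap\mathcal{K}(j)\mathbb{C}^p$ and applying it to a fixed vector, then normalizing) delivers such an $F$; this measurability point should be addressed explicitly rather than taken for granted.
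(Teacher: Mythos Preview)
Your proposal is correct and follows essentially the same route as the paper: the equivalence (2)$\Leftrightarrow$(3) via periodicity, the reduction of (1) to the fiberwise kernel condition through Lemma~\ref{lem2.4}, and---for the delicate direction---the construction of a nonzero measurable section in $\ker Z_g(j,\theta)\cap\mathcal{K}(j)\mathbb{C}^p$ by applying the orthogonal projection onto the kernel to a fixed standard basis vector $e_{k_0}$ with $k_0\in\mathcal{K}_j$. The paper carries out exactly this projection argument (without the normalization you mention, which is unnecessary since any nonzero $L^2$ section suffices), and verifies that the resulting vector indeed lies in $\mathcal{K}(j)\mathbb{C}^p$ by observing that $e_k\in\ker Z_g(j,\theta)$ for $k\notin\mathcal{K}_j$, so that $\langle \mathbb{P}(j,\theta)e_{k_0},e_k\rangle=\langle e_{k_0},e_k\rangle=0$.
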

\begin{proof}
	By proposition \ref{prop2.1}, $rank(Z_g(j,\theta))$ is $\displaystyle{\frac{M}{q}}-$periodic with repect to $j$ and $1$-periodic with respect to $\theta$.  And by remark \ref{rem1}, $card(\mathcal{K}_j)$ is $\displaystyle{\frac{M}{q}}$-periodic. Hence $(2)\Longleftrightarrow (3)$.\\
	We will use the notations in lemma \ref{lem2.4}.  It is obvious that for $f\in \ell^2(\mathbb{Z})$, $F(j,\theta)=0$ for all $j\in \mathbb{N}_M$ and  a.e $\theta\in [0,1[$ if and only if $f=0$. Then by lemma \ref{lem2.4}, $(1)$ is equivalent to the fact that: for $f\in \ell^2(\mathbb{S})$,\\ ($\forall j\in \mathbb{N}_M$, a.e $\theta\in [0,1[$, $Z_g(j,\theta)F(j,\theta)=0)\Longrightarrow (\forall j\in \mathbb{N}_M$, a.e $\theta\in [0,1[,\; F(j,\theta)=0$).
	\begin{itemize}
		\item[] $(1)\Longrightarrow(3):$ Assume that $\mathcal{G}(g,L,M,N)$ is complete in $\ell^2(\mathbb{S})$ and suppose, by contradiction, that $(3)$ fails. Then by proposition \ref{prop2.1}, there exist $j_0\in \mathbb{N}_M$ and $E_0\subset [0,1[$ with positive measure such that for all $\theta\in E_0$:
		$$rank(Z_g(j_0,\theta))< card(\mathcal{K}_{j_0}).$$
		For a.e $\theta \in [0,1[$, denote $\mathbb{P}(j_0,\theta):\mathbb{C}^p\rightarrow \mathbb{C}^p$ the orthogonal projection onto the kernel of $Z_g(j_0,\theta)$. Let $\{e_k\}_{k\in \mathbb{N}_p}$ be the standard orthonormal basis of $\mathbb{C}^p$. 
		Suppose that $F=span\{e_k:\,k\in \mathcal{K}_j\}\subset N(\mathbb{P}(j_0,\theta))$ for some $\theta \in E_0$. Then $F\oplus N(Z_g(j_0,\theta))$ is an orthogonal sum. Thus:
		$$\begin{array}{rcl}
			p&\geqslant&dim(F\oplus N(Z_g(j_0,\theta))\,)\\
			&=&dim F+dim (N(Z_g(j_0,\theta))\,)\\
			&=& card(\mathcal{K}_j)+(p-rank(Z_g(j_0,\theta))\,).
		\end{array}$$
		Hence $rank(Z_g(j_0,\theta)\,)\geqslant card(\mathcal{K}_j).$ Contradiction. Then there exist $k_0\in \mathcal{K}_{j_0}$ and $E_0'\subset E_0$ with positive measure such that $e_{k_0}\notin N(\mathbb{P}(j_0,\theta)\,)$ for a.e $\theta\in E_0'$. i.e. $\mathbb{P}(j_0,\theta)e_{k_0}\neq 0$ for  $\theta \in E_0'$. Define for all $j\in \mathbb{N}_M$, a.e $\theta \in [0,1[$, $F(j,\theta)=\delta_{j,j_0}.\mathbb{P}(j_0,\theta)e_{k_0}.$ Observe that if $k\in \mathbb{N}_p-\mathcal{K}_{j_0}$, then $e_k\in N(Z_g(j_0,\theta)\,)$ for a.e $\theta \in [0,1[$. Then for all $k\in \mathbb{N}_p-\mathcal{K}_{j_0}$, a.e $\theta \in [0,1[$, $\mathbb{P}(j_0,\theta)e_k=e_k$. Thus for $k\in \mathbb{N}_p-\mathcal{K}_j$, $F(j_0,\theta)_k=\langle F(j_0,\theta),e_k\rangle=\langle \mathbb{P}(j_0,\theta)e_{k_0},e_k\rangle=\langle e_{k_0},\mathbb{P}(j_0,\theta) e_k\rangle=\langle e_{k_0},e_k\rangle=0.$\\
		Define $f\in \ell^2(\mathbb{Z})$ by $z_{pM}f(j+kM,\theta)=F(j,\theta)_k$ for all $j\in \mathbb{N}_M$, a.e $\theta\in [0,1[$. Then by lemma \ref{lem1.1}, $f\in \ell^2(\mathbb{S})$. Since $F(j_0,\theta)=\mathbb{P}(j_0,\theta)e_{k_0}\neq 0$ for all $\theta \in E_0'$ which is with positive measure, then $f\neq 0$.\\
		On the other hand, we have $Z_g(j,\theta)F(j,\theta)=0$ for all $j\in \mathbb{N}_M$ and a.e $\theta\in [0,1[$. In fact, if $j\neq j_0$, then, by definition of $F$, for a.e $\theta\in [0,1[$, $F(j,\theta)=0$, hence $Z_g(j,\theta)F(j,\theta)=0$. Otherwise, $F(j_0,\theta)=\mathbb{P}(j_0,\theta)e_{k_0}$, then $Z_g(j_0,\theta)F(j_0,\theta)=Z_g(j_0,\theta)\mathbb{P}(j_0,\theta)e_{k_0}=0$ since $\mathbb{P}(j_0,\theta)e_{k_0}\in N(Z_g(j_0,\theta)\,)$. Then, by lemma \ref{lem2.4}, $f$ is orthogonal to $\mathcal{G}(g,L,M,N)$ but $f\neq 0$. Contradiction with $(1)$.
		\item[]$(2)\Longrightarrow (1)$: Assume $(3)$ and let $f\in \ell^2(\mathbb{S})$ such that  for all $j\in \mathbb{N}_M$ and a.e $\theta\in [0,1[$, \begin{equation}Z_g(j,\theta)F(j,\theta)=0. 
		\end{equation}Let's prove that $F(j,\theta)=0$ for all $j\in \mathbb{N}_M$, a.e $\theta\in [0,1[$. Let $j\in \mathbb{N}_M$. If $\mathcal{K}_j=\emptyset$, then by the definition of $z_{pM}f$  and since $pM=qN$, $F(j,\theta)=0$ for a.e $\theta\in [0,1[$. Otherwise, i.e. $\mathcal{K}_j\neq \emptyset$. Let $k\in \mathbb{N}_p-\mathcal{K}_j$, then, by the definition of the Zak transform and since $pM=qN$, the $k$-th column of $Z_g(j,\theta)$ is identically zero and we also have $z_{pM}f(j+kM,\theta)=0$ for a.e $\theta \in [0,1[$. From $(3)$, the submatrix of $Z_g(j,\theta)$ of size $qL\times card(\mathcal{K}_j)$ obtained by removing all the columns with indices not in $\mathcal{K}_j$ has the same rank than $Z_g(j,\theta)$ which is  $card(\mathcal{K}_j)$. Then this submatrix is injective, thus, by equality (2), $z_{pM}f(j+kM,\theta)=0$ for a.e $\theta \in [0,1[$. Then for all $k\in \mathbb{N}_p$, $z_{pM}f(j+kM,\theta)=0$ for a.e $\theta \in [0,1[$. Hence $F(j,\theta)=0$ for a.e $\theta \in [0,1[$. Hence $\mathcal{G}(g,L,M,N)$ is complete in $\ell^2(\mathbb{S})$.
	\end{itemize}
\end{proof}
Now we characterize multi-window Gabor frames for $\ell^2(\mathbb{S})$ using the Zak transform.
\begin{theorem}\label{prop3.6}
	Given \(g:=\{g_l\}_{l\in \mathbb{N}_L} \subset \ell^2(\mathbb{S})\). Then the following statements are equivalent:
	\begin{enumerate}
		\item $\mathcal{G}(g,L, M,N)\) is a frame for \(\ell^2(S)$ with frame bounds $0 < A \leq B $.
		\item 
		\begin{equation}
			\frac{A}{M}.\mathcal{K}(j)
			\leq \sum_{l\in \mathbb{N}_L} Z_{g_l}^*(j, \theta)Z_{g_l}(j, \theta) \leq \frac{B}{M}.\mathcal{K}(j).
		\end{equation}
		for all $j \in \frac{M}{q}\) and a.e $\theta \in [0, 1[$.
		\item The inequality $(3)$ holds for all $j \in \mathbb{N}_M$ and a.e $\theta \in [0, 1[$.\\
	\end{enumerate}
\end{theorem}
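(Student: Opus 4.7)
The plan is to bridge $(1)$ and the integrated form of $(3)$ via the single-window Parseval-type identity underlying Lemma \ref{lem2.4} (Lemma~3.1 of \cite{4}). Summed over $l\in\mathbb{N}_L$, it reads
\begin{equation*}
\sum_{m,n,l}|\langle f,E_{m/M}T_{nN}g_l\rangle|^{2}=M\sum_{j\in\mathbb{N}_M}\int_{0}^{1}F(j,\theta)^{*}\Bigl(\sum_{l\in\mathbb{N}_L}Z_{g_l}^{*}(j,\theta)Z_{g_l}(j,\theta)\Bigr)F(j,\theta)\,d\theta.
\end{equation*}
On the other side, Lemma \ref{lem1.1} will give $\|f\|^{2}=\sum_{j\in\mathbb{N}_M}\int_{0}^{1}\|F(j,\theta)\|^{2}\,d\theta$ for every $f\in\ell^{2}(\mathbb{S})$. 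Moreover, $F(j,\theta)_{k}=\overline{z_{pM}f(j+kM,\theta)}$ vanishes whenever $j+kM\notin\mathbb{S}$ (by the $N\mathbb{Z}$-periodicity of $\mathbb{S}$, exactly as in Lemma \ref{lem2.8}), so $\mathcal{K}(j)F(j,\theta)=F(j,\theta)$; combined with Lemma \ref{lem2.9} this turns $\|F(j,\theta)\|^{2}$ into $F(j,\theta)^{*}\mathcal{K}(j)F(j,\theta)$. Thus $(1)$ is equivalent to the integrated matrix inequality
\begin{equation*}
\tfrac{A}{M}\sum_{j\in\mathbb{N}_M}\!\int_{0}^{1}\!F^{*}\mathcal{K}F\,d\theta\leq \sum_{j\in\mathbb{N}_M}\!\int_{0}^{1}\!F^{*}\!\Bigl(\sum_{l}Z_{g_l}^{*}Z_{g_l}\Bigr)\!F\,d\theta\leq \tfrac{B}{M}\sum_{j\in\mathbb{N}_M}\!\int_{0}^{1}\!F^{*}\mathcal{K}F\,d\theta
\end{equation*}
quantified over all $f\in\ell^{2}(\mathbb{S})$.

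To pass to the pointwise statement of $(3)$, I will localize: if $(3)$ failed on a set of positive measure, there would exist $j_0\in\mathbb{N}_M$, a measurable $E\subset[0,1[$ of positive measure, and a measurable $v(\theta)\in\operatorname{range}(\mathcal{K}(j_0))$ with, say, $v(\theta)^{*}(\sum_{l}Z_{g_l}^{*}(j_0,\theta)Z_{g_l}(j_0,\theta)-\tfrac{A}{M}\mathcal{K}(j_0))v(\theta)<0$ on $E$. Setting $F(j,\theta)=\delta_{j,j_0}\chi_{E}(\theta)v(\theta)$ and invoking the converse part of Lemma \ref{lem1.1} produces a nonzero $f\in\ell^{2}(\mathbb{S})$ (the constraint $\mathcal{K}(j_0)v=v$ being exactly what forces $f$ into $\ell^{2}(\mathbb{S})$), contradicting the integrated lower bound; the upper bound is symmetric. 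To extend the resulting pointwise inequality from $\operatorname{range}(\mathcal{K}(j))$ to the whole of $\mathbb{C}^{p}$, Lemma \ref{lem2.8} gives $Z_{g_l}^{*}Z_{g_l}=\mathcal{K}Z_{g_l}^{*}Z_{g_l}\mathcal{K}$, and Lemma \ref{lem2.9} ensures $\mathcal{K}=\mathcal{K}^{2}=\mathcal{K}^{*}$, so for any $v\in\mathbb{C}^{p}$ both $v^{*}(\sum_l Z_{g_l}^{*}Z_{g_l})v$ and $v^{*}\mathcal{K}(j)v$ reduce to their values on $\mathcal{K}(j)v\in\operatorname{range}(\mathcal{K}(j))$. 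The converse $(3)\Rightarrow(1)$ is then a one-line integration against $F$.

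For $(2)\Leftrightarrow(3)$, I will exploit the near-periodicity already proved in Lemma \ref{lem2.2}: the shift $j\mapsto j+k'M$ acts on $Z_{g_l}(j,\theta)$ by right multiplication by a matrix $\tilde P=P_\sigma D$ that is a cyclic column permutation twisted by a diagonal of unit-modulus entries, and acts on $\mathcal{K}(j)$ by the same plain permutation $P_\sigma$; a short computation shows $\tilde P^{*}\mathcal{K}(j)\tilde P=\mathcal{K}(j+k'M)$, so both sides of $(3)$ transform under a common unitary conjugation, preserving the matrix inequality. The shift $j\mapsto j+r'N$ leaves $\mathcal{K}(j)$ invariant by $N\mathbb{Z}$-periodicity and acts on $Z_{g_l}$ by a left-unitary factor that cancels in $Z_{g_l}^{*}Z_{g_l}$. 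By Lemma \ref{lem1.4}, every shift $j\mapsto j+\tfrac{M}{q}s$ decomposes into such $M$- and $N$-moves up to a global phase $e^{2\pi im_0\theta}$ that also cancels, so the inequality at $j+\tfrac{M}{q}s$ is unitarily equivalent to the one at $j$; this extends $(2)$ from $\mathbb{N}_{M/q}$ to all of $\mathbb{N}_M$, while the reverse inclusion is immediate. The delicate step I anticipate is the localization in the second paragraph — namely, checking that the prescribed $F$ is the Zak image of some genuine $f\in\ell^{2}(\mathbb{S})$ — but the $\mathcal{K}$-invariance built into the test vector is precisely what makes Lemma \ref{lem1.1} applicable.
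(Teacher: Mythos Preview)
Your proposal is correct and follows essentially the same route as the paper: the Parseval identity behind Lemma~\ref{lem3.7}, the localization to a single $j_0$ with test vectors in $\operatorname{range}(\mathcal{K}(j_0))$, and the unitary conjugation for $(2)\Leftrightarrow(3)$ are all exactly what the paper does. The only cosmetic differences are that the paper localizes with a \emph{fixed} $x\in\mathbb{C}^p$ and a varying $h\in L^\infty([0,1[)$ (followed by a partition-of-$[0,1[$ contradiction) rather than a measurable $v(\theta)$, and for $(2)\Rightarrow(3)$ it writes out the conjugating unitary $V$ entry-by-entry instead of invoking the column/row permutation structure of Lemma~\ref{lem2.2}.
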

For the proof, we will need the following lemma.
\begin{lemma}\label{lem3.7}
	Denote $L^{\infty}(\mathbb{S}_{pM}\times [0,1[)$ the set of functions $F$ on $\mathbb{S}_{pM}\times [0,1[$ such that for all $j\in \mathbb{S}_{pM}$, $F(j,.)\in L^{\infty}([0,1[)$, and  $\Delta := z_{pM \mid \ell^2(\mathbb{S})}^{-1}\left(L^{\infty}(\mathbb{S}_{pM} \times [0,1[)\right).$ Let $g:=\{g_l\}_{l\in \mathbb{N}_L}\subset \ell^2(\mathbb{S})$. Then the following statements are equivalent:
	\begin{enumerate}
		\item $\mathcal{G}(g,L,M,N)$ is a frame for $\ell^2(\mathbb{S})$ with frame bounds $A\leq B$.
		\item For all  $f \in \Delta$, we have:
		$$
		\begin{array}{rcl}
		\displaystyle{\frac{A}{M} \sum_{j=0}^{M-1} \int_{0}^{1} \|F(j, \theta)\|^2  d\theta} &\leq& \displaystyle{\sum_{l=0}^{l-1}\sum_{j=0}^{M-1} \int_{0}^{1} \| Z_{g_l}(j, \theta) F(j, \theta)\|^2  d\theta}\\
		& \leq& \displaystyle{\frac{B}{M} \sum_{j=0}^{M-1} \int_{0}^{1} \|F(j, \theta)\|^2 d\theta.}
		\end{array}$$
		Where $F(j,\theta)$, for all $j\in \mathbb{N}_M$ and a.e $\theta \in [0,1[$, is as defined in lemma \ref{lem2.4}.
	\end{enumerate}
\end{lemma}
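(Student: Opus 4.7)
The plan is to transport the frame inequality on $\ell^2(\mathbb{S})$ to the Zak domain, windowwise. Since $z_{pM}$ is unitary from $\ell^2(\mathbb{S})$ onto $\ell^2(\mathbb{S}_{pM}\times[0,1[)$ by Lemma~\ref{lem1.1}, I can rewrite both the norm $\|f\|^2$ and the analysis quantity $\sum_{l,m,n}|\langle f,E_{m/M}T_{nN}g_l\rangle|^2$ as integrals of $L^2$-norms involving $F(j,\theta)$ and the blocks $Z_{g_l}(j,\theta)$. Summing a single-window identity over $l\in\mathbb{N}_L$ should produce the middle member of statement (2), while the two outer terms will collapse to scalar multiples of $\|f\|^2$, making the frame inequality equivalent to (2) on the dense subspace $\Delta$.

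The first step is the rewriting of $\|f\|^2$: unitarity of $z_{pM}$ combined with the fact that $z_{pM}f(n,\cdot)=0$ for $n\notin\mathbb{S}$ (a consequence of $pM=qN$ and the $N\mathbb{Z}$-periodicity of $\mathbb{S}$) gives
\[
\|f\|^{2} \;=\; \sum_{n=0}^{pM-1}\int_0^1 |z_{pM}f(n,\theta)|^{2}\,d\theta \;=\; \sum_{j=0}^{M-1}\int_0^1 \|F(j,\theta)\|^{2}\,d\theta
\]
after regrouping $n=j+kM$ with $j\in\mathbb{N}_M$, $k\in\mathbb{N}_p$. The second, more technical step is the windowwise Moyal-type identity
\[
\sum_{m\in\mathbb{N}_M}\sum_{n\in\mathbb{Z}}\bigl|\langle f,E_{m/M}T_{nN}g_l\rangle\bigr|^{2} \;=\; M\sum_{j=0}^{M-1}\int_0^1 \|Z_{g_l}(j,\theta)F(j,\theta)\|^{2}\,d\theta
\]
valid for $f\in\Delta$ and each $l\in\mathbb{N}_L$. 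This I would establish by direct computation: insert the definitions of the modulation/translation operators and of $z_{pM}$, apply discrete Parseval on $m\in\mathbb{N}_M$ to collapse the modulation sum (producing the prefactor $M$), and then apply Parseval on $[0,1[$ to collapse the translation sum in $n$ into the squared norm of the matrix-vector product. Summing over $l$ and substituting both identities into $A\|f\|^{2}\leq\sum_{l,m,n}|\langle f,\cdot\rangle|^{2}\leq B\|f\|^{2}$, then dividing by $M$, yields (2) for every $f\in\Delta$, establishing $(1)\Rightarrow(2)$.

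For the converse, I would use that $L^{\infty}(\mathbb{S}_{pM}\times[0,1[)$ is dense in $\ell^2(\mathbb{S}_{pM}\times[0,1[)$, since the underlying measure space is finite; by unitarity of $z_{pM}$, the set $\Delta$ is then dense in $\ell^2(\mathbb{S})$. Combining (2) with the two identities above returns the frame inequality on $\Delta$; the upper (Bessel) bound extends automatically to all of $\ell^2(\mathbb{S})$, and the lower bound extends by approximating $f\in\ell^2(\mathbb{S})$ by $f_n\in\Delta$ and passing to the limit, using continuity of the norm and of the (now bounded) analysis operator. The main obstacle I anticipate is pinning down the exact constant $M$ in the windowwise Moyal identity, since the paper explicitly invokes only a single-window orthogonality result from \cite{4} in Lemma~\ref{lem2.4}; the frame-norm version needed here must either be extracted from the same reference or carried out by hand via the Parseval computation sketched above. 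Everything else reduces to rearrangement of summations and a routine density/continuity argument.
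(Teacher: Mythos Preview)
Your proposal is correct and follows essentially the same route as the paper: the paper also reduces to the dense subspace $\Delta$ via unitarity of $z_{pM}$, then establishes exactly your two identities---$\|f\|^{2}=\sum_{j}\int_0^1\|F(j,\theta)\|^{2}d\theta$ and the windowwise Moyal identity with constant $M$---by the same Parseval-on-$m$ then Parseval-on-$n$ computation you outline. The only cosmetic difference is that the paper dispatches both implications at once by asserting upfront that the frame inequality on $\ell^2(\mathbb{S})$ is equivalent to the frame inequality on the dense set $\Delta$, rather than separating $(1)\Rightarrow(2)$ and $(2)\Rightarrow(1)$ as you do.
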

\begin{proof}
	By density of $L^{\infty}(\mathbb{S}_{pM}\times [0,1[)$ in $L^2(\mathbb{S}_{pM}\times [0,1[)$ and by the unitarity of $z_{pM}$ from $\ell^2(\mathbb{S})$ onto $L^2(\mathbb{S}_{pM}\times [0,1[)$ (Since $pM=qN$, then $\mathbb{S}$ is $qN\mathbb{Z}$-periodic in $\mathbb{Z}$), $\Delta$ is dense in $\ell^2(\mathbb{S})$. Hence $\mathcal{G}(g,L,M,N)$ is a frame for $\ell^2(\mathbb{S})$ with frame bounds $A\leq B$ if and only if for all $f\in \Delta$, $A\|f\|^2\leq \displaystyle{\sum_{l=0}^{L-1}\sum_{n\in \mathbb{Z}}\sum_{m=0}^{M-1}\left\vert \langle f, e^{2\pi i \frac{m}{M}.}g_l(.-nN)\rangle\right\vert^2}\leq B\|f\|^2.$
	Let $f\in \Delta$, it is clear that  for all $r\in \mathbb{N}_q$, $Z_{g_l}(j,.)F(j,.)_r\in L^2([0,1[)$. 
	We have: $$\begin{array}{rcl}
		&&\displaystyle{\sum_{l=0}^{L-1}\sum_{n\in \mathbb{Z}}\sum_{m=0}^{M-1}\left\vert \langle f, e^{2\pi i \frac{m}{M}.}g_l(.-nN)\rangle\right\vert^2}\\
		&=&\displaystyle{\sum_{l=0}^{L-1}\sum_{r=0}^{q-1}\sum_{n\in \mathbb{Z}}\sum_{m=0}^{M-1}\left\vert \langle f, e^{2\pi i \frac{m}{M}.}g_l(.-(r+nq)N)\rangle\right\vert^2}\\
		&=&\displaystyle{\sum_{l=0}^{L-1}\sum_{r=0}^{q-1}\sum_{n\in \mathbb{Z}}\sum_{m=0}^{M-1}\left\vert \langle z_{pM}f, z_{pM}\left(e^{2\pi i \frac{m}{M}.}g_l(.-(r+nq)N)\right)\rangle\right\vert^2}\;\;\; \text{ by unitarity of }z_{pM}\\
		&=&\displaystyle{\sum_{l=0}^{L-1}\sum_{r=0}^{q-1}\sum_{n\in \mathbb{Z}}\sum_{m=0}^{M-1}\left\vert \langle z_{pM}f, z_{pM}\left(e^{2\pi i \frac{m}{M}.}g_l(.-rN-npM)\right)\rangle\right\vert^2} \;\;\; \text{ since }qN=pM.
	\end{array}$$
	By a simple calculation, we obtain that for all $j\in \mathbb{Z}$ and a.e $\theta\in \mathbb{R}$: 
	$$z_{pM}\left(e^{2\pi i \frac{m}{M}.}g_l(.-rN-npM)\right)(j,\theta)=z_{pM}g_l(j-rN,\theta)\, e^{2\pi i n\theta}\,e^{2\pi i \frac{m}{M}j}.$$
	Then: $$\begin{array}{rcl}
		&&\langle z_{pM}f, z_{pM}\left(e^{2\pi i \frac{m}{M}.}g_l(.-rN-npM)\right)\rangle\\
		&=&\displaystyle{\sum_{j=0}^{pM-1} \int_{0}^1 z_{pM}f(j,\theta)\, \overline{z_{pM}g_l(j-rN,\theta)}e^{-2\pi i n\theta}\, d\theta.\,e^{-2\pi i \frac{m}{M}j}}\\
		&=&\displaystyle{\sum_{j=0}^{M-1}\sum_{k=0}^{p-1} \int_{0}^1 z_{pM}f(j+kM,\theta)\, \overline{z_{pM}g_l(j+kM-rN,\theta)}e^{-2\pi i n\theta}\, d\theta.\,e^{-2\pi i \frac{m}{M}j}}\\
		&=&\displaystyle{\sum_{j=0}^{M-1}\int_{0}^1 \overline{Z_{g_l}(j,\theta)F(j,\theta)_{r}}\,e^{-2\pi i n\theta}d\theta.\, e^{-2\pi i\frac{m}{M}j}}\\
		&=& \displaystyle{\sum_{j=0}^{M-1}T(j).\, e^{-2\pi i\frac{m}{M}j}} \:\:\: \text{ where } T(j)=\displaystyle{\int_{0}^1 \overline{Z_{g_l}(j,\theta)F(j,\theta)_{r}}\,e^{-2\pi i n\theta}d\theta}\\
	\end{array}$$
	Observe that $T$ is $M$-periodic. Since $\{\displaystyle{\frac{1}{\sqrt{M}}e^{2\pi i \frac{m}{M}.}}\}_{m\in \mathbb{N}_M}$ is an orthonormal basis for $\ell^2(\mathbb{N}_M)$; the space of $M$-periodic sequences, then we have:$$\begin{array}{rcl}
		&&\displaystyle{\sum_{m=0}^{M-1}\left\vert \langle z_{pM}f, z_{pM}\left(e^{2\pi i \frac{m}{M}.}g_l(.-rN-npM)\right)\rangle \right\vert^2}\\
		&=& \displaystyle{\sum_{m=0}^{M-1}\left \vert \langle T, e^{2\pi i \frac{m}{M}.}\rangle \right\vert^2}\\
		&=& M\|T\|^2\\
		&=&\displaystyle{M\sum_{j=0}^{M-1}\left\vert\int_0^1 \overline{Z_{g_l}(j,\theta)F(j,\theta)_{r}}\,e^{-2\pi i n\theta}d\theta \right\vert^2}.\\
	\end{array}$$
	Since $\{e^{2\pi i n\theta}\}_{n\in \mathbb{Z}}$ is an orthonormal basis for $L^2([0,1[)$, then we have: 
	$$\begin{array}{rcl}
		&&\displaystyle{\sum_{n\in \mathbb{Z}}\sum_{m=0}^{M-1}\left\vert \langle z_{pM}f, z_{pM}\left(e^{2\pi i \frac{m}{M}.}g_l(.-rN-npM)\right)\rangle \right\vert^2}\\
		&=&\displaystyle{M\sum_{j=0}^{M-1}\sum_{n\in \mathbb{Z}}\left\vert\int_0^1 \overline{Z_{g_l}(j,\theta)F(j,\theta)_{r}}\,e^{-2\pi i n\theta}d\theta \right\vert^2}.\\
		&=&\displaystyle{M\sum_{j=0}^{M-1}\sum_{n\in \mathbb{Z}}\left\vert \langle \overline{Z_{g_l}(j,.)F(j,.)_{r}},e^{2\pi i n.}\rangle \right\vert^2}.\\
		&=&\displaystyle{M\sum_{j=0}^{M-1}\left\| \overline{Z_{g_l}(j,.)F(j,.)_r}\right\|^2}\\
		&=&\displaystyle{M\sum_{j=0}^{M-1}\int_0^1 \left\vert Z_{g_l}(j,\theta)F(j,\theta)_r\right\vert^2\,d\theta}\\
	\end{array}$$
	Hence: $$\begin{array}{rcl}
		&&\displaystyle{\sum_{r=0}^{q-1}\sum_{n\in \mathbb{Z}}\sum_{m=0}^{M-1}\left\vert \langle z_{pM}f, z_{pM}\left(e^{2\pi i \frac{m}{M}.}g_l(.-rN-npM)\right)\rangle \right\vert^2}\\
		&=&\displaystyle{M\sum_{j=0}^{M-1}\int_0^1 \sum_{r=0}^{q-1}\left\vert Z_{g_l}(j,\theta)F(j,\theta)_r\right\vert^2\,d\theta}\\
		&=&\displaystyle{M\sum_{j=0}^{M-1}\int_0^1 \left\| Z_{g_l}(j,\theta)F(j,\theta)\right\|^2\,d\theta}.\\
	\end{array}$$
	The norm in the last line is the 2-norm in $\mathbb{C}^q$. Thus:
	 $$\begin{array}{rcl}
	 &&\displaystyle{\sum_{r=0}^{q-1}\sum_{n\in \mathbb{Z}}\sum_{m=0}^{M-1}\left\vert \langle z_{pM}f, z_{pM}\left(e^{2\pi i \frac{m}{M}.}g_l(.-rN-npM)\right)\rangle \right\vert^2}\\ &=&\displaystyle{M\sum_{l=0}^{L-1}\sum_{j=0}^{M-1}\int_0^1 \left\| Z_{g_l}(j,\theta)F(j,\theta)\right\|^2\,d\theta}.
	 \end{array}$$
	Hence: \begin{equation}
		\displaystyle{\sum_{l=0}^{L-1}\sum_{n\in \mathbb{Z}}\sum_{m=0}^{M-1}\left\vert \langle f, e^{2\pi i \frac{m}{M}.}g_l(.-nN)\rangle\right\vert^2}=\displaystyle{M\sum_{l=0}^{L-1}\sum_{j=0}^{M-1}\int_0^1 \left\| Z_{g_l}(j,\theta)F(j,\theta)\right\|^2\,d\theta}.
	\end{equation}
	On the other hand, we have by unitarity of $z_{pM}$:$$\begin{array}{rcl}
		\|f\|^2&=&\|z_{pM}f\|^2\\
		&=&\displaystyle{\sum_{j=0}^{pM-1}\int_0^1\vert z_{pM}f(j,\theta)\vert^2\, d\theta}\\
		&=&\displaystyle{\sum_{j=0}^{M-1}\sum_{k=0}^{p-1}\int_0^1 \vert z_{pM}(j+kM,\theta)\vert^2\, d\theta}\\
		&=&\displaystyle{\sum_{j=0}^{M-1}\int_0^1 \sum_{k=0}^{p-1} \vert F(j,\theta)_k\vert^2\, d\theta}\\
		&=&\displaystyle{\sum_{j=0}^{M-1}\int_0^1 \| F(j,\theta)\|^2\, d\theta}\\
	\end{array}$$
	The norm in the last line is the 2-norm in $\mathbb{C}^p$. Thus: \begin{equation}
		\|f\|^2=\displaystyle{\sum_{j=0}^{M-1}\int_0^1 \| F(j,\theta)\|^2\, d\theta}.
	\end{equation}
	Then, combining $(5)$ and $(6)$, the proof is completed.
\end{proof}

\begin{proof}[Proof of Theorem \ref{prop3.6}]\hspace{1cm}
	\begin{enumerate}
		\item[]$(1)\Longrightarrow (3)$: Assume that $\mathcal{G}(g,L,M,N)$ is a frame for $\ell^2(\mathbb{S})$. Then for all $f\in \Delta$, \begin{equation*}
\begin{array}{rcl}
\displaystyle{\frac{A}{M} \sum_{j=0}^{M-1} \int_{0}^{1} \|F(j, \theta)\|^2 \, d\theta} &\leq& \displaystyle{\sum_{l=0}^{l-1}\sum_{j=0}^{M-1} \int_{0}^{1} \| Z_{g_l}(j, \theta) F(j, \theta)\|^2 \, d\theta } \\
&\leq& \displaystyle{\frac{B}{M} \sum_{j=0}^{M-1} \int_{0}^{1} \|F(j, \theta)\|^2 \, d\theta.}
		\end{array}	
		\end{equation*}
		Fix $x:=\{x_k\}_{k\in \mathbb{N}_p}\in  \mathbb{C}^p$, $j_0\in \mathbb{N}_M$ and $h\in L^{\infty}([0,1[)$, and define for all $j\in \mathbb{N}_M$ and a.e $\theta\in [0,1[$, $F(j,\theta):=\{\delta_{j,j_0}\,\chi_{\mathcal{K}_j}(k)\,x_k\, h(\theta) \}_{k\in \mathbb{N}_p}$. \\Then: $\displaystyle{ \sum_{j=0}^{M-1} \int_{0}^{1} \|F(j, \theta)\|^2 \, d\theta=\|\mathcal{K}(j_0)x\|^2\int_0^1 \vert h(\theta)\vert^2 \,d\theta=}$\\ $\displaystyle{\langle \mathcal{K}(j_0)x,x\rangle \int_0^1 \vert h(\theta)\vert^2\,d\theta}$ (lemma \ref{lem2.9}). On the other hand, we have: $$
		\begin{array}{rcl}
			\displaystyle{\sum_{l=0}^{l-1}\sum_{j=0}^{M-1} \int_{0}^{1} \| Z_{g_l}(j, \theta) F(j, \theta)\|^2 \, d\theta }&=&\displaystyle{\sum_{l=0}^{l-1} \int_{0}^{1} \| Z_{g_l}(j_0, \theta) F(j_0, \theta)\|^2 \, d\theta} \\
			&=&\displaystyle{\sum_{l=0}^{l-1} \int_{0}^{1} \sum_{r=0}^{q-1} \left\vert \left(Z_{g_l}(j_0, \theta) F(j_0, \theta)\right)_r \,\right\vert^2 \, d\theta} \\
			&=&\displaystyle{\sum_{l=0}^{l-1} \int_{0}^{1} \sum_{r=0}^{q-1} \left\vert \sum_{k=0}^{p-1} Z_{g_l}(j_0, \theta)_{r,k} F(j_0, \theta)_k \,\right\vert^2 \, d\theta} \\
			&=&\displaystyle{\sum_{l=0}^{l-1} \int_{0}^{1} \sum_{r=0}^{q-1} \left\vert \sum_{k=0}^{p-1} Z_{g_l}(j_0, \theta)_{r,k} \,\chi_{\mathcal{K}_{j_0}}(k)\,x_k\, h(\theta)\,\right\vert^2 d\theta }\\
			&=&\displaystyle{\sum_{l=0}^{l-1} \int_{0}^{1} \sum_{r=0}^{q-1}\left \vert \sum_{k=0}^{p-1} Z_{g_l}(j_0, \theta)_{r,k} (\mathcal{K}(j_0)x)_k \right\vert^2 \vert h(\theta)\vert^2 d\theta} \\
			&=&\displaystyle{\sum_{l=0}^{l-1} \int_{0}^{1} \sum_{r=0}^{q-1} \left\vert \left(Z_{g_l}(j_0, \theta)\mathcal{K}(j_0)x\right)_r \right\vert^2 \vert h(\theta)\vert^2 d\theta}\\
			&=&\displaystyle{\sum_{l=0}^{l-1} \int_{0}^{1} \left\|Z_{g_l}(j_0,\theta)\mathcal{K}(j_0)x \right\|^2 \vert h(\theta)\vert^2 d\theta} \\
			&=&\displaystyle{\sum_{l=0}^{l-1} \int_{0}^{1} \left\|Z_{g_l}(j_0,\theta)x \,\right\|^2 \vert h(\theta)\vert^2  d\theta}\;\;\; \text{ by lemma }\ref{lem2.8} \\
			&=&\displaystyle{\int_{0}^{1} \left\langle \sum_{l=0}^{l-1} Z_{g_l}(j_0,\theta)^*Z_{g_l}(j_0,\theta)x,x\right\rangle \vert h(\theta)\vert^2 d\theta}. \\
		\end{array}$$
		Then for all $j\in \mathbb{N}_M$,  $x\in \mathbb{C}^p$ and $h\in L^{\infty}([0,1[)$, we have: 
		\begin{equation}
		\begin{array}{rcl}
\displaystyle{\frac{A}{M}.\langle \mathcal{K}(j)x,x\rangle \,\int_0^1\vert h(\theta)\vert^2 \,d\theta} &\leq& \displaystyle{\int_{0}^{1} \left\langle \sum_{l=0}^{l-1} Z_{g_l}(j,\theta)^*Z_{g_l}(j,\theta)x,x\right\rangle \,\vert h(\theta)\vert^2 \, d\theta} \\
&\leq& \displaystyle{\frac{B}{M}.\langle \mathcal{K}(j)x,x\rangle \int_0^1\vert h(\theta)\vert^2\, d\theta.}
	\end{array}
		\end{equation}
		For $j\in \mathbb{N}_M$ and $x\in \mathbb{C}^p$ fixed, denote $C=\displaystyle{\frac{A}{M}.\langle \mathcal{K}(j)x,x\rangle}$ and $D=\displaystyle{\frac{B}{M}.\langle \mathcal{K}(j)x,x\rangle}$. Assume, by contradiction, that \begin{equation} C>\displaystyle{\left\langle \sum_{l=0}^{L-1}Z_{g_l}(j,\theta)^*Z_{g_l}(j,\theta)x,x\right\rangle} ,
		\end{equation} on a subset of $[0,1[$ with a positive measure. Denote 
		$D=\{\theta \in [0,1[:\; (8) \text{ holds}\}.$\\
		For all $k\in \mathbb{N}$, denote: $$D_k:=\left\{\theta\in [0,1[:\; C-\displaystyle{\frac{C}{k+1}}\leq \displaystyle{\langle \sum_{l=0}^{L-1}Z_{g_l}(j,\theta)^*Z_{g_l}(j,\theta)x,x\rangle} \leq C-\displaystyle{\frac{C}{k}}\right\}.$$
		It is clear that $\{D_k\}_{k\in \mathbb{N}}$ forms a partition for $D$. Since $mes(D)>0$, then there exists $k\in \mathbb{N}$ such that $mes(D_k)>0$. Let $h:=\chi_{D_k}$, we have: $$\begin{array}{rcl}
			\displaystyle{\int_{0}^{1} \left\langle \sum_{l=0}^{l-1} Z_{g_l}(j,\theta)^*Z_{g_l}(j,\theta)x,x\right\rangle \,\vert h(\theta)\vert^2 \, d\theta}&=&\displaystyle{\int_{D_k} \left\langle \sum_{l=0}^{l-1} Z_{g_l}(j,\theta)^*Z_{g_l}(j,\theta)x,x\right\rangle d\theta}\\
			&\leq&(C-\displaystyle{\frac{C}{k}}). mes(D_k)\\
			&<&C.mes(D_k)\\
			&=&C\displaystyle{\int_0^1 \vert h(\theta)\vert^2\,d\theta}.
		\end{array}$$
	 Contradiction with $(7)$. 
		Suppose, again by contradiction, that \begin{equation} D<\displaystyle{\left\langle \sum_{l=0}^{L-1}Z_{g_l}(j,\theta)^*Z_{g_l}(j,\theta)x,x\right\rangle}, 
		\end{equation} on a subset of $[0,1[$ with a positive measure. Denote 
		$D'=\{\theta \in [0,1[:\; (9) \text{ holds}\}.$\\
		For all $k\in \mathbb{N}$, $m\in \mathbb{N}$, denote $$D_{k,m}':=\left\{\theta\in [0,1[:\; D(k+\displaystyle{\frac{1}{m+1}})\leq \displaystyle{\langle \sum_{l=0}^{L-1}Z_{g_l}(j,\theta)^*Z_{g_l}(j,\theta)x,x\rangle} \leq D(k+\displaystyle{\frac{1}{m}})\right\}.$$
		It is clear that $\{D_{k,m}'\}_{k\in \mathbb{N}}$ forms a partition for $D'$. Since $mes(D')>0$, then there exist $k\in \mathbb{N}$ and $m\in \mathbb{N}$ such that $mes(D_{k,m}')>0$. Let $h:=\chi_{D_{k,m}'}$, we have: $$\begin{array}{rcl}
			\displaystyle{\int_{0}^{1} \langle \sum_{l=0}^{l-1} Z_{g_l}(j,\theta)^*Z_{g_l}(j,\theta)x,x\rangle \,\vert h(\theta)\vert^2 \, d\theta}&=&\displaystyle{\int_{D_{k,m}'} \langle \sum_{l=0}^{l-1} Z_{g_l}(j,\theta)^*Z_{g_l}(j,\theta)x,x\rangle \, d\theta}\\
			&\geqslant&D(k+\displaystyle{\frac{1}{m+1}}). mes(D_{k,m}')\\
			&>&D.mes(D_{k,m}')\\
			&=&D\displaystyle{\int_0^1 \vert h(\theta)\vert^2\,d\theta}.
		\end{array}$$
		Contradiction with $(7)$. Hence for all $j\in \mathbb{N}_M$, and a.e $\theta \in [0,1[$, we have: 
		\begin{equation*}
			\frac{A}{M}.\mathcal{K}(j)
			\leq \sum_{l\in \mathbb{N}_L} Z_{g_l}^*(j, \theta)Z_{g_l}(j, \theta) \leq \frac{B}{M}.\mathcal{K}(j).
		\end{equation*}
		\item[]$(3)\Longrightarrow(1)$: Assume $(3)$. For $f\in \Delta$, $j\in \mathbb{N}_M$ and a.e $\theta\in [0,1[$,  the $k$-th component of $F(j,\theta)$ is zero if $k\notin \mathcal{K}_j$. Then $\mathcal{K}(j)F(j,\theta)=F(j,\theta)$. Then: \begin{equation*}
			\frac{A}{M}\|F(j,\theta)\|^2\leq \left \langle \sum_{l=0}^{L-1} Z_{g_l}(j,\theta)^*Z_{g_l}(j,\theta)F(j,\theta),F(j,\theta)\right \rangle\leq \frac{B}{M}\|F(j,\theta)\|^2.
		\end{equation*}
		Hence: \begin{equation*}
			\frac{A}{M}\sum_{j=0}^{M-1}\int_0^1\|F(j,\theta)\|^2\leq \sum_{l=0}^{L-1}\sum_{j=0}^{M-1}\int_0^1 \| Z_{g_l}(j,\theta)F(j,\theta)\|^2\leq \frac{B}{M}\sum_{j=0}^{M-1}\int_0^1\|F(j,\theta)\|^2.
		\end{equation*}
		Then lemma \ref{lem3.7} implies $(1)$.
		\item[]$(3)\Longrightarrow (2)$: Since $\mathbb{N}_{\frac{M}{q}}\subset \mathbb{N}_M$.
		\item[]$(2)\Longrightarrow (3)$: Assume $(2)$. Then the inequality $(3)$ holds for all $ j \in \mathbb{N}_{\frac{M}{q}}$ and a.e. $ \theta \in [0, 1[$. Let's prove that it  holds for all $ j \in \mathbb{N}_M$ and a.e. $\theta \in [0, 1[$. Let $j \in \mathbb{N}_M$. By Lemma \ref{lem2.5}, there exists a unique $(j', r', k', \ell') \in \mathbb{N}_{\frac{M}{q}} \times \mathbb{N}_q \times \mathbb{N}_p \times \mathbb{Z}$ such that $j = j' + k'M - r'N + \ell'qN$. Then, by the
		quasiperiodicity of the discrete Zak transform, we have, for all $l\in \mathbb{N}_L$, after a simple calculation:
		
		$(Z_{g_l}(j, \theta)^*Z_{g_l}(j, \theta))_{k_1, k_2} =$\\
		$\displaystyle{\sum_{r=0}^{q-1}
		(\overline{Z_{qN} g_l) (j' + (k_1 + k')M - (r + r')N, \theta)} (Z_{qN}g) (j' + (k_2 + k')M - (r + r')N, \theta)}$
		\begin{equation*}
			=
			\begin{cases} 
				(Z_{g_l}(j', \theta)^*Z_{g_l}(j', \theta))_{k_1+k', k_2+k'}, & \text{if } k_1 + k' < p, \; k_2 + k' < p \\
				e^{-2\pi i\theta} (Z_{g_l}(j', \theta)^*Z_{g_l}(j', \theta))_{k_1+k', k_2+k'-p}, & \text{if } k_1 + k' < p, \; k_2 + k' \geq p \\
				e^{2\pi i\theta} (Z_{g_l}(j', \theta)^*Z_{g_l}(j', \theta))_{k_1+k'-p, k_2+k'}, & \text{if } k_1 + k' \geq p, \; k_2 + k' < p \\
				(Z_{g_l}(j', \theta)^*Z_{g_l}(j', \theta))_{k_1+k'-p, k_2+k'-p}, & \text{if } k_1 + k' \geq p, \; k_2 + k' \geq p.
			\end{cases}
		\end{equation*}
		for $k_1, k_2 \in \mathbb{N}_p$ and a.e. $\theta \in [0, 1[$.\\
		Define \( V: \mathbb{C}^p \to \mathbb{C}^p \) by $Vx = y = (y_0, y_1, \ldots, y_{p-1})^t$, where:
		
		$$
		y_k =
		\begin{cases} 
			e^{-2\pi i \theta}x_{k-k'+p}, & \text{if } 0 \leq k < k' \\
			x_{k-k'}, & \text{if } k' \leq k < p,
		\end{cases}
		$$
		for $x \in \mathbb{C}^p$. Then $V$ is a unitary operator, and
		
		$$
		\langle Z_{g_l}(j, \theta)^*Z_{g_l}(j, \theta)x, x \rangle = \langle Z_{g_l}(j', \theta)^*Z_{g_l}(j', \theta)Vx, Vx \rangle,
		$$
		for a.e. $\theta \in [0, 1[$ and all $x \in \mathbb{C}^p$. Then, by $(2)$, we have:
		
		\begin{equation}
			\frac{A}{M} \langle V^*\mathcal{K}(j') Vx, x \rangle
			\leq \left\langle \sum_{l=0}^{L-1}Z_{g_l}(j, \theta)^*Z_{g_l}(j, \theta)x, x \right\rangle
			\leq \frac{B}{M}\langle V^*\mathcal{K}(j') Vx, x \rangle,
		\end{equation}
		for a.e. \( \theta \in [0, 1) \) and each $x \in \mathbb{C}^p$. When $k+k' < p$, $k+k' \in \mathcal{K}_{j'}$ if and only if $j'+(k+k')M \in\mathbb{S}$,  equivalently, $j' + k'M - r'N + \ell'qN + kM \in\mathbb{S}$, i.e. $j + kM \in \mathbb{S}$. Therefore, $k + k' \in \mathcal{K}_j$ if and only if $k \in \mathcal{K}_j$ when $k + k' < p$. Similarly, $k + k' - p \in \mathcal{K}_{j'}$ if and only if $k \in \mathcal{K}_j$ when
		$k + k' \geq p$. It follows that:
		$$
		V^*\mathcal{K}(j')V = \mathcal{K}(j).
		$$
		Which together with $(10)$ gives $(3)$. The proof is completed.
	\end{enumerate}
\end{proof}

\begin{remark}
	In the case of $\mathbb{S}=\mathbb{Z}$. For all $j\in \mathbb{N}_{\frac{M}{q}}$, $\mathcal{K}_j=\mathbb{N}_p$. Then the condition $(2)$ in the Theorem \ref{prop3.5} is equivalent to $rank(Z_g(j,\theta)\,)=p$ for all $j\in \mathbb{N}_{\frac{M}{q}}$ and a.e $\theta \in [0,1[$. And the condition $(2)$ in the Theorem $\ref{prop3.6}$ is equivalent to: For all $j\in \mathbb{N}_{\frac{M}{q}}$ and a.e $\theta \in [0,1[$, 
	\begin{equation*}
		\frac{A}{M}.I_{p,p}
		\leq \sum_{l\in \mathbb{N}_L} Z_{g_l}^*(j, \theta)Z_{g_l}(j, \theta) \leq \frac{B}{M}.I_{p,p}.
	\end{equation*}
	
	Where $I_{p,p}$ is the identity matrix in $\mathcal{M}_{p,p}$.
		
\end{remark}
\section{Admissibility conditions for a complete multiwindow Gabor system and a multiwindow Gabor frame}

Note that, in what follows, we will use all the notations already introduced Without introducing them again. In this section, we study conditions for a periodic set $\mathbb{S}$ to admit a complete multi-window Gabor system, and a multi-window Gabor frame. Let $L,M,N\in \mathbb{N}$ and $p,q\in \mathbb{N}$ such that  $\displaystyle{\frac{N}{M} = \frac{p}{q}}$  and $p\wedge q=1$.

In what follows, we give some useful lemmas for the rest.
\begin{lemma}\label{lem4.1}\hspace{1cm}
	\begin{enumerate}
		\item $card(\mathcal{K}_j)\leq \,qL$ for all $j\in \mathbb{N}_{\frac{M}{q}}$ $\Longrightarrow$ $card(\mathbb{S}_N)\leq\, LM$.
		\item Assume that $card(\mathcal{K}_j)\leq \,qL$ for all $j\in \mathbb{N}_{\frac{M}{q}}$. Then:
		$$card(\mathcal{K}_j)=qL \text{ for all }j\in \mathbb{N}_{\frac{M}{q}}\;\Longleftrightarrow \;card(\mathbb{S}_N)=\, LM.$$
	\end{enumerate}
\end{lemma}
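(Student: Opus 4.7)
The plan is to reduce both statements to the single identity
\[
\sum_{j\in\mathbb{N}_{M/q}}\operatorname{card}(\mathcal{K}_j)=\operatorname{card}(\mathbb{S}_N),
\]
which already sits implicitly inside Remark \ref{rem1}. First I would check that $M/q$ is a positive integer: since $qN=pM$ and $p\wedge q=1$, we get $q\mid M$, so the index set $\mathbb{N}_{M/q}$ is well defined and finite.

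Next, starting from the formula obtained at the end of Remark \ref{rem1},
\[
\operatorname{card}(\mathcal{K}_j)=\sum_{n\in\mathbb{Z}}\chi_{\mathbb{S}_N}\!\left(j+\frac{M}{q}n\right),
\]
I would sum over $j\in\mathbb{N}_{M/q}$. Because $\{j+\tfrac{M}{q}n:\, j\in\mathbb{N}_{M/q},\,n\in\mathbb{Z}\}$ is a partition of $\mathbb{Z}$ (each integer has a unique Euclidean decomposition modulo $M/q$), switching the order of summation collapses the double sum into $\sum_{m\in\mathbb{Z}}\chi_{\mathbb{S}_N}(m)=\operatorname{card}(\mathbb{S}_N)$, since $\mathbb{S}_N\subset\mathbb{N}_N$ is a finite set. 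This proves the identity displayed above.

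From this identity, part (1) is immediate: if $\operatorname{card}(\mathcal{K}_j)\leq qL$ for every $j\in\mathbb{N}_{M/q}$, then
\[
\operatorname{card}(\mathbb{S}_N)=\sum_{j\in\mathbb{N}_{M/q}}\operatorname{card}(\mathcal{K}_j)\leq \frac{M}{q}\cdot qL=LM.
\]
For part (2), working under the standing bound $\operatorname{card}(\mathcal{K}_j)\leq qL$, the forward implication follows by plugging $\operatorname{card}(\mathcal{K}_j)=qL$ into the identity, giving $\operatorname{card}(\mathbb{S}_N)=LM$. For the converse, if $\operatorname{card}(\mathbb{S}_N)=LM$, then the sum of the $M/q$ nonnegative numbers $\operatorname{card}(\mathcal{K}_j)$, each bounded above by $qL$, equals the maximum value $LM=(M/q)\cdot qL$; elementary extremality forces every summand to attain the upper bound $qL$.

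I do not expect any serious obstacle: the only subtlety is making the partition argument $\mathbb{Z}=\bigsqcup_{j\in\mathbb{N}_{M/q}}(j+\tfrac{M}{q}\mathbb{Z})$ explicit and invoking the integrality $M/q\in\mathbb{N}$ before using it as a sum index. Everything else is a one-line counting manipulation built on Remark \ref{rem1}.
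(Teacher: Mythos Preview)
Your proposal is correct and follows essentially the same route as the paper: both arguments rest on the identity $\sum_{j\in\mathbb{N}_{M/q}}\operatorname{card}(\mathcal{K}_j)=\operatorname{card}(\mathbb{S}_N)$ obtained from Remark~\ref{rem1} via the partition $\mathbb{Z}=\bigsqcup_{j\in\mathbb{N}_{M/q}}(j+\tfrac{M}{q}\mathbb{Z})$, and then deduce (1) and both directions of (2) by the same elementary counting and extremality reasoning you describe.
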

\begin{proof}\hspace{1cm}
	\begin{enumerate}
		\item Assume  that $card(\mathcal{K}_j)\leq \,qL$ for all $j\in \mathbb{N}_{\frac{M}{q}}$.  We have: 
		$$\begin{array}{rcl}
			card(\mathbb{S}_N)&=&\displaystyle{\sum_{j\in \mathbb{Z}}\chi_{\mathbb{S}_N}(j)}\\
			&=&\displaystyle{\sum_{j\in \mathbb{N}_{\frac{M}{q}}}\sum_{n\in \mathbb{Z}}\chi_{\mathbb{S}_N}(j+\displaystyle{\frac{M}{q}}n)}\\
			&=&\displaystyle{\sum_{j\in \mathbb{N}_{\frac{M}{q}}}card(\mathcal{K}_j)}\;\;\; \text{ by remark }\ref{rem1}\\
			&\leq& \displaystyle{\frac{M}{q}.qL=LM}.
		\end{array}$$
		\item Assume that $card(\mathcal{K}_j)\leq qL$ for all $j\in \mathbb{N}_{\frac{M}{q}}$.\\
		Assume that $card(\mathcal{K}_j)=qL$ for all $j\in \mathbb{N}_{\frac{M}{q}}$. Then by the proof of (1), we have: $$\begin{array}{rcl}
			card(\mathbb{S}_N)&=&\displaystyle{\sum_{j\in \mathbb{N}_{\frac{M}{q}}}card(\mathcal{K}_j)}\\
			&=&\displaystyle{\frac{M}{q}.qL=LM}.
		\end{array}$$
		Conversely, assume that $card(\mathbb{S}_N)=\, LM$. Again by the proof of (1), we have $\displaystyle{\sum_{j\in \mathbb{N}_{\frac{M}{q}}}card(\mathcal{K}_j)}=card(\mathbb{S}_N)=LM=\displaystyle{\frac{M}{q}.qL}$. Since 
		$card(\mathcal{K}_j)\leq \,qL$ for all $j\in \mathbb{N}_{\frac{M}{q}}$, then $card(\mathcal{K}_j)=\,qL$ for all $j\in \mathbb{N}_{\frac{M}{q}}$.
	\end{enumerate}
\end{proof}

\begin{lemma}\label{lem4.2}
	Let $L,M\in \mathbb{N}$ and $E_0,\,E_1,\, \ldots,\,E_{L-1}\subset \mathbb{Z}$ be mutually disjoint. Denote $E=\displaystyle{\bigcup_{l\in \mathbb{N}_L}E_l}$. Then the following statements are equivalent:
	\begin{enumerate}
		\item $\{e^{2\pi i \frac{m}{M}.}\chi_{E_l}\}_{m\in \mathbb{N}_M, \, l\in \mathbb{N}_L}$ is complete in  $\ell^2(E)$.
		\item For all $l\in \mathbb{N}_L$, $\{e^{2\pi i \frac{m}{M}.}\chi_{E_l}\}_{m\in \mathbb{N}_M}$ is complete in $\ell^2(E_l)$.
	\end{enumerate}
\end{lemma}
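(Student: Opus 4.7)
The plan is to exploit the orthogonal decomposition $\ell^2(E) = \bigoplus_{l \in \mathbb{N}_L} \ell^2(E_l)$, which is a direct consequence of the $E_l$ being pairwise disjoint with $\bigcup_l E_l = E$. Under this decomposition, every $f \in \ell^2(E)$ splits uniquely as $f = \sum_{l\in\mathbb{N}_L} f_l$ with $f_l := f\cdot\chi_{E_l}\in \ell^2(E_l)$, and, crucially, for every fixed $l_0\in \mathbb{N}_L$ and every $m\in \mathbb{N}_M$ the inner product decouples:
\[
\langle f,\,e^{2\pi i m/M\,\cdot}\chi_{E_{l_0}}\rangle_{\ell^2(E)} \;=\; \langle f_{l_0},\,e^{2\pi i m/M\,\cdot}\chi_{E_{l_0}}\rangle_{\ell^2(E_{l_0})},
\]
since $\chi_{E_{l_0}}$ is supported in $E_{l_0}$ while $f-f_{l_0}$ vanishes there. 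Since completeness of each family is equivalent to triviality of its orthogonal complement in the ambient space, the proof reduces to transferring zero-orthogonality statements between $\ell^2(E)$ and its components $\ell^2(E_l)$ via this identity.

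For the implication $(2)\Longrightarrow(1)$, I would take $f\in \ell^2(E)$ orthogonal to every $e^{2\pi i m/M\,\cdot}\chi_{E_l}$; the decoupling then forces each $f_l$ to be orthogonal to $\{e^{2\pi i m/M\,\cdot}\chi_{E_l}\}_{m\in\mathbb{N}_M}$ in $\ell^2(E_l)$, so by $(2)$ all $f_l$ vanish and hence $f=0$. For $(1)\Longrightarrow(2)$, I would fix $l_0$ and take $f\in \ell^2(E_{l_0})$ orthogonal to $\{e^{2\pi i m/M\,\cdot}\chi_{E_{l_0}}\}_{m\in\mathbb{N}_M}$; extending $f$ by zero realises it as an element of $\ell^2(E)$, where the inner product against $e^{2\pi i m/M\,\cdot}\chi_{E_l}$ for $l\neq l_0$ vanishes automatically by disjointness of supports. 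Thus $f$ is orthogonal to the full system and $(1)$ forces $f=0$.

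No serious obstacle is anticipated: the lemma is a bookkeeping statement formalising the fact that disjointness of the $E_l$ makes the analysis system block-diagonal with respect to $\ell^2(E)=\bigoplus_l \ell^2(E_l)$. The only point requiring mild care is the identification of $\ell^2(E_l)$ with its image in $\ell^2(E)$ under extension by zero, which is an isometric embedding and preserves inner products against any function supported in $E_l$; this is exactly what makes the decoupling identity above rigorous.
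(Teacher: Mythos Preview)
Your proposal is correct and follows essentially the same approach as the paper: both directions use the extension-by-zero / restriction correspondence between $\ell^2(E_l)$ and $\ell^2(E)$, together with the observation that $\langle f, e^{2\pi i m/M\,\cdot}\chi_{E_{l_0}}\rangle$ depends only on $f|_{E_{l_0}}$ because the supports decouple. The paper carries out the same orthogonal-complement argument, just with the decoupling identity verified by explicit summation rather than stated as a block-diagonal decomposition.
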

\begin{proof}\hspace{1cm}
	\begin{enumerate}
		\item[]$(1)\Longrightarrow (2)$: Assume $(1)$. Fix $l_0\in \mathbb{N}_L$ and let $f\in \ell^2(E_{l_0})$ be orthogonal to  $\{e^{2\pi i \frac{m}{M}.}\chi_{E_{l_0}}\}_{m\in \mathbb{N}_M}$. 
		Define $\overline{f}\in \ell^2(E)$ by $\overline{f}(j)=f(j)$ if $j\in E_{l_0}$ and $0$ otherwise. It is clear that if $l\neq l_0$, $\overline{f}$ is orthogonal to  $\{e^{2\pi i \frac{m}{M}.}\chi_{E_l}\}_{m\in \mathbb{N}_M}$. And we have: 
		$$\begin{array}{rcl}
			\langle \overline{f},e^{2\pi i \frac{m}{M}.}\chi_{E_{l_0}}\rangle&=&\displaystyle{\sum_{j\in E}\overline{f}(j)e^{-2\pi i \frac{m}{M}j}\chi_{E_{l_0}}(j)}\\
			&=& \displaystyle{\sum_{j\in E_{l_0}}f(j)e^{-2\pi i \frac{m}{M}j}\chi_{E_{l_0}}(j)}\\
			&=&0\;\;\; \text{ since } f\text{ is orthogonal to } \{e^{2\pi i \frac{m}{M}.}\chi_{E_{l_0}}\}_{m\in \mathbb{N}_M}.
		\end{array}$$
		Then $\overline{f}$ is orthogonal to $\{e^{2\pi i \frac{m}{M}.}\chi_{E_l}\}_{m\in \mathbb{N}_M, l\in \mathbb{N}_L}$ which is complete in $\ell^2(E)$, thus $\overline{f}=0$ on $E$, and then $f=0$ on $E_l$.
		
		\item[]$(2)\Longrightarrow(1)$: Assume $(3)$ and let $h\in \ell^2(E)$ be orthogonal to $\{e^{2\pi i \frac{m}{M}.}\chi_{E_l}\}_{m\in \mathbb{N}_M, l\in \mathbb{N}_L}$. For all $l\in \mathbb{N}_L$, define $h_l\in \ell^2(E_l)$ as the restriction of $h$ on $E_l$, i.e.  $h_l:=h|_{E_l}$. Let $l\in \mathbb{N}_L$. Fix $l\in \mathbb{N}_L$. Since $h$ is orthogonal to $\{e^{2\pi i \frac{m}{M}.}\chi_{E_l}\}_{m\in \mathbb{N}_M}$, then $\displaystyle{\sum_{j\in E}h(j) e^{-2\pi i \frac{m}{M}.}\chi_{E_l}(j)=0}$, then $\displaystyle{\sum_{j\in E_l}h(j) e^{-2\pi i \frac{m}{M}.}\chi_{E_l}(j)=0}$, thus $\displaystyle{\sum_{j\in E_l}h_l(j) e^{-2\pi i \frac{m}{M}.}\chi_{E_l}(j)=\langle h_l,e^{2\pi i \frac{m}{M}.}\chi_{E_l}\rangle =0}$. Hence $h_l$ is orthogonal to  $\{e^{2\pi i \frac{m}{M}.}\chi_{E_l}\}_{m\in \mathbb{N}_M}$ which is complete in $\ell^2(E_l)$. Hence $h_l=0$ on $E_l$. This for all $l\in \mathbb{N}_L$, therfore $h=0$ on $E$. Hence $\{e^{2\pi i \frac{m}{M}.}\chi_{E_l}\}_{m\in \mathbb{N}_M,l\in \mathbb{N}_L}$ is complete in $\ell^2(E)$.
		
	\end{enumerate}
	
\end{proof}

\begin{lemma}\label{lem4.3}
	Let $L,M\in \mathbb{N}$ and $E_0,\,E_1,\, \ldots,\,E_{L-1}\subset \mathbb{Z}$  be mutually disjoint. Denote $E=\displaystyle{\bigcup_{l\in \mathbb{N}_L}E_l}$. Then the following statements are equivalent:
	\begin{enumerate}
		\item $\{e^{2\pi i \frac{m}{M}.}\chi_{E_l}\}_{m\in \mathbb{N}_M, \, l\in \mathbb{N}_L}$ is a tight frame for $\ell^2(E)$ with frame bound $M$.
		\item For all $l\in \mathbb{N}_L$, $\{e^{2\pi i \frac{m}{M}.}\chi_{E_l}\}_{m\in \mathbb{N}_M}$ is a tight frame for $\ell^2(E_l)$ with frame bound $M$.
	\end{enumerate}
\end{lemma}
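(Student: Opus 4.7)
The key structural observation is that since $E_0,\ldots,E_{L-1}$ are mutually disjoint, $\ell^2(E)$ decomposes as the orthogonal direct sum $\bigoplus_{l\in\mathbb{N}_L}\ell^2(E_l)$, where each $\ell^2(E_l)$ is identified with the subspace of $\ell^2(E)$ of functions supported in $E_l$. Crucially, the analysis coefficients $\langle f, e^{2\pi i(m/M)\cdot}\chi_{E_l}\rangle$ are sensitive only to $f|_{E_l}$, so the combined tight frame identity splits cleanly into a sum of the $L$ individual identities. This is exactly the quantitative analogue of the decomposition used in Lemma \ref{lem4.2}, and the proof will follow the same template, only promoting ``orthogonal'' statements to Parseval-type equalities.

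\textbf{Step 1 (Decomposition).} For $f\in\ell^2(E)$ set $f_l:=f\cdot\chi_{E_l}\in\ell^2(E_l)$. Because the $E_l$ are pairwise disjoint, $f=\sum_{l\in\mathbb{N}_L}f_l$ and $\|f\|^2=\sum_{l\in\mathbb{N}_L}\|f_l\|^2$. A direct calculation gives, for every $m\in\mathbb{N}_M$ and $l\in\mathbb{N}_L$,
$$\langle f, e^{2\pi i(m/M)\cdot}\chi_{E_l}\rangle_{\ell^2(E)}=\sum_{j\in E_l}f(j)e^{-2\pi i(m/M)j}=\langle f_l, e^{2\pi i(m/M)\cdot}\chi_{E_l}\rangle_{\ell^2(E_l)}.$$

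\textbf{Step 2 ((2)$\Rightarrow$(1)).} Assume each $\{e^{2\pi i(m/M)\cdot}\chi_{E_l}\}_{m\in\mathbb{N}_M}$ is a tight frame for $\ell^2(E_l)$ with bound $M$. For $f\in\ell^2(E)$, use Step 1 and sum over $l$:
$$\sum_{l\in\mathbb{N}_L}\sum_{m\in\mathbb{N}_M}\left|\langle f, e^{2\pi i(m/M)\cdot}\chi_{E_l}\rangle\right|^2=\sum_{l\in\mathbb{N}_L}M\|f_l\|^2=M\|f\|^2,$$
which is the desired tight frame identity.

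\textbf{Step 3 ((1)$\Rightarrow$(2)).} Fix $l_0\in\mathbb{N}_L$ and let $f_{l_0}\in\ell^2(E_{l_0})$. Define $\bar f\in\ell^2(E)$ by extending $f_{l_0}$ by zero outside $E_{l_0}$. Then $\|\bar f\|^2=\|f_{l_0}\|^2$, and for $l\neq l_0$ the supports of $\bar f$ and $\chi_{E_l}$ are disjoint, so $\langle\bar f, e^{2\pi i(m/M)\cdot}\chi_{E_l}\rangle=0$. Applying the hypothesis to $\bar f$ and using Step 1,
$$M\|f_{l_0}\|^2=M\|\bar f\|^2=\sum_{l,m}\left|\langle \bar f, e^{2\pi i(m/M)\cdot}\chi_{E_l}\rangle\right|^2=\sum_{m\in\mathbb{N}_M}\left|\langle f_{l_0}, e^{2\pi i(m/M)\cdot}\chi_{E_{l_0}}\rangle\right|^2,$$
which is the tight frame identity for $\{e^{2\pi i(m/M)\cdot}\chi_{E_{l_0}}\}_{m\in\mathbb{N}_M}$ on $\ell^2(E_{l_0})$ with bound $M$. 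Since $l_0$ was arbitrary, (2) holds.

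\textbf{Expected obstacle.} There is essentially no obstacle; the disjointness of the $E_l$ trivialises the splitting. One minor point worth making explicit is that one cannot shortcut (1)$\Rightarrow$(2) by combining Lemma \ref{lem4.2} with Lemma \ref{lem2.6}, because Lemma \ref{lem2.6} concerns the single-window system on a single set $E$ and does not characterise tightness of the multi-window system on $\ell^2(E)$. The direct Parseval-sum argument above sidesteps this and is what I would write up.
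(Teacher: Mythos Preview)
Your proof is correct and follows essentially the same approach as the paper's: both directions use the orthogonal decomposition $\ell^2(E)=\bigoplus_l\ell^2(E_l)$ coming from disjointness, the localisation $\langle f,e^{2\pi i(m/M)\cdot}\chi_{E_l}\rangle=\langle f_l,e^{2\pi i(m/M)\cdot}\chi_{E_l}\rangle$, and then sum (for $(2)\Rightarrow(1)$) or extend by zero and drop vanishing terms (for $(1)\Rightarrow(2)$). The only difference is presentational---you isolate the decomposition as a preliminary Step~1, whereas the paper rederives it inside each implication.
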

\begin{proof}\hspace{1cm}
	\begin{enumerate}
		\item[]$(1)\Longrightarrow (2)$: Assume $(1)$. Fix $l_0\in \mathbb{N}_L$ and let $f\in \ell^2(E_{l_0})$. Define $\overline{f}\in \ell^2(E)$ by $\overline{f}(j)=f(j)$ if $j\in E_{l_0}$ and $0$ otherwise. It is clear that $\langle \overline{f},e^{2\pi i \frac{m}{M}.}\chi_{E_l}\rangle=0$ if $l\neq l_0$ and that $\|\overline{f}\|=\|f\|$. Together with the fact that $\{e^{2\pi i \frac{m}{M}.}\chi_{E_l}\}_{m\in \mathbb{N}_M, \, l\in \mathbb{N}_L}$ is a tight frame for $\ell^2(E)$ with frame bound $M$, we have: $$
		\begin{array}{rcl}
			M\|f\|^2&=&\displaystyle{\sum_{m=0}^{M-1}\left\vert \sum_{j\in E}\overline{f}(j)e^{-2\pi i \frac{m}{M}j}\chi_{E_{l_0}}(j)\right\vert^2}\\
			&=&\displaystyle{\sum_{m=0}^{M-1}\left\vert\sum_{j\in E_{l_0}}f(j)e^{-2\pi i \frac{m}{M}j}\chi_{E_{l_0}}(j)\right\vert^2}\\
			&=&\displaystyle{\sum_{m=0}^{M-1}\left\vert\langle f,e^{2\pi i \frac{m}{M}.}\chi_{E_{l_0}}\rangle\right\vert^2}.
		\end{array}$$
		Hence $\{e^{2\pi i \frac{m}{M}.}\chi_{E_{l_0}}\}_{m\in \mathbb{N}_M}$ is a tight frame for $\ell^2(E_{l_0})$ with frame bound $M$. And this for all $l_0\in \mathbb{N}_L$.
		\item[]$(2)\Longrightarrow (1)$: Assume $(2)$. Let $h\in \ell^2(E)$. For all $l\in \mathbb{N}_L$, define $h_l\in \ell^2(E_l)$ as the restriction of $h$ en $E_l$, i.e $h_l=h|_{E_l}$. We have $\displaystyle{\sum_{j\in E}\vert h(j)\vert^2=\sum_{l\in \mathbb{N}_L}\sum_{j\in E_l} \vert h_l(j)\vert^2}$, then $\|h\|^2=\displaystyle{\sum_{l\in \mathbb{N}_L}\|h_l\|^2}.$ It is also clear that  $\langle h,e^{2\pi i \frac{m}{M}.}\chi_{E_l}\rangle=\langle h_l,e^{2\pi i \frac{m}{M}.}\chi_{E_l}\rangle$. Since For all $l\in \mathbb{N}_L$, $\{e^{2\pi i \frac{m}{M}.}\chi_{E_l}\}_{m\in \mathbb{N}_M}$ is a tight frame for $\ell^2(E_l)$ with frame bound $M$, then for all $l\in \mathbb{N}_l$, we have $M\|h_l\|^2=\displaystyle{\sum_{m\in \mathbb{N}_M}\vert \langle h_l,e^{2\pi i \frac{m}{M}.}\chi_{E_l}\rangle \vert^2}.$ Hence: 
		$$M\|h\|^2=\displaystyle{\sum_{l\in \mathbb{N}_L}\sum_{m\in \mathbb{N}_M}\vert \langle h,e^{2\pi i \frac{m}{M}.}\chi_{E_l}\rangle \vert^2}.$$
		This for all $h\in \ell^2(E_l)$. Hence $\{e^{2\pi i \frac{m}{M}.}\chi_{E_l}\}_{m\in \mathbb{N}_M, \, l\in \mathbb{N}_L}$ is a tight frame for $\ell^2(E)$ with frame bound $M$.
	\end{enumerate}
\end{proof}
\begin{lemma}\label{lem4.4}
	Let $L,M\in \mathbb{N}$ and $E_0,\,E_1,\, \ldots,\,E_{L-1}\subset \mathbb{Z}$ be mutually disjoint. Denote $E=\displaystyle{\bigcup_{l\in \mathbb{N}_L}E_l}$. Then the following statements are equivalent:
	\begin{enumerate}
		\item $\{e^{2\pi i \frac{m}{M}.}\chi_{E_l}\}_{m\in \mathbb{N}_M, \, l\in \mathbb{N}_L}$ is a tight frame for $\ell^2(E)$ with frame bound $M$.
		\item $\{e^{2\pi i \frac{m}{M}.}\chi_{E_l}\}_{m\in \mathbb{N}_M, \, l\in \mathbb{N}_L}$ is complete in  $\ell^2(E)$.
		\item For all $l\in \mathbb{N}_L$, $E_l$ is $M\mathbb{Z}$-congruent to a subset of $\mathbb{N}_M$.
		\item For all $l\in \mathbb{N}_L$, $\displaystyle{\sum_{k\in \mathbb{Z}}\chi_{E_l}(.+kM)\leq 1}$ on $\mathbb{Z}$.
	\end{enumerate}
\end{lemma}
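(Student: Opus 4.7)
The plan is to reduce each statement about the union $E=\bigcup_{l\in\mathbb{N}_L}E_l$ to the corresponding statement about each individual $E_l$, and then invoke Lemma \ref{lem2.6}, which already gives the full equivalence of (1)--(4) for a single subset of $\mathbb{Z}$. Since the disjointness of the $E_l$ has already been used to prove Lemmas \ref{lem4.2} and \ref{lem4.3}, these two lemmas give ready-made bridges between the ``collective'' statements on $E$ and the ``individual'' statements on each $E_l$.

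Concretely, I would proceed as follows. First, apply Lemma \ref{lem4.3} to rewrite (1) as: for every $l\in\mathbb{N}_L$, $\{e^{2\pi i m/M\,\cdot}\chi_{E_l}\}_{m\in\mathbb{N}_M}$ is a tight frame for $\ell^2(E_l)$ with bound $M$. Next, apply Lemma \ref{lem4.2} to rewrite (2) as: for every $l\in\mathbb{N}_L$, $\{e^{2\pi i m/M\,\cdot}\chi_{E_l}\}_{m\in\mathbb{N}_M}$ is complete in $\ell^2(E_l)$. At this point (1), (2), (3) and (4) have each been reformulated as ``for every $l\in\mathbb{N}_L$, property $P_l$ holds,'' where $P_l$ is respectively the tight frame property with bound $M$, completeness, $M\mathbb{Z}$-congruence of $E_l$ to a subset of $\mathbb{N}_M$, and the pointwise estimate $\sum_{k\in\mathbb{Z}}\chi_{E_l}(\cdot+kM)\leq 1$.

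Finally, Lemma \ref{lem2.6} asserts precisely that these four properties $P_l$ are equivalent for the single set $E_l$. Applying it for each $l$ separately yields the equivalence of the four reformulated statements, which in turn gives the equivalence of (1)--(4) as stated. I do not foresee a genuine obstacle here: the only subtle points are the bookkeeping of bringing every statement to the ``for all $l$'' form, and checking that the disjointness hypothesis is used only through Lemmas \ref{lem4.2} and \ref{lem4.3} (statements (3) and (4), being pointwise in nature on each $E_l$, need no disjointness at all). So the proof reduces to citing the three previous lemmas in the right order.
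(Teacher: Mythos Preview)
Your proposal is correct and follows exactly the paper's own approach: the paper's proof is simply the one-line remark that the result is a direct consequence of Lemmas \ref{lem2.6}, \ref{lem4.2} and \ref{lem4.3}, and you have spelled out precisely how these three lemmas combine.
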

\begin{proof}
	It is a direct result of lemma \ref{lem2.6}, lemma \ref{lem4.2} and lemma \ref{lem4.3} together.
\end{proof}

The following proposition presents a characterization for the admissibility of $\mathbb{S}$ to admit a complete multi-window Gabor system $\mathcal{G}(g, L, M, N)$.
\begin{theorem}\label{prop4.5}
	Then the following statements are equivalent:
	\begin{enumerate}
		\item There exist $g:=\{g_l\}_{l\in \mathbb{N}_L}\subset \ell^2(\mathbb{S})$ such that $\mathcal{G}(g,L,M,N)$ is complete in $\ell^2(\mathbb{S})$.
		\item For all $j\in \mathbb{N}_{\frac{M}{q}}$, we have: 
		\begin{equation}
			card(\mathcal{K}_j)\leq qL.
		\end{equation} 
		\item $(10)$ holds for all $j\in \mathbb{Z}$.
	\end{enumerate}
\end{theorem}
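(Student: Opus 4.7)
The plan is to leverage the rank characterization of completeness obtained in Theorem \ref{prop3.5}: $\mathcal{G}(g,L,M,N)$ is complete in $\ell^2(\mathbb{S})$ if and only if $rank(Z_g(j,\theta)) = card(\mathcal{K}_j)$ for all $j \in \mathbb{N}_{\frac{M}{q}}$ and a.e. $\theta \in [0,1[$. Since $Z_g(j,\theta) \in \mathcal{M}_{qL,p}$, its rank is bounded above by $qL$, which immediately yields $(1) \Rightarrow (2)$. The equivalence $(2) \Leftrightarrow (3)$ is just the $\frac{M}{q}$-periodicity of $card(\mathcal{K}_j)$ recorded in Remark \ref{rem1}. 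The substance of the theorem is in $(2) \Rightarrow (1)$, where one must exhibit a concrete $g \subset \ell^2(\mathbb{S})$ realizing the rank equality.

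For $(2) \Rightarrow (1)$, I would build $g$ explicitly on the Zak side. For each $j \in \mathbb{N}_{\frac{M}{q}}$, set $s_j := card(\mathcal{K}_j) \leq qL$ and enumerate $\mathcal{K}_j = \{k_1^{(j)},\ldots,k_{s_j}^{(j)}\}$. Define a $\theta$-independent matrix $A_j \in \mathcal{M}_{qL,p}$ whose $k_i^{(j)}$-th column is the $i$-th standard basis vector $e_i \in \mathbb{C}^{qL}$ for $i = 1,\ldots,s_j$ and whose remaining columns are zero. Write $A_j$ as a vertical stack of $L$ blocks $A_j^{(l)} \in \mathcal{M}_{q,p}$. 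The converse part of the matrix-valued representation lemma in Section 2 (the unnumbered third lemma from \cite{4}, which says any $L^2$ matrix-valued function on $\mathbb{N}_{\frac{M}{q}} \times [0,1[$ arises as $Z_f$ for a unique $f \in \ell^2(\mathbb{Z})$) applied to each block produces $g_l \in \ell^2(\mathbb{Z})$ with $Z_{g_l}(j,\theta) = A_j^{(l)}$ for all $j \in \mathbb{N}_{\frac{M}{q}}$ and a.e. $\theta \in [0,1[$. By construction $rank(Z_g(j,\theta)) = s_j = card(\mathcal{K}_j)$, since $e_1,\ldots,e_{s_j}$ are linearly independent.

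It then remains to verify $g_l \in \ell^2(\mathbb{S})$. By construction the $k$-th column of $Z_g(j,\theta)$ vanishes whenever $k \notin \mathcal{K}_j$, i.e.\ $z_{pM}g_l(j+kM-rN,\theta) = 0$ for every $r \in \mathbb{N}_q$. For an arbitrary $m \in \mathbb{Z} \setminus \mathbb{S}$, Lemma \ref{lem2.5} supplies a unique decomposition $m = j + kM - rN + \ell qN$ with $(j,r,k,\ell) \in \mathbb{N}_{\frac{M}{q}} \times \mathbb{N}_q \times \mathbb{N}_p \times \mathbb{Z}$; because $\mathbb{S}$ is $N\mathbb{Z}$-periodic and $pM = qN$, the condition $m \notin \mathbb{S}$ forces $j + kM \notin \mathbb{S}$, hence $k \notin \mathcal{K}_j$. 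Quasi-periodicity of $z_{pM}$ then gives $z_{pM}g_l(m,\theta) = e^{-2\pi i \ell \theta} z_{pM}g_l(j+kM-rN,\theta) = 0$ for a.e.\ $\theta$, so $z_{pM}g_l$ vanishes off $\mathbb{S}_{pM} \times [0,1[$, and Lemma \ref{lem1.1} (unitarity of $z_{pM}$ restricted to $\ell^2(\mathbb{S})$) places $g_l$ in $\ell^2(\mathbb{S})$. Theorem \ref{prop3.5} now delivers completeness of $\mathcal{G}(g,L,M,N)$.

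The main obstacle I expect is this final verification: transferring the zero pattern prescribed only on the fundamental index set $\mathbb{N}_{\frac{M}{q}} \times \mathbb{N}_p \times \mathbb{N}_q$ to all of $\mathbb{Z} \setminus \mathbb{S}$ relies simultaneously on the unique decomposition in Lemma \ref{lem2.5}, the $qN\mathbb{Z}$-periodicity of $\mathbb{S}$, and the tracking of the quasi-periodicity phase factors. Everything else (the rank upper bound, the periodicity equivalence, the column-by-column independence of standard basis vectors) is essentially immediate once the Zak-side construction is set up.
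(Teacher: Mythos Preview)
Your proof is correct and follows essentially the same approach as the paper: the periodicity equivalence $(2)\Leftrightarrow(3)$ via Remark~\ref{rem1}, the rank bound $(1)\Rightarrow(2)$ via Theorem~\ref{prop3.5}, and for $(2)\Rightarrow(1)$ a $\theta$-constant matrix construction on the Zak side with linearly independent columns over $\mathcal{K}_j$ and zero columns elsewhere. Your verification that the resulting $g_l$ actually land in $\ell^2(\mathbb{S})$ (via Lemma~\ref{lem2.5} and quasi-periodicity) is in fact more explicit than the paper's, which simply asserts $g_l\in\ell^2(\mathbb{S})$ directly from the vanishing-column condition.
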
 
\begin{proof}\hspace{0.3cm}
	By remark \ref{rem1}, $card(\mathcal{K}_j)$ is $\displaystyle{\frac{M}{q}}$-periodic. Then $(2)\Longleftrightarrow (3)$.
	\begin{enumerate}
		\item[]$(1)\Longrightarrow (2)$: Assume $(1)$. Let $j\in \mathbb{N}_{\frac{M}{q}}$. Then by Theorem \ref{prop3.5}, $card(\mathcal{K}_j)=rank(Z_g(j,\theta)\,)\leq qL$ since $Z_g(j,\theta)\in \mathcal{M}_{qL,p}$.
		\item[]$(2)\Longrightarrow (1)$: Assume $(2)$. By Theorem \ref{prop3.5}, it suffices to find a matrix-valued function $Z:\mathbb{N}_{\frac{M}{q}}\times [0,1[\rightarrow \mathcal{M}_{qL,p}$ such that $Z(j,.)_{r,k}\in L^2([0,1[)$ for all $(j,r,k)\in \mathbb{N}_{\frac{M}{q}}\times \mathbb{N}_{qL}\times \mathbb{N}_p$ and for all $j\in \mathbb{N}_{\frac{M}{q}}$, if $k\notin \mathcal{K}_j$, the $k$-th column of $Z(j,.)$ is identically zero, and  such that $rank(Z(j,\theta)\,)=card(\mathcal{K}_j)$. Indeed, in this case, for all $l\in \mathbb{N}_L$, define $Z_l$ as the matrix-valued function $Z_l:\mathbb{N}_{\frac{M}{q}}\times [0,1[\rightarrow \mathcal{M}_{q,p}$ defined for all $j\in \mathbb{N}_{\frac{M}{q}}$, $\theta\in[0,1[$ by $Z_l(j,\theta):=Z(j,\theta)_{lq\leq r\leq (l+1)q-1,\, 0\leq k\leq p-1}$. Then by lemma \ref{lem2.2}, there exists a unique $g_l\in \ell^2(\mathbb{S})$ such that $Z_{g_l}=Z_l$. Denote $g=\{g_l\}_{l\in \mathbb{N}_L}$, then for all $j\in\mathbb{N}_{\frac{M}{q}}, \,\theta\in [0,1[$, $Z(j,\theta)=Z_g(j,\theta)$, hence by Theorem \ref{prop3.5}, $\mathcal{G}(g,L,M,N)$ is complete in $\ell^2(\mathbb{S})$ since $rank(Z_g(j,\theta)\,)=card(\mathcal{K}_j)$. \\
		For the existence of a such matrix-valued function: Let $j\in \mathbb{N}_{\frac{M}{q}}$ and a.e $\theta \in [0,1[$.  Define a $qL\times p$ constant matrix-valued function $Z(j,.):=(Z^0(j,.),Z^1(j,.),\ldots, Z^{p-1}(j,.)\,)$ on $[0,1[$, where $Z^k(j,.)$ is the $k$-th column of $Z(j,.)$ for $k\in \mathbb{N}_p$, such that $Z^k(j,.)=0$ if $k\notin \mathcal{K}_j$ and $\{Z^k(j,.),\; k\in \mathcal{K}_j\}$ is linearly independent in $\mathbb{C}^{qL}$. This is possible since $card(\mathcal{K}_j)\leq qL$. Then for a.e $\theta \in [0,1[$,  $rank(Z(j,\theta)\,)=card(\mathcal{K}_j)$. Hence we obtain the desired matrix-valued function $Z$.
	\end{enumerate}
	
\end{proof}

\begin{proposition}\label{prop4.6}
	The following statements are equivalent:
	\begin{enumerate}
		\item There exist $ E_0,\,E_1,\, \ldots,\,E_{L-1}\subset \mathbb{Z}$ mutually disjoint such that $\mathcal{G}(\,\{\chi_{E_l}\}_{l\in \mathbb{N}_L},\, L,M,N)$ is a tight frame for $\ell^2(\mathbb{S})$ with frame bound $M$.
		\item  For all $j\in \mathbb{N}_{\frac{M}{q}}$, we have: 
		\begin{equation}
			card(\mathcal{K}_j)\leq qL.
		\end{equation} 
		\item $(11)$ holds for all $j\in \mathbb{Z}$.
	\end{enumerate}
\end{proposition}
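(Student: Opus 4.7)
My plan is to handle the three implications separately. The equivalence $(2)\Longleftrightarrow(3)$ is immediate from Remark \ref{rem1}, since $\mathrm{card}(\mathcal{K}_j)$ is $M/q$-periodic. The direction $(1)\Longrightarrow(2)$ is free: a tight frame is complete in $\ell^2(\mathbb{S})$, so Theorem \ref{prop4.5} applied to the window system $\{\chi_{E_l}\}_{l\in\mathbb{N}_L}$ gives $\mathrm{card}(\mathcal{K}_j)\leq qL$. All the content of the proposition therefore lies in $(2)\Longrightarrow(1)$, and I will build the $E_l$'s explicitly.

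My strategy for $(2)\Longrightarrow(1)$ is to recognize that
\[
\mathcal{G}(\{\chi_{E_l}\}_{l\in\mathbb{N}_L},L,M,N)
=\bigl\{e^{2\pi i\frac{m}{M}\cdot}\chi_{E_l+nN}\bigr\}_{m\in\mathbb{N}_M,\,n\in\mathbb{Z},\,l\in\mathbb{N}_L},
\]
and to invoke the natural countably-indexed extension of Lemma \ref{lem4.4} to the family $\{E_l+nN\}_{l\in\mathbb{N}_L,\,n\in\mathbb{Z}}$. The proofs of Lemmas \ref{lem4.2} and \ref{lem4.3} already work verbatim for any countable pairwise-disjoint family (they rely only on the restriction $h_l:=h|_{E_l}$ and the identity $\|h\|^2=\sum\|h_l\|^2$), so this extension is essentially free. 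It then suffices to construct finite subsets $E_0,\ldots,E_{L-1}$ of $\mathbb{S}$ so that (i) $\{E_l+nN\}_{l,n}$ is pairwise disjoint, (ii) $\bigcup_{l,n}(E_l+nN)=\mathbb{S}$, and (iii) each $E_l$ is $M\mathbb{Z}$-congruent to a subset of $\mathbb{N}_M$, i.e.\ meets each residue class mod $M$ at most once. Conditions (i) and (ii) together just say that the $E_l$'s, reduced modulo $N$, partition $\mathbb{S}_N$.

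The construction goes by slicing $\mathbb{S}_N$ according to residues modulo $M/q$. Write $\mathbb{S}_N=\bigsqcup_{j'\in\mathbb{N}_{M/q}}S_{j'}$ with $S_{j'}=\{n\in\mathbb{S}_N:n\equiv j'\pmod{M/q}\}$; by Remark \ref{rem1}, $|S_{j'}|=\mathrm{card}(\mathcal{K}_{j'})\leq qL$. Since $N=pM/q$ and $p\wedge q=1$, the map $k\mapsto kN\bmod M$ is a bijection from $\mathbb{N}_q$ onto the coset $C_{j'}:=j'+\tfrac{M}{q}\mathbb{N}_q\subset\mathbb{Z}/M\mathbb{Z}$; consequently, for each $n\in S_{j'}$ the $q$ integers $n+kN$ ($k\in\mathbb{N}_q$) are $N\mathbb{Z}$-lifts of $n$ whose residues mod $M$ fill $C_{j'}$ bijectively. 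Because $|S_{j'}|\leq qL$, I may choose an injection $\phi_{j'}:S_{j'}\hookrightarrow\mathbb{N}_L\times\mathbb{N}_q$; if $\phi_{j'}(n)=(l,r)$, I add to $E_l$ the unique lift $n+k_rN$ where $k_r\in\mathbb{N}_q$ is determined by $k_rN\equiv r\frac{M}{q}\pmod M$. Carrying this out for every $j'\in\mathbb{N}_{M/q}$ produces finite sets $E_l\subset\mathbb{S}$ that are pairwise disjoint, project onto $\mathbb{S}_N$ modulo $N$ via a partition, and (by construction of the lifts across all $j'$) meet each residue class mod $M$ at most once.

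The main obstacle, such as it is, is purely bookkeeping: one must check that the lifts chosen for different $n\in\mathbb{S}_N$ are pairwise distinct in $\mathbb{Z}$ (clear, since $n\in\mathbb{N}_N$ forces $n+k_rN\neq n'+k_{r'}N$ when $n\neq n'$), that the resulting $\{E_l+nN\}_{l,n}$ are pairwise disjoint (clear, since each $E_l$ has a single representative in each residue class mod $N$, and distinct $E_l$'s have disjoint residue sets mod $N$), and that each $E_l$ is $M\mathbb{Z}$-congruent to a subset of $\mathbb{N}_M$ (immediate, since residues from distinct cosets $C_{j'}$ are automatically distinct, while within a single $C_{j'}$ the injection $\phi_{j'}$ enforces distinct values of $r$ for distinct elements of $E_l\cap S_{j'}$). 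Feeding this construction into the countable version of Lemma \ref{lem4.4} then yields the tight frame with bound $M$, completing $(2)\Longrightarrow(1)$.
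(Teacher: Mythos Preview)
Your overall plan for $(2)\Rightarrow(1)$ is the same as the paper's: build mutually disjoint finite sets $E_0,\ldots,E_{L-1}\subset\mathbb{S}$ whose union is $N\mathbb{Z}$-congruent to $\mathbb{S}_N$ and each of which is $M\mathbb{Z}$-congruent to a subset of $\mathbb{N}_M$, and then invoke Lemma~\ref{lem4.4} (the paper applies it to the finite family $\{E_l\}_l$ and assembles the translates by the orthogonal decomposition $\ell^2(\mathbb{S})=\bigoplus_n\ell^2(E+nN)$, while you apply the obvious countable version directly; these are equivalent). The paper also organizes the construction by the residues modulo $M/q$, using the sets $\mathcal{K}_j$ where you use $S_{j'}$; via Remark~\ref{rem1} these are in bijection, so the two constructions are essentially the same in different coordinates.

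There is, however, a genuine slip in your lift formula. You choose $k_r\in\mathbb{N}_q$ by the condition $k_rN\equiv r\tfrac{M}{q}\pmod M$, which does \emph{not} depend on $n$. The residue of the resulting lift $n+k_rN$ modulo $M$ is then $(n\bmod M)+r\tfrac{M}{q}$, which \emph{does} depend on $n\bmod M$ (not just on $j'$ and $r$). Hence ``distinct $r$'s'' do not force distinct residues modulo $M$. A concrete failure: take $M=6$, $N=4$ (so $p=2$, $q=3$, $M/q=2$), $\mathbb{S}=\mathbb{Z}$, $L=1$, $j'=0$, $S_0=\{0,2\}$, and the injection $\phi_0(0)=(0,0)$, $\phi_0(2)=(0,2)$. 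Then $k_0=0$, $k_2=1$, so the lifts are $0$ and $2+4=6$, both with residue $0$ modulo $6$; thus $E_0$ is not $M\mathbb{Z}$-congruent to a subset of $\mathbb{N}_M$.

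The fix is the obvious one and is exactly what the paper's choice of the pair $(k_{l,j,i},r_{l,j,i})$ accomplishes: let $k_{n,r}\in\mathbb{N}_q$ be the unique index with $n+k_{n,r}N\equiv j'+r\tfrac{M}{q}\pmod M$ (this exists because, as you correctly argued, $k\mapsto (n+kN)\bmod M$ bijects $\mathbb{N}_q$ onto $C_{j'}$). With this $n$-dependent lift the residue modulo $M$ is precisely $j'+r\tfrac{M}{q}$, so distinct $r$'s within the same $j'$ give distinct residues, and residues from different $j'$ lie in different cosets $C_{j'}$; your verification of conditions (i)--(iii) then goes through and the proof is complete.
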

\begin{proof}\hspace{0.3cm} By remark \ref{rem1}, $card(\mathcal{K}_j)$ is $\displaystyle{\frac{M}{q}}$-periodic. Then $(2)\Longleftrightarrow (3)$.
	\begin{enumerate}
		\item[]$(1)\Longrightarrow (2)$: By Theorem \ref{prop4.5}.
		\item[]$(2)\Longrightarrow (1)$: It suffices to find $E_0,E_1, \ldots, E_{L-1}\subset\mathbb{Z}$ mutually disjoint such that for all $l\in \mathbb{N}_L$, $E_l$ is $M\mathbb{Z}$-congruent to a subset of $\mathbb{N}_M$ and $E:=\displaystyle{\bigcup_{l\in \mathbb{N}_L}E_l}$ is $N\mathbb{Z}$-congruent to $\mathbb{S}_N$. In fact, in this case, we have  $\ell^2(\mathbb{S})=\displaystyle{\bigoplus_{n\in \mathbb{Z}}\ell^2(E+nN)}$ and, by lemma \ref{lem4.4},  $\{e^{2\pi i\frac{m}{M}.}\chi_{E_l}\}_{m\in \mathbb{N}_M,l\in \mathbb{N}_L}$ is a tight frame for $\ell^2(E)$ with frame bound $M$. Then  for all $n\in \mathbb{Z}$, $\{e^{2\pi i\frac{m}{M}.}\chi_{E_l}(.-nN)\}_{m\in \mathbb{N}_M,l\in \mathbb{N}_L}$ is a tight frame for $\ell^2(E+nN)$ with frame bbound $M$. Hence, by similar arguments used in the proof of lemma \ref{lem4.3}, $\mathcal{G}(\,\{\chi_{E_l}\}_{l\in \mathbb{N}_L}\,,L,M,N):=\{e^{2\pi i\frac{m}{M}.}\chi_{E_l}(.-nN)\}_{n\in \mathbb{Z}, m\in \mathbb{N}_M,l\in \mathbb{N}_L}$ is a tight frame for $\displaystyle{\bigoplus_{n\in \mathbb{Z}}\ell^2(E+nN)}=\ell^2(\mathbb{S})$ with frame bound $M$.\\
		{\underline{For the construction of the desired $E_l$:}} 
		Let $j\in \mathbb{N}_{\frac{M}{q}}$. Let $K$ be the maximal integer satisfying $Kq\leq card(\mathcal{K}_j)$. For all $l\in \mathbb{N}_K$, define $\mathcal{K}_j^l$ as the set of the $(l+1)$-th $q$ elements of $\mathcal{K}_j$, $\mathcal{K}_j^K$ as the set of the rest elements of $\mathcal{K}_j$ and for $l\in \mathbb{N}_L-\mathbb{N}_{K+1}$, take $\mathcal{K}_j^l=\emptyset$. For all $l\in \mathbb{N}_L$ such that $\mathcal{K}_j^l\neq \emptyset$, write $\mathcal{K}_j^l:=\{k_{l,j,i}:\; i\in \mathbb{N}_{card(\mathcal{K}_j^l)}\}$ and choose       $\{r_{l,j,i}:\; i\in \mathbb{N}_{card(\mathcal{K}_j^l)}\}\subset \mathbb{N}_q$ such that $r_{l,j,i}\neq r_{l,j,i'}$ if $i\neq i'$. This choice is guaranteed since $car(\mathcal{K}_j^l)\leq q$ for all $l\in \mathbb{N}_L$. For all $l\in \mathbb{N}_L$, define: 
		$$E_j^l=\left\lbrace
		\begin{array}{rcl}
			&\emptyset&\;\; \text{ if }\mathcal{K}_j^l=\emptyset,\\
			&\{j+k_{l,j,i}M-r_{l,j,i}N:\; i\in \mathbb{N}_{card(\mathcal{K}_j^l)}\}& \;\; \text{ otherwise}.
		\end{array}
		\right.$$
		Take for all $l\in \mathbb{N}_L$,  $E_l:=\displaystyle{\bigcup_{j\in \mathbb{N}_{\frac{M}{q}}}E_j^l}$.
		\begin{enumerate}
			\item[]$\rightarrow$ Let's show that for all $l\in \mathbb{N}_L$, $E_l$ is $M\mathbb{Z}$-congruent to a subset of $\mathbb{N}_M$. Let $l\in \mathbb{N}_L$. For this, it suffices to show that for all $j\in \mathbb{N}_{\frac{M}{q}},\, i\in \mathbb{N}_{card(\mathcal{K}_j^l)}$, we have: $$M|\, (j+k_{l,j,i}M-r_{l,j,i}N)-(j'+k_{l,j',i'}M-r_{l,j',i'}N)\;\Longrightarrow\; j=j' \text{ and }i=i'.$$
			Let $j,j'\in \mathbb{N}_{\frac{M}{q}}$ and $i,i'\in \mathbb{N}_{card(\mathcal{K}_j^l)}$ and suppose that $M|\, (j+k_{l,j,i}M-r_{l,j,i}N)-(j'+k_{l,j',i'}M-r_{l,j',i'}N)$. Then $M|\, j-j'+(k_{l,j,i}-k_{l,j',i'})M-(r_{l,j,i}-r_{l,j',i'})N$.\\
			Put $s=\displaystyle{\frac{M}{q}}$, then $M=sq$ and $N=sp$. Thus $sq|\, j-j'+(k_{l,j,i}-k_{l,j',i'})sq-(r_{l,j,i}-r_{l,j',i'})sp$, then $s|j-j'$, hence $j=j'$ since $j,j'\in \mathbb{N}_s$. On the other hand, we have $sq|(r_{l,j,i}-r_{l,j,i'})sp$, then $q|(r_{l,j,i}-r_{l,j,i'})p$, thus $q|r_{l,j,i}-r_{l,j,i'}$ since $p\wedge q=1$, hence $r_{l,j,i}=r_{l,j,i'}$ since $r_{l,j,i},r_{l,j,i'}\in \mathbb{N}_q$. And then $i=i'$.\\
			Hence for all $l\in \mathbb{N}_L$, $E_l$ is $M\mathbb{Z}$-congruent to a subset of $\mathbb{N}_M$.
			\item[]$\rightarrow$ Let's prove now that $E=\displaystyle{\bigcup_{l\in \mathbb{N}_L}E_l}$ is $N\mathbb{Z}$-congruent to $\mathbb{S}_N$. We show first that $E$ is $N\mathbb{Z}$-congruent to a subset of $\mathbb{N}_N$. For this, let $(l,j,i),\, (l',j',i')\in \mathbb{N}_L\times \mathbb{N}_{\frac{M}{q}}\times \mathbb{N}_{card(\mathcal{K}_j)}$ and suppose that $N| (j-j')+(k_{l,j,i}-k_{l',j',i'})M-(r_{l,j,i}-r_{l',j',i'})N$. Put $s=\displaystyle{\frac{M}{q}}$, then $M=sq$ and $N=sp$. Thus $sp|\, j-j'+(k_{l,j,i}-k_{l',j',i'})sq-(r_{l,j,i}-r_{l',j',i'})sp$, then $s|j-j'$, hence $j=j'$ since  $j,j'\in \mathbb{N}_s$. On the other hand, we have $sp|(k_{l,j,i}-k_{l',j,i'})sq$, then $p|k_{l,j,i}-k_{l',j,i'}$, hence $k_{l,j,i}=k_{l',j,i'}$
			since $k_{l,j,i},k_{l',j,i'}\in \mathbb{N}_p$. Then $l=l'$ and $i=i'$ by definition of the elements $k_{l,j,i}$. Thus $E$ is $N\mathbb{Z}$-congruent to a subset of $\mathbb{N}_N$. Observe that $E\subset \mathbb{S}$, then $E$ is $N\mathbb{Z}$-congruent to a subset of $\mathbb{S}_N$. By what above, we have, in particular, that the $E_j^l$ are mutually disjoint (and also the $E_l$ are mutually disjoint). Then  $$
			\begin{array}{rcl}
				card(E)&=&\displaystyle{\sum_{l\in \mathbb{N}_L}\sum_{j\in \mathbb{N}_{\frac{M}{q}}}card(\mathcal{K}_j^l)}\\
				&=&\displaystyle{\sum_{j\in \mathbb{N}_{\frac{M}{q}}} card(\mathcal{K}_j)}\\
				&=&\displaystyle{\sum_{j\in \mathbb{N}_{\frac{M}{q}}}\sum_{n\in \mathbb{Z}}\chi_{\mathbb{S}_N}(j+\displaystyle{\frac{M}{q}}n)}\;\; \text{ remark }\ref{rem1}\\
				&=&\displaystyle{\sum_{j\in \mathbb{Z}}\chi_{\mathbb{S}_N}(j)}\\
				&=&card(\mathbb{S}_N).
			\end{array}$$
			Hence $E$ is $N\mathbb{Z}$-congruent to $\mathbb{S}_N$.
		\end{enumerate}
	\end{enumerate}
\end{proof}
The following result presents an admissibility characterization for $\mathbb{S}$ to admit a  multi-window Gabor (Parseval) frame  $\mathcal{G}(g,L,M,N)$.
\begin{theorem}\label{cor4.7}
	The following statements are equivalent:
	\begin{enumerate}
		\item There exist $g:=\{g_l\}_{l\in \mathbb{N}_L}\subset \ell^2(\mathbb{S})$ such that $\mathcal{G}(g,L,M,N)$ is a Parseval frame for $\ell^2(\mathbb{S})$.
		\item There exist $g:=\{g_l\}_{l\in \mathbb{N}_L}\subset \ell^2(\mathbb{S})$ such that $\mathcal{G}(g,L,M,N)$ is a frame for $\ell^2(\mathbb{S})$.
		\item For all $j\in \mathbb{N}_{\frac{M}{q}}$ (for all $j\in \mathbb{Z}$), we have: 
		\begin{equation*}
			card(\mathcal{K}_j)\leq qL.
		\end{equation*} 
	\end{enumerate}
\end{theorem}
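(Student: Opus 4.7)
The plan is to obtain $(1)\Rightarrow(2)\Rightarrow(3)\Rightarrow(1)$, invoking the two preceding results to do the heavy lifting. The implication $(1)\Rightarrow(2)$ is immediate, since every Parseval frame is a frame (with bounds $A=B=1$).

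For $(2)\Rightarrow(3)$, I would observe that any frame for $\ell^2(\mathbb{S})$ is in particular complete in $\ell^2(\mathbb{S})$. Hence, if such a $g$ exists, Theorem \ref{prop4.5} applies and yields $\operatorname{card}(\mathcal{K}_j)\leq qL$ for every $j\in\mathbb{N}_{M/q}$ (equivalently, for every $j\in\mathbb{Z}$, by the $\frac{M}{q}$-periodicity noted in Remark \ref{rem1}).

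For the key direction $(3)\Rightarrow(1)$, I would appeal to Proposition \ref{prop4.6}: under hypothesis $(3)$ there exist mutually disjoint $E_0,\ldots,E_{L-1}\subset\mathbb{Z}$ such that $\mathcal{G}(\{\chi_{E_l}\}_{l\in\mathbb{N}_L},L,M,N)$ is a tight frame for $\ell^2(\mathbb{S})$ with frame bound $M$. Setting $g_l:=\frac{1}{\sqrt{M}}\chi_{E_l}$ for each $l\in\mathbb{N}_L$ and using the linearity of the modulation and translation operators, the family $\mathcal{G}(g,L,M,N)=\{E_{m/M}T_{nN}g_l\}$ is then a tight frame with frame bound $1$, i.e.\ a Parseval frame for $\ell^2(\mathbb{S})$. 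Note each $g_l\in\ell^2(\mathbb{S})$ since $E_l\subset\mathbb{S}$ by the construction in the proof of Proposition \ref{prop4.6}.

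There is no real obstacle here: the theorem is essentially a repackaging of Theorem \ref{prop4.5} (completeness) and Proposition \ref{prop4.6} (existence of a tight-frame generator built from characteristic functions), together with the trivial rescaling $g_l\mapsto \frac{1}{\sqrt{M}}g_l$ that converts tight bound $M$ into tight bound $1$. The only point worth double-checking is that the constructed $E_l$ are contained in $\mathbb{S}$, which is visible from their definition $E_l=\bigcup_{j}\{j+k_{l,j,i}M-r_{l,j,i}N\}$ with $j+k_{l,j,i}M\in\mathbb{S}$ (by the definition of $\mathcal{K}_j$) and the $N\mathbb{Z}$-periodicity of $\mathbb{S}$.
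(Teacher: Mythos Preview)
Your proof is correct and follows essentially the same route as the paper: $(1)\Rightarrow(2)$ trivially, $(2)\Rightarrow(3)$ via completeness and Theorem~\ref{prop4.5}, and $(3)\Rightarrow(1)$ via Proposition~\ref{prop4.6} followed by the rescaling $\chi_{E_l}\mapsto\frac{1}{\sqrt{M}}\chi_{E_l}$. Your extra verification that $E_l\subset\mathbb{S}$ is a nice touch but not strictly needed beyond what the paper already establishes.
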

\begin{proof}
	We have $(1)$ impies $(2)$. And since a frame is in particular a complete sequence, then $(2)$ implies $(3)$ by Theorem $\ref{prop4.5}$. And by proposition $\ref{prop4.6}$, $(3)$ implies the existence of $\emptyset\neq E_0,E_1,\ldots,E_{L-1}\subset \mathbb{Z}$ such that $\mathcal{G}(\,\{\chi_{E_l}\}_{l\in \mathbb{N}_L},\, L,M,N)$ is a tight frame for $\ell^2(\mathbb{S})$ with frame bound $M$. Hence $\mathcal{G}(\,\{\displaystyle{\frac{1}{\sqrt{M}}}.\chi_{E_l}\}_{l\in \mathbb{N}_L},\, L,M,N)$ is a Parseval frame for $\ell^2(\mathbb{S})$.
	
\end{proof}
The following theorem presents a characterization for the admissibility of $\mathbb{S}$ to admit a $L$-window Gabor basis and $L$-window Gabor orthonormal basis $\mathcal{G}(g, L, M, N)$.
\begin{theorem}\label{prop4.8}\hspace{0.3cm} The following statements are equivalent:
	\begin{enumerate}
		\item There exist $g:=\{g_l\}_{l\in \mathbb{N}_L}\subset \ell^2(\mathbb{S})$ such that $\mathcal{G}(g,L,M,N)$ is an orthonormal basis for $\ell^2(\mathbb{S})$.
		\item There exist $g:=\{g_l\}_{l\in \mathbb{N}_L}\subset \ell^2(\mathbb{S})$ such that $\mathcal{G}(g,L,M,N)$ is a Riesz basis for $\ell^2(\mathbb{S})$.
		\item For all $j\in \mathbb{N}_{\frac{M}{q}}$ (for all $j\in \mathbb{Z}$), we have: 
		\begin{equation*}
			card(\mathcal{K}_j)= qL.
		\end{equation*} 
	\end{enumerate}
\end{theorem}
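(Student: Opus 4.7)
The plan is to prove the chain $(1) \Rightarrow (2) \Rightarrow (3) \Rightarrow (1)$. The periodicity equivalence in the parenthetical clause of $(3)$ follows immediately from Remark \ref{rem1}, so I may work with $j \in \mathbb{N}_{M/q}$ throughout.

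The implication $(1) \Rightarrow (2)$ is immediate, since every orthonormal basis of a separable Hilbert space is a Riesz basis.

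For $(2) \Rightarrow (3)$, suppose $\mathcal{G}(g,L,M,N)$ is a Riesz basis for $\ell^2(\mathbb{S})$. A Riesz basis is in particular a frame, so Theorem \ref{cor4.7} yields $\mathrm{card}(\mathcal{K}_j) \leq qL$ for all $j \in \mathbb{N}_{M/q}$. On the other hand, since the Riesz basis is an exact frame, Proposition \ref{KRK} gives $\mathrm{card}(\mathbb{S}_N) = LM$. Lemma \ref{lem4.1}(2) then upgrades the inequality to $\mathrm{card}(\mathcal{K}_j) = qL$ for every $j \in \mathbb{N}_{M/q}$.

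For $(3) \Rightarrow (1)$, I will feed the hypothesis into the construction carried out in the proof of Proposition \ref{prop4.6}. With $\mathrm{card}(\mathcal{K}_j) = qL$ for each $j \in \mathbb{N}_{M/q}$, the maximal integer $K$ in that construction satisfies $K = L$ and the residual set $\mathcal{K}_j^L$ is empty, so each block $\mathcal{K}_j^l$ contains exactly $q$ elements. Consequently $E_j^l$ has cardinality $q$ and, summing over $j \in \mathbb{N}_{M/q}$, each $E_l$ has cardinality $M$. Proposition \ref{prop4.6} already guarantees that the $E_l$ are mutually disjoint, that each $E_l$ is $M\mathbb{Z}$-congruent to a subset of $\mathbb{N}_M$, and that $\mathcal{G}(\{\chi_{E_l}\}_{l \in \mathbb{N}_L}, L, M, N)$ is a tight frame for $\ell^2(\mathbb{S})$ with bound $M$. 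Setting $g_l := \tfrac{1}{\sqrt{M}} \chi_{E_l}$ yields a Parseval frame, and since $\|E_{m/M} T_{nN} g_l\|^2 = \mathrm{card}(E_l)/M = 1$ for every index, Lemma \ref{PRO} promotes the Parseval frame to an orthonormal basis. This closes the loop.

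I expect the main obstacle to be the bookkeeping in $(3) \Rightarrow (1)$: one must verify that the equality $\mathrm{card}(\mathcal{K}_j) = qL$ really does force each $E_j^l$ to contain exactly $q$ elements (so that the indices $r_{l,j,i}$ can be chosen to exhaust $\mathbb{N}_q$), and hence $\mathrm{card}(E_l) = M$ for every $l$. The other steps are essentially invocations of results already proved.
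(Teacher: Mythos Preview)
Your proof is correct, and the implications $(1)\Rightarrow(2)$ and $(2)\Rightarrow(3)$ match the paper's argument exactly.

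For $(3)\Rightarrow(1)$ you take a slightly different route from the paper. The paper does not revisit the explicit construction in Proposition~\ref{prop4.6}; instead it simply invokes Theorem~\ref{cor4.7} to obtain \emph{some} Parseval frame $\mathcal{G}(g,L,M,N)$, then uses Lemma~\ref{lem4.1} to deduce $\mathrm{card}(\mathbb{S}_N)=LM$, applies Proposition~\ref{KRK} to conclude the Parseval frame is a Riesz basis, and finally uses the Riesz-basis criterion of Lemma~\ref{PRO} to upgrade it to an orthonormal basis. You instead go back into the guts of the construction, verify that the equality $\mathrm{card}(\mathcal{K}_j)=qL$ forces each $E_l$ to have exactly $M$ elements, and then apply the norm-one criterion of Lemma~\ref{PRO}. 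Both arguments are valid; the paper's is shorter because it avoids re-examining the construction, while yours is more explicit and yields a concrete orthonormal basis (namely $g_l=M^{-1/2}\chi_{E_l}$) rather than an abstractly existing one. Your bookkeeping concern is well placed but easily settled: with $\mathrm{card}(\mathcal{K}_j)=qL$ the maximal $K$ equals $L$, the blocks $\mathcal{K}_j^0,\ldots,\mathcal{K}_j^{L-1}$ each contain exactly $q$ elements, and the disjointness of the $E_j^l$ already established in Proposition~\ref{prop4.6} gives $\mathrm{card}(E_l)=(M/q)\cdot q=M$.
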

\begin{proof}
	It is well known that $(1)$ implies $(2)$. Assume that  $\mathcal{G}(g,L,M,N)$ is a Riesz basis for $\ell^2(\mathbb{S})$, then by Theorem \ref{cor4.7} we have for all $j\in \mathbb{N}_{\frac{M}{q}}$ ($\forall j\in \mathbb{Z}$) $card(\mathcal{K}_j)\leq qL$. And by proposition \ref{KRK}, we have $card(\mathbb{S}_N)=LM$. Then by lemma $\ref{lem4.1}$, we have $card(\mathcal{K}_j)= qL.$ Hence $(2)$ implies $(3)$. Assume that $card(\mathcal{K}_j)= qL.$ Then by Theorem \ref{cor4.7}, There exists $g:=\{g_l\}_{l\in \mathbb{N}_L}\subset \ell^2(\mathbb{S})$ such that $\mathcal{G}(g,L,M,N)$ is Parseval frame for $\ell^2(\mathbb{S})$. By lemma \ref{lem4.1}, we have that $card(\mathbb{S}_N)=LM$ and then by proposition \ref{KRK}, $\mathcal{G}(g,L,M,N)$ is a Riesz basis for $\ell^2(\mathbb{S})$, then is an orthonormal basis for $\ell^2(\mathbb{S})$ (lemma \ref{PRO}). Hence $(3)$ implies $(1)$.
\end{proof}
\begin{remark}
	In the case of $\mathbb{S}=\mathbb{Z}$, we hace for all $j\in \mathbb{N}_{\frac{M}{q}}$, $\mathcal{K}_j=\mathbb{N}_p$. Then the condition $(3)$ in the Theorem \ref{cor4.7} is equivalent to $p\leq Lq$ which is equivalent to $N\leq LM$. Then we obtain the proposition $3.5$ in \cite{3}. And also the condition $(3)$ in Theorem \ref{prop4.8} is equivalent to $N=LM$. Then we obtain the proposition 3.11 in \cite{3}.
\end{remark}
We finish this work by the following example:
\begin{example}
	In this example, we use the notations already introduced in what above.\\
	Let $M=3$ and $N=5$. Let $\mathbb{S}=\{0,1,2,4\}+5\mathbb{Z}$. It is clear that $p=5$ and $q=3$. Then $\displaystyle{\frac{M}{q}}=1$, then $\mathbb{N}_{\frac{M}{q}}=\{0\}$. We have clearly $\mathcal{K}_0=\{0,2,3,4\}$. Then $card(\mathcal{K}_0)=4>q$. Then, by Theorem \ref{cor4.7},  there does not exist a Gabor frame with a signe window for $\ell^2(\mathbb{S})$, but by the same Theorem, we can always find a Multiwindow Gabor frame for $\ell^2(\mathbb{S})$ with $L$-window for all $L\geqslant 2$ since $card(\mathcal{K}_j)=4\leq3\times2=6$. Here is an example of $2$-window Gabor frame for $\ell^2(\mathbb{S})$.\\
	Define $g_0:=\chi_{\{-1,0,1\}}$ and $g_1:=\chi_{\{-4,4,12\}}$, since $-1,0,1,-4,4,12\in \mathbb{S}$, then $g_0,g_1\in \ell^2(\mathbb{S})$. Observe, also, that $\mathbb{S}=\{0,1,2,4,5,-,7,9,10,11,12,14\}+15\mathbb{Z}$. Then we have $g_0$ vanishes on $\{-10, -5, -4, 2, 3, 4, 6, 7, 8, 9, 12, 13\}+15\mathbb{Z}\bigcup \{-1,0,1\}+15(\mathbb{Z}-\{0\})$, and $g_1$ vanishes on $\{-10, -5, -1, 0, 1, 2, 3, 6, 7, 8, 9, 13\}+15\mathbb{Z}\bigcup \{-4,4,12\}+15(\mathbb{Z}-\{0\})$. Then, after a simple computation, we have for a.e $\theta \in [0,1[$: 
	$$\begin{array}{rcl}
		Z_{g_0}(0,\theta)=\begin{pmatrix}
			1 & 0 & 0 & 0 & 0 \\
			0 & 0 & 1 & 0 & 0 \\
			0 & 0 & 0 & 1 & 0
		\end{pmatrix}, 
		Z_{g_1}(0,\theta)=\begin{pmatrix}
			0 & 0 & 0 & 0 & 1 \\
			0 & 0 & 0 & 1 & 0 \\
			0 & 0 & 1 & 0 & 0
		\end{pmatrix}, 
	\end{array} $$
	Then for all $x:=(x_0,x_1,\ldots,x_4)\in \mathbb{C}^5$, we have: $\langle Z_{g_0}(0,\theta)^*Z_{g_0}(0,\theta)x,x\rangle =\vert x_0\vert^2+\vert x_2\vert^2+\vert x_3\vert^2$ and  $\langle Z_{g_1}(0,\theta)^*Z_{g_1}(0,\theta)x,x\rangle =\vert x_2\vert^2+\vert x_3\vert^2+\vert x_4\vert^2.$ Then $\langle Z_{g_0}(0,\theta)^*Z_{g_0}(0,\theta)x,x\rangle +\langle Z_{g_1}(0,\theta)^*Z_{g_1}(0,\theta)x,x\rangle=\vert x_0\vert^2+2\vert x_2\vert^2+2\vert x_3\vert^2+\vert x_4\vert^2.$  Since $\langle \mathcal{K}(0)x,x\rangle=\vert x_0\vert^2+\vert x_2\vert^2+\vert x_3\vert^2+\vert x_4\vert^2$, then we obtain: 
	$$\langle \mathcal{K}(0)x,x\rangle \leq \langle Z_{g_0}(0,\theta)^*Z_{g_0}(0,\theta)x,x\rangle +\langle Z_{g_1}(0,\theta)^*Z_{g_1}(0,\theta)x,x\rangle \leq 2\langle \mathcal{K}(0)x,x\rangle.$$
	Hence, by Theorem \ref{prop3.6}, $\mathcal{G}(\{g_0,g_1\}, 2,3,5)$ is a $2$-window Gabor frame for $\ell^2(\mathbb{S})$ with frame bounds $3$ and $6$.
\end{example}

\medskip

	\section*{Acknowledgments}
	It is my great pleasure to thank the referee for his careful reading of the paper and for several helpful suggestions.
	
	\section*{Ethics declarations}
	
	\subsection*{Availablity of data and materials}
	Not applicable.
	\subsection*{Conflict of interest}
	The author declares that he has no competing interests.
	\subsection*{Fundings}
	Not applicable.
	
	\medskip


\begin{thebibliography}{99}
\bibitem{a} Akinlar, M. A. and Gabardo, J. P. (2008). Oblique duals associated with rational subspace Gabor frames. \textit{Journal of Integral Equations and Applications}, \textbf{20}, 283–309.

\bibitem{b} Auslander, L., Gertner, I. C., and Tolimieri, R. (1991). The discrete Zak transform application to time-frequency analysis and synthesis of nonstationary signals. \textit{IEEE Transactions on Signal Processing}, \textbf{39}, 825–835.

\bibitem{c} Bölcskei, H. and Hlawatsch, F. (1997). Discrete Zak transforms, polyphase transforms and applications. \textit{IEEE Transactions on Signal Processing}, \textbf{45}, 851–866.

\bibitem{d} Casazza, P. G. and Christensen, O. (2001). Weyl-Heisenberg frames for subspaces of \(L^2(\mathbb{R})\). \textit{Proceedings of the American Mathematical Society}, \textbf{129}, 145–154.

\
\bibitem{1} O. Christensen, \textit{An Introduction to Frames and Riesz Bases}, 2nd ed., Birkhäuser, 2016.
\bibitem{2} C. Heil,   \textit{A Discrete Zak Transform}, Proceedings of the IEEE International Conference on Acoustics, Speech, and Signal Processing (ICASSP), 1999, pp. 1465-1468.
\bibitem{3} N. Khachiaa, M. Rossafi and S. Kabbaj, \textit{Multi-window Gabor frames on discrete periodic sets}, arXiv:2407.05495v1 [math.FA] 07 Jul 2024.
\bibitem{4} Y.-Z. Li and Q.-F. Lian, \textit{Gabor systems on discrete periodic sets}, Sci. China Ser. A,52(2009), 1639-1660.
\bibitem{5} Y.-Z. Li and Q.-F. Lian, \textit{Tight Gabor sets on discrete periodic sets}, Acta Appl. Math., 107 (2009), 105-119.
\bibitem{7} Q.-F. Lian, J. Gong and M.-H. You, \textit{Time domain characterization of multiwindow Gabor systems on discrete periodic sets}, Indian J. Pure Appl. Math., 44(1):47-76, February 2013.
\bibitem{8} Q.-F. Lian and Y.-Z. Li, \textit{The duals of Gabor frames on discrete periodic sets}, J. Math. Phys., 50(2009), 013534, 22pp.
\bibitem{9} Morris, J. M. and Lu, Y. H. (1994). Discrete Gabor expansions of discrete-time signals in \(l_2(\mathbb{Z})\) via frame theory. \textit{Signal Processing}, \textbf{40}, 155–181.

\bibitem{10} Orr, R. S. (1993). Derivation of the finite discrete Gabor transform by periodization and sampling. \textit{Signal Processing}, \textbf{34}, 85–97.

\bibitem{11} Søndergård, P. L. (2007). Gabor frame by sampling and periodization. \textit{Advances in Computational Mathematics}, \textbf{27}, 355–373.

\bibitem{12} Søndergård, P. L. (2007). \textit{Finite discrete Gabor analysis}. PhD Dissertation, Technical University of Denmark, Kongens Lyngby.

\bibitem{13} Wexler, J. and Raz, S. (1990). Discrete Gabor expansions. \textit{Signal Processing}, \textbf{21}, 207–221.

\bibitem{14} Young, R. M. (1980). \textit{An Introduction to Nonharmonic Fourier Series}. New York: Academic Press.
		
		
		
	\end{thebibliography}
\end{document}